\numberwithin{equation}{section} 
\newcommand{\TV}{{\mathrm{TV}_0^1}}
\newcommand{\TVchi}[1]{\mathrm{TV}_{\!\chi^N\!#1}}
\newcommand*\rmd{\mathop{}\!\mathrm{d}}
\newcommand{\Str}{\mathrm{Str}}
\newcommand{\mbr}{\mathbb{R}}
\newcommand{\mbs}{\mathbb{S}}
\newcommand{\mbn}{\mathbb{N}}
\newcommand{\pt}{\partial_t}
\newcommand{\px}{\partial_x}
\newcommand{\emptline}{$\phantom{A}$}
\newcommand{\dm}{\diamondsuit}
\newcommand{\dmh}{\dm^N_{1,m,n}}
\newcommand{\dml}{\dm^N_{1,m,n,L}}
\newcommand{\dmr}{\dm^N_{1,m,n,R}}
\newcommand{\alphah}{\alpha^N_{1,m,n}}
\newcommand{\alphal}{\alpha^N_{1,m,n,L}}
\newcommand{\alphar}{\alpha^N_{1,m,n,R}}
\newcommand{\betah}{\beta^N_{1,m,n}}
\newcommand{\gammah}{\gamma^N_{1,m,n}}
\newcommand{\ts}{{\sigma}}
\newcommand{\tme}{\widetilde{M}_E}
\newcommand{\vth}{\vartheta}
\newcommand{\hts}[1]{\hat{\sigma}_{#1}^N}
\newcommand{\dt}{\Delta t^N\!}
\newcommand{\dx}{\Delta x^N\!}
\newcommand{\TVm}{{\mathrm{TV}_{(m-1)\dx}^{(m+1)\dx}}}
\newcommand{\LR}{{\substack{L\\R}}}
\newcommand{\me}{M_E}
\newcommand{\ms}[1]{M_S(#1)}
\newcommand{\msn}[1]{M_S^N(#1)}
\newcommand{\mi}{\ms{0}}
\newcommand{\mz}{\ms{T_0}}
\newcommand{\mb}{M_*}
\newcommand{\smpsymb}{\bullet}
\newtheorem{lem}{Lemma}[section]
\newtheorem*{cor}{Corollary}
\newtheorem{prop}[lem]{Proposition}
\newtheorem{thm}{Theorem}
\newtheorem{fact}{Fact}
\newtheorem*{claim}{Claim}
\newtheorem*{rem}{Remark:}
\begin{document}

\title[Weakly Nonlinear Gas Dynamics]{Global Entropy Solutions to \\ Weakly Nonlinear Gas Dynamics}

\author{Peng Qu}
\address{
School of Mathematical Sciences\\ Fudan University\\
Shanghai 200433, China
\newline \indent and
\newline \indent The Institute of Mathematical Sciences\\
The Chinese University of Hong Kong\\
Shatin, NT, Hong Kong}
\email[P. Qu]{pqu@fudan.edu.cn}
\thanks{P. Qu is partially supported by  National Science Foundation of China (No.~11501121), Yang Fan Foundation of Shanghai on Science and Technology (No.~15YF1401100), Hong Kong RGC Earmarked Research Grants CUHK-14305315 and CUHK-4048/13P, Shanghai Key Laboratory for Contemporary Applied Mathematics at Fudan University and an initiative funding of Fudan University.}

\author{Zhouping Xin}
\address{The Institute of Mathematical Sciences\\
The Chinese University of Hong Kong\\
Shatin, NT, Hong Kong}
\email[Z. Xin]{zpxin@cuhk.edu.hk}
\thanks{Z. Xin is partially supported by the Zheng Ge Ru Foundation, Hong Kong RGC Earmarked Research Grants CUHK-14305315 and CUHK-4048/13P, a Focus Area Grant from the Chinese University of Hong Kong, and NSFC/RGC Joint Research Scheme N-CUHK443/14.}

\keywords{Weakly nonlinear gas dynamics; entropy solution; periodic initial datum; global existence.}

\subjclass[2010]{35L65,\ 35Q31,\ 35R09,\ 76N10.}



\begin{abstract}
Entropy weak solutions with bounded periodic initial data are considered for the system of weakly nonlinear gas dynamics.
Through a modified Glimm scheme, 
an approximate solution sequence is constructed, 
and then a priori estimates are provided with the methods of approximate characteristics and approximate conservation laws,
which gives not only the existence and uniqueness but also the uniform total variation bounds for the entropy solutions.
\end{abstract} 

\maketitle


\section{Introduction}\label{intro sec}

Consider the following system of weakly nonlinear gas dynamics.
\begin{equation} \label{1.1_sys}
\left\{ \begin{aligned}
& \pt \sigma_1 + \frac{\alpha}{2} \px (\sigma_1^2) + \frac{\beta}{2} \int_{-1}^{1} \frac{1}{2} \sigma'_2(\frac{x+y}{2}) \sigma_3(y,t) \rmd y = 0, \\
& \pt \sigma_3 - \frac{\alpha}{2} \px (\sigma_3^2) - \frac{\beta}{2} \int_{-1}^{1} \frac{1}{2} \sigma'_2(\frac{x+y}{2}) \sigma_1(y,t) \rmd y = 0, \\
& t=0:\; \sigma_1 = \sigma_{1,0}(x), \, \sigma_3 = \sigma_{3,0}(x),
\end{aligned}\right. 
\end{equation}
where $\sigma_1 = \sigma_1 (x,t)$ and $\sigma_3 = \sigma_3 (x,t)$ are unknown functions, with $\sigma_{1,0}$ and $\sigma_{3,0}$ as given initial states satisfying 
\begin{gather}
\sigma_{1,0}(x+1) = \sigma_{1,0}(x), \quad \sigma_{3,0}(x+1) = \sigma_{3,0}(x), \label{1.2_1}\\
\int_{0}^{1} \sigma_{1,0}(x) \rmd x =0, \quad \int_{0}^{1} \sigma_{3,0}(x) \rmd x = 0. \label{1.2_2}
\end{gather}
While, $\sigma_2 = \sigma_2 (x)$ is any given $W_{\mathrm{loc}}^{1,1}$ function with 
\begin{equation} \label{1.3_sigma2}
\sigma_2(x + \frac{1}{2}) = \sigma_2 (x), 
\footnote{One may refer to \cite{Majda_1984} and \cite{Pego_WNLO__1988}, which show the reason why one needs only to count the even modes of $\sigma_2$.} \quad 
\int_0^{\frac12} |\sigma'_2(x)| \rmd x = \me \leq \infty,
\end{equation}
and $\alpha, \, \beta$ are given positive constants.

This system is first derived by A.~Majda and R.~Rosales in \cite{Majda_1984} from the $1$-dimensional full Euler equations with \emph{periodic} initial data
through the method of weakly nonlinear geometric optics approximation
to study the behavior of the solutions especially for the cases with resonance effects.
Different from the Cauchy problem with initial data of small total variation,
which has a quite complete theory for existence \cite{Glimm_basic_1965}, \cite{smoller_book_1983} and uniqueness \cite{Bressan_book_2000},
most aspects of the Cauchy problem for quasilinear systems of hyperbolic conservation laws with small periodic initial data are still open.
One of the main difficulties may lie on the fact that the periodicity prevent the waves from separation and thus the system does not possess a decreasing Glimm functional to control the nonlinear effects,
and almost all the classical analysis methods fail in this case.
The celebrated work of J.~Glimm--P.D.~Lax \cite{Glimm_Lax_1970} shows that for the isentropic Euler equations the Cauchy problem with small periodic initial data admits a global entropy solution.
The method of their work is to use the cancellations occurring for genuinely nonlinear characteristics when intrafamily shock-rarefaction waves coalesce.
This cancellation is well analyzed with the method of approximate characteristics and approximate conservation laws, 
which, combined with the nature of the system that the isentropic Euler system has a complete set of Riemann invariant coordinates and thus a relatively weak interfamily  nonlinear interaction effects,
provide a $t^{-1}$ decay for the total variation per period of the solution, as well as the global existence.
This result is then further developed and generalized by many works, among them are \cite{Dafermos_asymp_periodic_1995}, \cite{Dafermos_book_2005}, \cite{Bia2010Linfty}. 
On the other hand, for the $1$-dimensional full Euler system, 
there is one more family of characteristics which are linearly degenerate,
and the system generally does not have a complete set of Riemann invariant coordinates.
It is believed that the process that the right sound waves would be affected when the left sound waves interact with the entropy waves and vice vesa would cause the effect of nonlinear resonance for space periodic data, 
and change completely many aspects of the behavior of the solutions such as the time asymptotic.
One may refer to \cite{temple_Young2009time_periodic}, \cite{temple2010liapunov}, \cite{temple2011time}, \cite{temple2015nash} for B.~Temple and R.~Young's ongoing project to construct non-trivial time periodic and thus shock-free solutions.
Meanwhile, the effect of resonance causes huge difficulties in analysis,
and one cannot expect the cancellation effect given in \cite{Glimm_Lax_1970} originally for the isentropic case dominates all the time and the system may not undergo a strong enough decay to guarantee the global existence.
Therefore, the problem of global existence for the solutions to the full Euler system with small periodic initial data is still open.
One may refer to \cite{temple_Young1996large}, \cite{Q_Xin2015} for long time existence of entropy solutions, \cite{Chen_Da_van_vis_1995} for global existence of entropy solution for special systems, and \cite{LeFloch_Xin_class}, \cite{Li_class_1996}, \cite{Xiao_class} for the blowup result of classical solutions.

To get a better understanding of the resonance effects, 
\cite{Majda_1984} performs weakly nonlinear geometric optics approximation for general systems of hyperbolic conservation laws and provides a detailed analysis on the occurrence of the resonance, which shows that the Cauchy problems with initial data of small total variation on $\mathbb{R}$ and the systems with a complete set of Riemann invariant coordinates such as the isetropic Euler system, do not possess resonance.
Meanwhile, as one of the main objects of \cite{Majda_1984}, the full Euler system with small periodic initial data gives the system of weakly nonlinear gas dynamics \eqref{1.1_sys},
which does show the resonance feature of the full Euler system through the nonlocal interaction terms.
Later, in P.L.~Pego's remarkable work \cite{Pego_WNLO__1988}, for the system \eqref{1.1_sys} with periodic initial data,
a series of non-trivial time periodic solutions are constructed,
which makes it clear that system \eqref{1.1_sys} do possess strong resonance and its solutions have complicated behavior.
On the other hand, for general systems of weakly nonlinear geometric optics, under the assumption of genuine nonlinearity and a structure requirement on interactions, \cite{cheverry1996modulation} proves global existence of entropy solutions.
Unfortunately, the most important resonant case, namely the system \eqref{1.1_sys} of weakly nonlinear gas dynamics, is not included due to the linear degeneracy of the entropy wave.
See also \cite{MRS1988canonical} for detailed analysis on the behavior of the solutions.
One may also refer to \cite{Diperna1985validity}, \cite{joly1993resonant}, \cite{schochet1994resonant}, \cite{cheverry1997justification}, \cite{chen2013weakly} and the references therein for the justification of the weakly nonlinear geometric optics approximation, and \cite{hunter1986resonantly}, \cite{chen2006validity} for the related results of the multidimensional case.


Since the $C^1$ classical solutions to \eqref{1.1_sys} would generally blow up in finite time (see Appendix A),
it is natural to look for the entropy weak solution in this paper.
The main purpose of this paper is to get the global in time entropy weak solutions for any periodic initial data with bounded total variation over each period
\begin{equation} \label{1.4_ini}
\TV \sigma_{1,0} + \TV \sigma_{3,0} = \mi < +\infty,
\end{equation}
and obtain some uniform a priori estimates for the solutions,
where $\TV f$ denotes the total variation of a spatially periodic function $f$, with period $1$, over each of its period.
Here a global entropy solution means a solution in the sense of distribution
\begin{multline} \label{1.5}
\int_{0}^{+\infty} \int_0^1 \Big( \sigma_1 \pt \varphi_1 + \frac{\alpha}{2} \sigma_1^2 \px \varphi_1 - \frac{\beta}{2} \varphi_1 \Big( \int_{-1}^{1} \frac{1}{2} \sigma'_2(\frac{x+y}{2}) \sigma_3(y,t) \rmd y \Big) \\
+ \sigma_3 \pt \varphi_3 - \frac{\alpha}{2} \sigma_3^2 \px \varphi_3 + \frac{\beta}{2} \varphi_3 \Big( \int_{-1}^{1} \frac{1}{2} \sigma'_2(\frac{x+y}{2}) \sigma_1(y,t) \rmd y \Big) \Big) \rmd x \rmd t \\
+ \int_{0}^1 \Big( \sigma_{1,0}(x) \varphi_1(x,0) + \sigma_{3,0}(x) \varphi_3(x,0) \Big) \rmd x = 0, \qquad \forall\, \varphi_1, \varphi_3 \in C_0^1 (\mbs^1 \times \overline{\mbr^+}),
\end{multline}
which is further requested to satisfy the entropy condition that for any convex entropy function $\eta = \eta(\sigma_1, \sigma_3)$ with entropy flux function $ \psi = \psi(\sigma_1,\sigma_3)  $ satisfying 
\begin{equation}
\left\{ \begin{aligned}
\alpha \sigma_1 \partial_{\sigma_1} \eta = \partial_{\sigma_1} \psi, \\
- \alpha \sigma_3 \partial_{\sigma_3} \eta = \partial_{\sigma_3} \psi, 
\end{aligned} \right.
\end{equation}
it holds that
\begin{multline} \label{1.8_entropy_eq}
\int_{0}^{+\infty} \int_0^1 \bigg(  \eta \pt \varphi + \psi \px \varphi \\
- \frac{\beta}{2} \varphi \Big( \partial_{\sigma_1} \eta \int_{-1}^{1} \frac{1}{2} \sigma'_2(\frac{x+y}{2}) \sigma_3(y,t) \rmd y - \partial_{\sigma_3} \eta \int_{-1}^{1} \frac{1}{2} \sigma'_2(\frac{x+y}{2}) \sigma_1(y,t) \rmd y \Big) \bigg) \rmd x \rmd t\\
+ \int_{0}^{1} \eta(\sigma_{1,0},\sigma_{3,0})(x) \varphi(x,0) \rmd x \geq 0, \qquad \forall\, \varphi \in C_0^1 (\mbs^1 \times \overline{\mbr^+}).
\end{multline}

The main result in this paper is 
\begin{thm} \label{thm:main}
For any given initial data $\sigma_{1,0}$ and $\sigma_{3,0}$ satisfying \eqref{1.2_1}--\eqref{1.2_2} and \eqref{1.4_ini}, the Cauchy problem \eqref{1.1_sys} with \eqref{1.3_sigma2} admits a global entropy weak solution $\sigma = (\sigma_1,\sigma_3)^T(x,t)$, which satisfies further
\begin{equation}
	\ms{t} \overset{\mathrm{def.}}{=} \TV \sigma_1(\cdot,t) + \TV \sigma_3(\cdot,t) \leq \mb, \quad \mathrm{a.e.} \; t \in \mbr^+,
\end{equation}
where
\begin{equation} \label{1.10_mb}
\mb = \max \left\{ \frac{5}{4} \mi,  300 \frac{\beta}{\alpha} \me \right\}.\footnote{Here the number $300$ is far from being sharp and can be changed in to ``large enough constants''. It is specificaly given here, to simplyfy the naration in what follows.}
\end{equation}
Moreover, for each $T>0$ this solution is unique in the class of periodic $C(0,T;L^1)$ entropy solutions.
\end{thm}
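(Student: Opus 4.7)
The plan is to adapt the Glimm--Lax treatment of periodic scalar conservation laws \cite{Glimm_Lax_1970} to \eqref{1.1_sys}. The principal part decouples into two inviscid Burgers equations traveling in opposite directions, coupled only through the nonlocal integral source involving the derivative of the given function $\sigma_2$. This structural observation motivates a fractional-step modification of the Glimm scheme: on each time strip of height $\dt$, first solve the two scalar Riemann problems independently in every mesh cell via random sampling, and then apply a short Euler update for the nonlocal source. The scheme should be designed so that the zero-mean conditions $\int_0^1 \sigma_i^N(x,t)\,dx = 0$ are preserved; this is consistent, since the $\tfrac12$-periodicity of $\sigma_2$ in \eqref{1.3_sigma2} implies $\int_0^1 \sigma_2'((x+y)/2)\,dx = 0$, and the zero-mean property is what makes the periodic Burgers decay available.

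\textbf{A priori TV estimate.} The heart of the proof is the bound $\ms{t} \le \mb$ for the approximate solutions, via approximate characteristics and approximate conservation laws. Two mechanisms compete per time step. In the Burgers step, intrafamily shock--rarefaction coalescence (the Glimm--Lax cancellation) produces TV decay at a rate of order $\alpha\,\ms{t}^2$, reflecting the quadratic nonlinearity. In the source step, the added function has spatial total variation bounded by $\tfrac{1}{2}\beta\me\,\|\sigma(\cdot,t)\|_{L^1} \lesssim \beta\me\,\ms{t}$ (viewing the source as a convolution against $\sigma_2'$ and using the zero-mean property), so TV grows at rate $\lesssim \beta\me\,\ms{t}$. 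Balancing these gives the heuristic differential inequality $\tfrac{d}{dt}\ms{t} \lesssim -c\alpha\,\ms{t}^2 + C\beta\me\,\ms{t}$, whose positive equilibrium $\ms{t} \sim \beta\me/\alpha$ matches the second term in \eqref{1.10_mb}, while the factor $\tfrac54$ on $\mi$ absorbs short-time growth before Burgers cancellation fully engages. Making this rigorous requires a carefully designed Glimm-type functional, combining a signed count of shock and rarefaction strengths over the period with a source-driven correction, and showing that it is quasi-monotone along the scheme.

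\textbf{Limit passage and uniqueness.} Given the uniform TV bound, Helly's selection theorem extracts a subsequence $\sigma^N \to \sigma$ a.e., with $\sigma$ of bounded variation over each period. The nonlocal source is continuous with respect to $L^1$ convergence, so passing to the limit in \eqref{1.5} and the entropy inequality \eqref{1.8_entropy_eq} reduces to the standard Glimm consistency argument adapted to the fractional-step scheme. For uniqueness in $C(0,T;L^1)$, the decoupled scalar structure of the principal part permits a Kruzhkov doubling-of-variables estimate applied to each component; the nonlocal source, being linear and bounded from $L^1$ to $L^1$, is absorbed by a Gronwall argument.

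\textbf{Expected main obstacle.} I expect the hardest step to be the rigorous justification of the TV balance. The source can continuously inject fresh monotone pieces of either sign into $\sigma_1$ and $\sigma_3$, which may disrupt the shock/rarefaction structure on which the Glimm--Lax cancellation relies. The modified functional must correctly identify these injected pieces and guarantee that the next Burgers step produces the compensating cancellation, ensuring that the region $\{\ms{t} \le \mb\}$ is actually invariant rather than merely attracting. This is likely the point at which the precise form of the approximate conservation laws plays the decisive role.
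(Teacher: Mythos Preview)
Your overall strategy matches the paper's: fractional-step Glimm scheme with random sampling for the decoupled Burgers parts plus an Euler source update, Glimm--Lax approximate characteristics and conservation laws for the TV control, Helly compactness for the limit, and Kruzhkov doubling for uniqueness. The heuristic balance $-c\alpha\ms{t}^2 + C\beta\me\ms{t}$ is exactly the intuition the paper articulates.

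The one place where your plan diverges from the paper is the mechanism for making the TV balance rigorous. You propose a single ``carefully designed Glimm-type functional'' that is quasi-monotone along the scheme. The paper does not do this; indeed, it remarks that periodicity prevents the existence of a decreasing Glimm functional. Instead the paper separates the two effects entirely. A growth proposition shows, by a direct elementary estimate on the source step, that $\msn{t}\le\msn{T_0}\exp(\beta\me(t-T_0))$. A decay proposition then shows that whenever $\msn{T_0}$ lies in a fixed band near $\tfrac45\mb$, after a fixed time $T_*=60/(\alpha\mb)$ one has $\ms{T_0+T_*}<\tfrac{239}{300}\mb$ while $\ms{t}<\mb$ throughout $[T_0,T_0+T_*]$. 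Iterating these two statements gives the invariant region $\ms{t}\le\mb$ without ever producing a monotone functional. The decay step itself is the Glimm--Lax widening-of-rarefaction-waves argument: one bounds $X_1^+$ on a time-$T_*$ slice by $1/(\alpha T_*)$ plus source and shock errors, by tracking the spread $D(t)=\chi_2(t)-\chi_1(t)$ between two $1$-characteristics; crucially, this estimate is carried out after first passing to a subsequential limit (so the characteristics are those of the exact solution, constructed via Helly-type lemmas for $\chi^N$, $\dot\chi^N$, and one-sided traces). Your functional approach might be made to work, but it is not what the paper does, and the paper's route sidesteps precisely the obstacle you flag---the source injecting fresh monotone pieces---by absorbing all source contributions into the coarse term $\Delta_1$ over the fixed window $T_*$ rather than tracking them step by step.

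A minor correction: the scheme does not preserve $\int_0^1\sigma_i^N\,dx=0$ exactly (random sampling destroys means); the paper only secures $|\int_0^1\sigma_i^N\,dx|\le 1$ along a subsequence, and recovers zero mean in the limit.
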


The main idea to prove this result can be summarized as follows.
First, although, some adaptions on an approximate scheme should be made to deal with the difficulty caused by their nonlocal property,
the interaction terms in the system \eqref{1.1_sys} are linear,
since the entropy wave $\sigma_2$ is a given function in this model.
Thus, it is expected that in the worst scenario it can only cause an exponential increase for some suitable norm of the solution,
namely, the time span required for the solution to double its norm  can be bounded by $\sigma_2$ and the parameters $\alpha, \beta$,
which is independent of the solution itself.
On the other hand, the quasilinear leading terms in the system \eqref{1.1_sys} are just two decoupled inviscid Burgers equations, 
for which one cannot expect anything better to apply the methods in \cite{Glimm_Lax_1970} to get a decay, 
with the property that the solution would undergo a faster decay and require less time to halve its norm once its norm is bigger.
Therefore, to combine these two effects together, the decay effect would dominate once the sound waves $(\sigma_1,\sigma_3)^T$ are relatively stronger than the entropy wave $\sigma_2$,
which can provide the desired uniform a priori bounds for the solution.

\begin{rem}
Using the above intuition that the cancellation effects can dominate an exponential growth of the solution,
one can apply a similar procedure as this paper to show the global existence and the uniform a priori estimates for the entropy solutions to the system
\begin{equation*}
\left\{ \begin{aligned}
& \pt \sigma_1 + \frac{\alpha}{2} \px (\sigma_1^2) + \frac{\beta}{2} \int_{-1}^{1} \frac{1}{2} \sigma'_2(\frac{x+y}{2}) \sigma_3(y,t) \rmd y = B_{11} \ts_1 + B_{13} \ts_3, \\
& \pt \sigma_3 - \frac{\alpha}{2} \px (\sigma_3^2) - \frac{\beta}{2} \int_{-1}^{1} \frac{1}{2} \sigma'_2(\frac{x+y}{2}) \sigma_1(y,t) \rmd y = B_{31} \ts_1 + B_{33} \ts_3, \end{aligned}\right. 
\end{equation*}
where $B = \begin{pmatrix} B_{11} & B_{13} \\ B_{31} & B_{33} \end{pmatrix}$ is any given constant matrix, which can be chosen to model the damping or rotation effects to the system.
\end{rem}

This paper is arranged as follows.
In Section 2, the approximate scheme is introduced and the corresponding consistence result is proved under some a priori assumptions on the uniform bounds.
In Section 3, the increase of the total variation is estimated for the approximate solutions.
Then in Section 4, the methods of approximate characteristics and approximate conservation laws are applied to get the decay property for the solutions and complete the proof of Theorem \ref{thm:main}.
Appendix A is devoted to the proof of the finite time blowup for the classical solutions, and Appendices B--C provide some details of the proof.

\section{An Approximate Scheme}

In order to construct a sequence of approximate solutions, 
one may adapt J.~Glimm's celebrated approximate scheme originally derived in \cite{Glimm_basic_1965}.
Moreover, one may use the fractional step methods, such as the one developed in \cite{Dafermos_Hsiao} for hyperbolic balance laws,
and modify it to deal with the nonlocal interaction terms in the system \eqref{1.1_sys}.
 
For each $N \gg 1$, set $\dx = \frac{1}{2^N}$ as the spatial mesh length, and $\dt = \Lambda^{-1} \dx$ as the corresponding time mesh length.
Here $\Lambda$ is set to satisfy
the Courant--Friedrichs--Lewy (C.F.L. for short) condition
\[
\Lambda > \alpha \max \{\|\ts_1^N\|_{L^\infty},\|\ts_3^N\|_{L^\infty}\}.
\]
In fact, for the system \eqref{1.1_sys}, it can be verified that if one chooses
\begin{equation} \label{2.6_Lam}
\Lambda > 2 \alpha (\mb + 2)
\end{equation}
a priori, then for large enough $N$, the C.F.L. condition holds.

Next, let $\vartheta = \{\vth_n\}_{n=0}^\infty$ be a sequence of independent random variables, which is equidistributed over $[-1,1)$.
Then one can set 
\[
\ts_1^N(x,0-) = \sigma_{1,0}(x), \quad \ts_3^N(x,0-) = \sigma_{3,0}(x)
\]
to initiate the construction.

Inductively, if the approximate solution $(\ts_1^N, \ts_3^N)^T$ has been constructed for $t < n \dt\, (n \in \mbn)$, one may use the random sampling
\begin{equation} \label{2.7_hts}
\begin{aligned}
\hts{1,m,n} & = \ts_1^N((m+ \vth_n) \dx,n \dt-), \\
\hts{3,m,n} & = \ts_3^N((m+ \vth_n) \dx,n \dt-),
\end{aligned} \qquad \text{for} \; (m+n) \; \text{odd},
\end{equation}
to get the corresponding piece-wise constant function
\begin{equation*}
\begin{aligned}
\hts{1,n}(x) = \hts{1,m,n}, \qquad \forall\, x \in [(m-1)\dx,(m+1) \dx),\\
\hts{3,n}(x) = \hts{3,m,n}, \qquad \forall\, x \in [(m-1)\dx,(m+1) \dx),
\end{aligned} \qquad \text{for}\; (m+n) \; \text{odd.}
\end{equation*}
Then set
\begin{equation} \label{2.8_g}
\begin{aligned}
g_{1,m,n}^N & = \int_{-1}^{1} K(m\dx+y) \hts{3,n}(y) \rmd y = \sum_{\substack{-2^N < \tilde{m}\leq 2^N \\ \tilde{m} + n \text{ odd}}}^{} K_{m+\tilde{m}}^N \hts{3,\tilde{m},n},\\
g_{3,m,n}^N & = - \int_{-1}^{1} K(m\dx+y) \hts{1,n}(y) \rmd y = - \sum_{\substack{-2^N < \tilde{m}\leq 2^N \\ \tilde{m} + n \text{ odd}}}^{} K_{m+\tilde{m}}^N \hts{1,\tilde{m},n},
\end{aligned} \qquad \text{for} \; (m+n) \text{odd,}
\end{equation}
where
\begin{equation} 
K(x)  = \frac{\beta}{4} \sigma'_2(\frac{x}{2}) \label{2.3_rs2} 
\end{equation}
with properties
\begin{equation} \label{2.5_K}
K(x + 1) = K(x), \quad \int_{0}^{1} K(y) \rmd y = 0, \quad \| K(x)\|_{L^1[0,1]} = \frac{\beta}{4} \me 
\end{equation}
and
\begin{equation} 
K_m^N = \int_{-\dx}^{\dx} K(m\dx+y) \rmd y
\end{equation}
with properties
\begin{equation} \label{2.8_kmn_pro}
K_{m+2^N}^N = K_m^N, \quad \sum_{\substack{1 \leq m \leq 2^N \\ m+n \text{ odd}}}^{} K_m^N = 0, \quad  \sum_{\substack{1 \leq m \leq 2^N \\ m+n \text{ odd}}}^{} | K_m^N | = \frac{\beta}{4} \me.
\end{equation}
And one can define
\begin{equation} \label{2.9_tso}
\begin{aligned}
\ts_{1,m,n}^N & = \hts{1,m,n} - g_{1,m,n}^N \dt,\\
\ts_{3,m,n}^N & = \hts{3,m,n} - g_{3,m,n}^N \dt,
\end{aligned} \qquad \text{for} \; (m+n) \; \text{odd.}
\end{equation}
Then, one can solve finitely many Riemann problems on each period for two decoupled Burgers equations with $(m \dx, n \dt)$ ($(m+n)$ even) as the centers:
\begin{equation} \label{2.10_Burger}
\left\{
\begin{aligned}
& \pt \ts_1^N + \frac{\alpha}{2} \px(\ts_1^N)^2 = 0, \\
& \pt \ts_3^N - \frac{\alpha}{2} \px(\ts_3^N)^2 = 0, \\
& t=n \dt: \; \ts_1^N = \ts_{1,m,n}^N, \, \ts_3^N = \ts_{3,m,n}^N, \quad \text{for} \; x \in [(m-1)\dx, (m+1) \dx),\; (m+n) \; \text{odd.}
\end{aligned}
\right.
\end{equation}
If the C.F.L. condition holds, no wave interacts for $t \in [n \dt, (n+1) \dt)$, and one may use these Riemann solvers $(\ts_1^N,\ts_3^N)^T$ as the approximate solution on $t \in [n \dt, (n+1) \dt)$ (See Figure \ref{fig:1_scheme}).
Then one can repeat this procedure on $t \in [(n+1)\dt,(n+2)\dt)$.

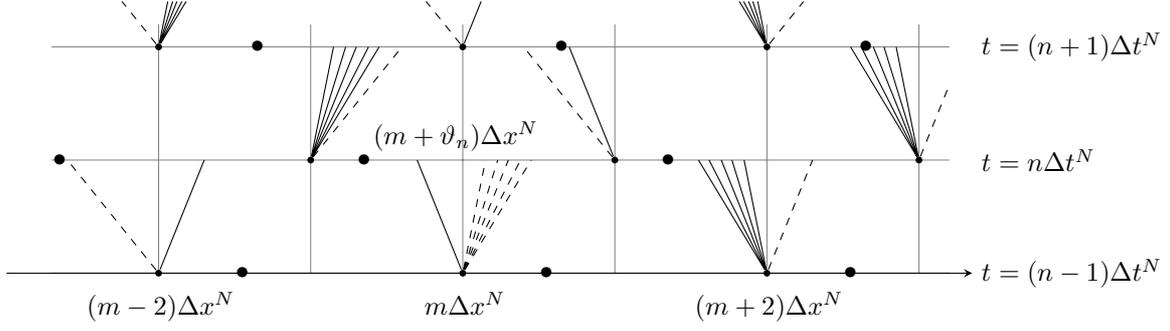
\begin{figure}
\centering
\begin{tikzpicture}[scale=1,>=stealth]
\small 

\def\xspace{2}
\def\yspace{1.5}
\def\level{2}
\def\waveno{3}

\begin{scope}
\clip (0.6,-0.4) rectangle (\waveno*\xspace*2+0.4,\level*\yspace+0.6);

\draw[help lines, xstep=\xspace ,ystep=\yspace ] (0,0) grid (\waveno*\xspace*2+0.4,\level*\yspace+0.3);

\foreach \x in {1, ..., \waveno}
{
	\node[fill,inner sep =0.9pt, shape = circle] (O1\x) at  (\x*2*\xspace-\xspace,0) {};
	\node[fill,inner sep =0.9pt, shape = circle] (O2\x) at  (\x*2*\xspace,\yspace) {};
	\node[fill,inner sep =0.9pt, shape = circle] (O3\x) at  (\x*2*\xspace-\xspace,2*\yspace) {};
}

\def\smpone{0.55} 
\def\smptwo{0.35}
\def\smpthree{0.65}
\foreach \x in {1, ..., \waveno}
{
	\node (Oth1\x) at  (\x*2*\xspace-\xspace+\smpone*\xspace,0) {$\smpsymb$};
	\node (Oth2\x) at  (\x*2*\xspace-2*\xspace+\smptwo*\xspace,\yspace) {$\smpsymb$};
	\node (Oth3\x) at  (\x*2*\xspace-\xspace+\smpthree*\xspace,2*\yspace) {$\smpsymb$};
}

\coordinate[label=above right:{$(m+\vth_n) \dx$}] (N1) at (2*\xspace+\smptwo*\xspace,\yspace);

\node (O21l) at (O21) {};
\node (O22l) at (O22) {};
\node (O23l) at (O23) {};

\draw[dashed] (O11) -- +(-0.6*\xspace,\yspace); \draw (O11) -- +(0.3*\xspace,\yspace);
\draw (O12) -- +(-0.3*\xspace,\yspace); \draw[dashed] (O12) -- +(0.3*\xspace,\yspace) (O12) -- +(0.15*\xspace,\yspace) (O12) -- +(0.23*\xspace,\yspace) (O12) -- +(0.38*\xspace,\yspace) (O12) -- +(0.45*\xspace,\yspace);
\draw (O13) -- +(-0.3*\xspace,\yspace) (O13) -- +(-0.15*\xspace,\yspace) (O13) -- +(-0.23*\xspace,\yspace) (O13) -- +(-0.38*\xspace,\yspace) (O13) -- +(-0.45*\xspace,\yspace); \draw[dashed] (O13) -- +(0.3*\xspace,\yspace);

\draw[dashed] (O22) -- +(-0.6*\xspace,\yspace); \draw (O22) -- +(-0.3*\xspace,\yspace);
\draw[dashed] (O21) -- +(0.6*\xspace,\yspace); \draw (O21) -- +(0.3*\xspace,\yspace) (O21) -- +(0.15*\xspace,\yspace) (O21) -- +(0.23*\xspace,\yspace) (O21) -- +(0.38*\xspace,\yspace) (O21) -- +(0.45*\xspace,\yspace);
\draw (O23) -- +(-0.3*\xspace,\yspace) (O23) -- +(-0.15*\xspace,\yspace) (O23) -- +(-0.23*\xspace,\yspace) (O23) -- +(-0.38*\xspace,\yspace) (O23) -- +(-0.45*\xspace,\yspace); \draw[dashed] (O23) -- +(0.3*\xspace,\yspace);

\draw[dashed] (O32) -- +(-0.6*\xspace,\yspace); \draw (O32) -- +(0.3*\xspace,\yspace);
\draw[dashed] (O31) -- +(-0.6*\xspace,\yspace); \draw (O31) -- +(0.3*\xspace,\yspace) (O31) -- +(0.15*\xspace,\yspace) (O31) -- +(0.23*\xspace,\yspace) (O31) -- +(0.38*\xspace,\yspace) (O31) -- +(0.45*\xspace,\yspace);
\draw (O33) -- +(-0.3*\xspace,\yspace) (O33) -- +(-0.15*\xspace,\yspace) (O33) -- +(-0.23*\xspace,\yspace) (O33) -- +(-0.38*\xspace,\yspace) (O33) -- +(-0.45*\xspace,\yspace); \draw[dashed] (O33) -- +(0.6*\xspace,\yspace);

\end{scope}

\draw[->] (0,0) -- (\waveno*\xspace*2+0.7,0) node[pos=1,right] {$t=(n-1)\dt$};
\path (0,\yspace) -- (\waveno*\xspace*2+0.7,\yspace) node[pos=1,right] {$t=n \dt$};
\path (0,2*\yspace) -- (\waveno*\xspace*2+0.7,2*\yspace) node[pos=1,right] {$t=(n+1) \dt$};

\node[label=below:{$(m-2) \dx$}] (O11l) at (O11) {};
\node[label=below:{$m \dx$}] (O12l) at (O12) {};
\node[label=below:{$(m+2) \dx$}] (O13l) at (O13) {};

\end{tikzpicture}
\caption{Modification of Glimm's random choice scheme}
\label{fig:1_scheme}
\end{figure}

Apparently, the approximate solutions constructed above are spatially periodic 
\begin{equation}
\ts_1^N(x+1,t) = \ts_1^N(x,t), \qquad \ts_3^N(x+1,t) = \ts_3^N(x,t).
\end{equation}
Since $\ts_i^N(x,t) \, (i=1,3)$ are piece-wise smooth, 
one may define its value to be the up right limit on the discontinuous points.

Now, the consistency of this scheme can be shown in the following
\begin{prop}\label{prop:cons}
Assume that on $(x,t) \in [0,1) \times [0,T]$, there exists a sequence of approximate solutions $(\ts_1^N, \ts_3^N)^T$ satisfying
\begin{gather}
\| \ts_1^N \|_{L^\infty} + \| \ts_3^N \|_{L^\infty} \leq C_1 < \frac{\Lambda}{\alpha}, \label{2.11_asmp_linf}\\
\ms{t} = \sup_N \msn{t} = \sup_N \{ \TV \ts_1^N(\cdot,t) + \TV \ts_3^N(\cdot,t) \} \leq C_2, \label{2.12_asmp_tv}\\
\| \ts_1^N (\cdot,t_1) - \ts_1^N (\cdot,t_2) \|_{L^1[0,1)} + \| \ts_3^N (\cdot,t_1) - \ts_3^N (\cdot,t_2) \|_{L^1[0,1)} \leq C_3 (|t_2 - t_1| + 2 \dt). \label{2.13_asmp_l1dis}
\end{gather}
Then there exists a subsequence\footnote{ For notation simplicity, we always use $(\ts_1^N, \ts_3^N)^T$ to denote the sequence of approximate solutions as well as its subsequence.}, such that for almost all choice of $\vth$, it holds
\[
(\ts_1^N, \ts_3^N)^T  \overset{L^1}{\longrightarrow} (\ts_1,\ts_3)^T,
\]
where $(\ts_1,\ts_3)^T$ is an entropy weak solution to the Cauchy problem \eqref{1.1_sys} which is periodic with zero means, i.e.,
\begin{gather}
\ts_1(x+1,t) = \ts_1(x,t), \quad \ts_3(x+1,t) = \ts_3(x,t), \\
\int_{0}^{1} \ts_1(x,t) \rmd x = \int_{0}^{1} \ts_3(x,t) \rmd x = 0.
\end{gather}
\end{prop}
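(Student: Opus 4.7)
The plan is to follow the classical Glimm--Lax--Liu consistency template, adapted to accommodate the nonlocal fractional step introduced in \eqref{2.8_g}--\eqref{2.9_tso}. First I would establish compactness: the bounds \eqref{2.11_asmp_linf}--\eqref{2.13_asmp_l1dis} supply uniform $L^\infty$, uniform spatial total variation, and $L^1$-Lipschitz continuity in $t$ modulo $O(\dt)$, so a Helly-type diagonal extraction yields a subsequence converging in $L^1([0,1)\times[0,T])$ to a limit $(\ts_1,\ts_3)^T$ inheriting periodicity, the TV bound, and time-Lipschitz continuity. The zero-mean property passes to the limit because the Burgers step \eqref{2.10_Burger} is conservative while the fractional-step correction satisfies $\int_0^1 g_{i,\cdot,n}^N\,dx = 0$ by the cancellation $\sum K_m^N = 0$ from \eqref{2.8_kmn_pro}.

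Next I would identify the limit as a weak solution. Fix $\varphi_1, \varphi_3 \in C_0^1(\mbs^1 \times \overline{\mbr^+})$ and substitute $(\ts_1^N, \ts_3^N)$ into the bulk integrand of \eqref{1.5}. On each strip $[n\dt, (n+1)\dt)$ the approximation solves \eqref{2.10_Burger} exactly, so integration by parts leaves the initial term plus jumps at the time levels $t = n\dt$. Each jump decomposes as
\[
\ts_{i,m,n}^N - \ts_i^N(x, n\dt-) \;=\; \bigl(\hts{i,m,n} - \ts_i^N(x,n\dt-)\bigr) + \bigl(\ts_{i,m,n}^N - \hts{i,m,n}\bigr),
\]
where the second summand equals $-g_{i,m,n}^N \dt$. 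Tested against $\varphi_i$ and summed in $m$ and $n$, this second piece is a space--time Riemann sum for the continuous convolution; since $K_m^N$ is the cell integral of $K$, $\hts{i,n} \to \ts_i$ in $L^1$, and the $L^\infty$ bound \eqref{2.11_asmp_linf} provides domination, the sum converges to the nonlocal terms of \eqref{1.5}. The first (sampling) summand is controlled by the Glimm--Liu equidistribution argument: viewed as a real random variable on $\prod[-1,1)$, the aggregate error has variance $O(\dx)$ times the TV summed over $n$, so a Borel--Cantelli argument along a subsequence $N_k \to \infty$ produces almost-sure vanishing against a countable dense family of test functions. The deterministic fractional step does not interfere since it is measurable in $\ts_i^N(\cdot, n\dt-)$.

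The entropy inequality \eqref{1.8_entropy_eq} follows from the same decomposition applied to a convex $\eta$ with compatible flux $\psi$: the Burgers Riemann solvers dissipate $\eta$ within each strip, the sampling jump vanishes almost surely as above, and the fractional-step contribution, via the first-order Taylor expansion $\eta(\hts{i,m,n} - g\dt) - \eta(\hts{i,m,n}) = -\partial_{\sigma_i}\eta \cdot g \dt + R$ with $R \geq 0$ by convexity of $\eta$, converts in the limit to the nonlocal entropy source in \eqref{1.8_entropy_eq} (the favorable sign of $R$ preserves the inequality). The main obstacle, absent in the classical Glimm--Lax setting, is the rigorous passage to the limit in the nonlocal fractional step itself: a discrete convolution of cell integrals of the $L^1$ kernel $K$ against a BV sampled function must be passed simultaneously into a continuous convolution and into an outer time--space Riemann sum. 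The $L^\infty$ bound, the TV bound, and the $L^1$ convergence together supply the required domination, but some care is needed because $K$ is only $L^1$ rather than continuous; all other ingredients are standard adaptations of the Glimm--Liu scheme.
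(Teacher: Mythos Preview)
Your proposal is essentially correct and follows the same template as the paper: Helly compactness, then the Glimm decomposition of the time-level jump into a sampling error (killed a.s.\ by the random-choice variance argument) plus the fractional-step correction $-g^N_{i,m,n}\dt$, which after summation in $m,n$ is identified with the nonlocal source. Your identification of the delicate point---passing the discrete convolution against the merely-$L^1$ kernel $K$ to its continuous counterpart---is exactly where the paper spends its effort, splitting that error into four pieces (time continuity via \eqref{2.13_asmp_l1dis}, a second sampling error, the piecewise-constant approximation of $K$, and continuity of $\varphi$).

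There is, however, a genuine gap in your zero-mean argument. You write that the mean is preserved because the Burgers step is conservative and the fractional step has zero integral by $\sum_m K_m^N = 0$. But you have omitted the random sampling step $\ts_i^N(\cdot,n\dt-)\mapsto \hts{i,n}$: replacing a BV function by its sampled piecewise-constant version does \emph{not} preserve $\int_0^1$, so the approximate solutions $(\ts_1^N,\ts_3^N)$ need not have zero mean at all (indeed the paper only claims $|\int \ts_i^N|\le 1$ in \eqref{2.17_mildmean}, after selecting a subsequence). The paper instead proves zero mean of the \emph{limit} directly from the weak formulation \eqref{1.5}: take $\varphi_1^k\to\chi_{[0,1)\times[0,t]}$, $\varphi_3=0$, and use periodicity of $\sigma_1^2$ plus $\int_0^1 K(x+y)\,dx=0$ to kill the flux and the nonlocal term. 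This is a minor repair, but your argument as written does not close.

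A smaller remark on the entropy step: your observation that the Taylor remainder $R\ge 0$ by convexity is correct but unnecessary here, since $R=O((\dt)^2)$ and there are $O(1/\dt)$ time levels, so $\sum_n R\to 0$ regardless of sign. The paper simply invokes ``a similar argument as above'' and does not isolate the sign.
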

\begin{proof}
The convergence is a direct application of Helly's selection principle.
Thus, it suffices to prove the consistency.

Since $\ts_1^N$ and $\ts_3^N$ are local Riemann solvers to \eqref{2.10_Burger} on each time interval $[n \dt,(n+1)\dt )$, they satisfy \eqref{2.10_Burger} piece-wisely and the Rankine-Hugoniot condition holds on each shock discontinuity.
By multiplying  the test functions $\varphi_1$ and $\varphi_3$ to \eqref{2.10_Burger} and integrating by parts, one can get
\begin{align*}
0 = & \int_{0}^{T} \int_{0}^{1} \Big( - \ts_1^N \pt \varphi_1 - \frac{\alpha}{2} (\ts_1^N)^2 \px \varphi_1 \Big) \rmd x \rmd t + \int_0^T \int_{0}^{1} \Big( - \ts_3^N \pt \varphi_3 + \frac{\alpha}{2} (\ts_3^N)^2 \px \varphi_3 \Big) \rmd x \rmd t \\
& +\sum_{n=0}^{\infty} \int_{0}^{1} \Big( - \ts_1^N (x, n \dt+) + \ts_1^N  (x, n \dt-) \Big) \varphi_1(x,n\dt) \rmd x \\
& + \sum_{n=0}^{\infty} \int_{0}^{1} \Big( - \ts_3^N (x, n \dt+) + \ts_3^N (x, n \dt-) \Big) \varphi_3(x,n\dt) \rmd x \\
& - \int_{0}^{1} \Big( \sigma_{1,0}(x) \varphi_1(x,0) + \sigma_{3,0} (x) \varphi_3(x,0) \Big) \rmd x.
\end{align*}
Thus,
\begin{align*}
& \int_0^T \int_{0}^{1} \bigg( \ts_1^N \pt \varphi_1 + \frac{\alpha}{2} (\sigma_1^N)^2 \px \varphi_1 + \ts_3^N \pt \varphi_3 - \frac{\alpha}{2} (\sigma_3^N)^2 \px \varphi_3 \\
& - \varphi_1 \Big( \int_{-1}^{1} K(x+y) \ts_3^N(y,t) \rmd y \Big) 
+ \varphi_3 \Big( \int_{-1}^{1} K(x+y) \ts_1^N(y,t) \rmd y \Big) \bigg) \rmd x \rmd t \\
& + \int_{0}^{1} \Big( \sigma_{1,0}(x) \varphi_1(x,0) + \sigma_{3,0}(x) \varphi_3(x,0) \Big) \rmd x \\
= & - \sum_{n=0}^{\infty} \int_{0}^{1} \Big( \ts_1^N (x,n \dt+) - \ts_1^N (x,n \dt-) \Big) \varphi_1(x,n\dt) \rmd x \\
& - \sum_{n=0}^{\infty} \int_{0}^{1} \Big( \ts_3^N (x,n \dt+) - \ts_3^N (x,n \dt-) \Big) \varphi_3(x,n\dt) \rmd x \\
& - \int_{0}^{T} \int_{0}^{1} \varphi_1 \Big( \int_{-1}^{1} K(x+y) \ts_3^N (y,t) \rmd y \Big) \rmd x \rmd t \\
& + \int_{0}^{T} \int_{0}^{1} \varphi_3 \Big( \int_{-1}^{1} K(x+y) \ts_1^N (y,t) \rmd y \Big) \rmd x \rmd t 
\end{align*}
Due to \eqref{2.7_hts}--\eqref{2.10_Burger}, it holds
\begin{align*}
& \ts_1^N(x, n \dt+) - \ts_1^N(x, n\dt-) \\
= & \ts_{1,m,n}^N - \ts_1^N(x, n \dt-) \\
= & (\ts_{1,m,n}^N - \hts{1,m,n}) + \big(\hts{1,m,n} - \ts_1^N(x, n\dt-) \big)\\
= & - g_{1,m,n}^N \dt + \Big( \ts_1^N((m+\vartheta_n)\dx,n \dt- ) - \ts_1^N(x, n \dt-) \Big) \\
& \qquad \qquad \forall\, x \in [(m-1)\dx, (m+1) \dx) \text{ with } m+n \text{ odd.}
\end{align*}
Similar as in \cite{Glimm_basic_1965} and \cite{smoller_book_1983}, using Glimm's random choice method,
\[
\sum_{n=0}^{\infty} \sum_{\substack{1 \leq m \leq 2^N \\ m+n \text{ odd}}}^{} \int_{(m-1)\dx}^{(m+1)\dx} \varphi_1(x,n \dt) \Big( \ts_1^N((m+\vartheta_n)\dx, n\dt-) - \ts_1^N(x, n\dt-) \Big) \rmd x \to 0, \quad \mathrm{a.s.}
\]
Meanwhile, By \eqref{2.13_asmp_l1dis},
\begin{align*}
& \left| \int_{0}^{T} \int_{0}^{1} \varphi_1(x,t) \Big( \int_{-1}^{1} K(x+y) \big( \ts_3^N(y,t) - \ts_3^N(y, [\frac{t}{\dt}] \dt-) \big) \rmd y \Big) \rmd x \rmd t \right| \\
\leq & \| \varphi_1(x,t) \|_{L^1_tL^\infty_x} \| K \|_{L^1} \max_{t} \| \ts_3^N(y,t) - \ts_3^N(y,[\frac{t}{\dt}] \dt -) \|_{L^1_{y}} \overset{N \to \infty }{\longrightarrow} 0 .
\end{align*}
while, by the random sampling method,
\begin{align*}
& \left|  \int_{0}^{T} \int_{0}^{1} \varphi_1(x,t) \Big( \int_{-1}^{1} K(x+y) \big( \ts_3^N (y,[\frac{t}{\dt}] \dt-) - \hts{3,[\frac{t}{\dt}]} (y) \big) \rmd y \Big) \rmd x \rmd t \right| \\
= & \left| \int_{0}^{T} \int_{-1}^{1} \Big( \int_{0}^{1} \varphi_1(x,t) K(x+y) \rmd x \Big) \big( \ts_3^N (y,[\frac{t}{\dt}] \dt-) - \hts{3,[\frac{t}{\dt}]} (y) \big) \rmd y \rmd t \right| \\
\to & 0, \qquad \mathrm{a.s.}
\end{align*}
By a direct decomposition, 
\begin{align*}
& \int_0^T \int_0^1 \varphi_1(x,t) \Big( \int_{-1}^{1} K(x+y) \hts{3,[\frac{t}{\dt}]}(y) \rmd y \Big) \rmd x \rmd t \\
= & \sum_{n=0}^{\infty} \sum_{\substack{1 \leq m \leq 2^N \\ m+n \text{ odd}}}  \int_{n \dt}^{(n+1)\dt} \int_{(m-1)\dx}^{(m+1)\dx} \varphi_1(x,t) \Big( \int_{-1}^{1} K(x+y) \hts{3,n}(y) \rmd y \Big) \rmd x \rmd t.
\end{align*}
then since $K(m\dx + y)$ is a piece-wise constant approximation of $K(x+y)$ and due to \eqref{2.8_g}
\begin{align*}
& \left| \sum_{n=0}^{\infty} \sum_{\substack{1 \leq m \leq 2^N \\ m+n \text{ odd}}}  \int_{n \dt}^{(n+1)\dt} \int_{(m-1)\dx}^{(m+1)\dx} \varphi_1(x,t) \Big( \int_{-1}^{1} K(x+y) \hts{3,n}(y) \rmd y - g_{1,m,n}^N \Big) \rmd x \rmd t \right| \\
= & \left| \sum_{n=0}^{\infty} \sum_{\substack{1 \leq m \leq 2^N \\ m+n \text{ odd}}}  \int_{n \dt}^{(n+1)\dt} \int_{(m-1)\dx}^{(m+1)\dx} \varphi_1(x,t) \Big( \int_{-1}^{1} \big( K(x+y) - K(m\dx + y) \big) \hts{3,n}(y) \rmd y \Big) \rmd x \rmd t \right| \\
\leq & \| \varphi_1 \|_{L^1_{t,x}} \|\ts_3^N\|_{L^\infty_{t,x}} \max\{ \| K(\cdot) - K(2 [\frac{\cdot}{2 \dx}] \dx ) \|_{L^1}, \|  K(\cdot) - K(2 [\frac{\cdot+\dx}{2 \dx}] \dx - \dx) \|_{L^1} \} \\
\to & 0.
\end{align*}
And by the continuity of $\varphi_1$,
\begin{align*}
& \sum_{n=0}^{\infty} \sum_{\substack{1 \leq m \leq 2^N \\ m+n \text{ odd}}} \int_{n \dt}^{(n+1) \dt} \int_{(m-1)\dx}^{(m+1)\dx} (\varphi_1(x,t) - \varphi_1(x,n\dt)) g_{1,m,n}^N \rmd x \rmd t\\
\leq & \| \varphi_1(x,t) - \varphi_1(x,[\frac{t}{\dt}]\dt) \|_{L^1_t L^\infty_x } \max_{n} \sum_m |g_{1,m,n}^N| 
\overset{N \to \infty}{\longrightarrow} 0.
\end{align*}
Thus, one has
\begin{align*}
& \int_0^T \int_{0}^{1} \varphi_1(x,t) \Big( \int_{-1}^{1} K(x+y) \ts_3^N(y,t) \rmd y \Big) \rmd x \rmd t\\
& -\sum_{n=0}^{\infty} \sum_{\substack{1 \leq m \leq 2^N \\ m+n \text{ odd}}}^{} \int_{(m-1)\dx}^{(m+1)\dx} \varphi_1(x, n \dt) \dt g_{1,m,n}^N \rmd x \\
= & \int_{0}^{T} \int_{0}^{1} \varphi_1(x,t) \Big( \int_{-1}^{1} K(x+y) \big( \ts_3^N(y,t) - \ts_3^N(y, [\frac{t}{\dt}] \dt-) \big) \rmd y \Big) \rmd x \rmd t \\
& + \int_{0}^{T} \int_{0}^{1} \varphi_1(x,t) \Big( \int_{-1}^{1} K(x+y) \big( \ts_3^N (y,[\frac{t}{\dt}] \dt-) - \hts{3,[\frac{t}{\dt}]} (y) \big) \rmd y \Big) \rmd x \rmd t \\
& + \sum_{n=0}^{\infty} \sum_{\substack{1 \leq m \leq 2^N \\ m+n \text{ odd}}} \int_{n \dt}^{(n+1) \dt} \int_{(m-1)\dx}^{(m+1)\dx} \varphi_1(x,t) \Big( \int_{-1}^{1} K(x+y) \hts{3,n}(y) \rmd y - g_{1,m,n}^N \Big)  \rmd x \rmd t \\
& + \sum_{n=0}^{\infty} \sum_{\substack{1 \leq m \leq 2^N \\ m+n \text{ odd}}} \int_{n \dt}^{(n+1) \dt} \int_{(m-1)\dx}^{(m+1)\dx} (\varphi_1(x,t) - \varphi_1(x,n\dt)) g_{1,m,n}^N \rmd x \rmd t\\
\to & 0, \qquad \text{a.s. for $\vth$;\;  as }  N \to \infty.
\end{align*}
Similar result holds for $\ts_3$. 
Thus, $(\ts_1,\ts_3)^T$ satisfies \eqref{1.1_sys} in the sense of distribution.

Next, for each smooth convex entropy function $\eta = \eta(\sigma_1,\sigma_3)$ and the corresponding entropy flux function $\psi = \psi(\sigma_1,\sigma_3)$, 
one may multiply $\varphi \partial_{\sigma_1} \eta$ and $\varphi \partial_{\sigma_3} \eta$ to  \eqref{2.10_Burger} respectively, and integrate by parts to get
\begin{align*}
0 \geq & - \int_{0}^{T} \int_{-1}^{1} ( \eta \pt \varphi + \psi \px \varphi ) \rmd x \rmd t \\
& + \sum_{n=0}^{\infty} \int_{-1}^{1} \big( - \varphi \eta(x, n\dt+) + \varphi \eta(x,n \dt-) \big) \rmd x \\
& - \int_{-1}^{1} \varphi(x,0) \eta(\sigma_{1,0}(x), \sigma_{3,0}(x)) \rmd x.			
\end{align*}
By a similar argument as above, one can get that $(\ts_1,\ts_3)^T$ satisfies the entropy condition \eqref{1.8_entropy_eq} and thus is an entropy solution.

The periodicity of $(\ts_1,\ts_3)^T$ follows directly from that of $(\ts_1^N,\ts_3^N)^T$.
Then by choosing a sequence of test functions $\varphi_1^k \to \chi_{[0,1)\times [0,t]}$ and taking $\varphi_3 = 0$,
by the zero mean property of $K(x)$ \eqref{2.5_K}, one can get
\begin{equation*}
0 = \int_{0}^{1} \ts_1(x,t) \rmd x - \int_{0}^{t} \int_{0}^{1} \int_{-1}^{1} K(x+y) \ts_3(y,t) \rmd y \rmd x \rmd t = \int_{0}^{1} \ts_1(x,t) \rmd x.
\end{equation*}
Similarly, the zero mean property holds for $\ts_3$.
\end{proof}

According to Proposition \ref{prop:cons}, at each $t$, as long as \eqref{2.11_asmp_linf}--\eqref{2.13_asmp_l1dis} hold, for almost all choice of $\vartheta$, one can always choose a subsequence, still denoted as $(\ts_1^N,\ts_3^N)^T$, satisfying
\begin{equation} \label{2.17_mildmean}
\left| \int_{0}^{1} \ts_1^N(x,t) \rmd x \right| + \left| \int_{0}^{1} \ts_3^N(x,t) \rmd x \right|  \leq 1.
\end{equation}

\section{Growth Estimate}

In this section a rough estimate on the growth rate of the approximate solutions is given as follows
\begin{prop}\label{prop:growth}
If there is a time $T_0$ (for simplicity, assume $T_0 = n_0 \Delta t^{N_*}+$ for some $N^* \in \mbn$) and a subsequence of approximate solutions $(\ts_1^N,\ts_3^N)^T$ satisfying
\begin{gather}
\mz = \sup_N \{ \TV \ts_1^N(\cdot,T_0) + \TV \ts_3^N(\cdot,T_0) \} < +\infty, \label{2.14_t0bdd}\\
\left| \int_{0}^{1} \ts_1^N(x,T_0) \rmd x \right| + \left| \int_{0}^{1} \ts_3^N(x,T_0) \rmd x \right|  \leq 1, \label{2.15_balanced}
\end{gather}
then for any $T_*$ satisfying, for C.F.L. condition, 
\begin{equation}\label{2.16_cfl_tv}
\alpha (1+\frac{1}{2} \mz) \exp \left( \beta \me T_* \right) < {\Lambda},
\end{equation}
there exists a further subsequence such that \eqref{2.12_asmp_tv} holds in the sense
\begin{equation} \label{2.19_gr_tv}
\ms{t}  \leq \mz \exp \left( {\beta} \me (t - T_0) \right), \quad \forall\, t \in [T_0, T_0+T_*].
\end{equation}
Moreover, it holds
\begin{align}
\TV \ts_1^N(\cdot,t) \leq & \TV \ts_1^N (\cdot, T_0) + {\beta} \me (t-T_0) \max_{\tau \in [T_0,t]} \TV \ts_3^N(\cdot,\tau), \quad \forall\, t \in [T_0, T_0+T_*], \label{2.20_gr_tv1}\\
\TV \ts_3^N(\cdot,t) \leq & \TV \ts_3^N (\cdot, T_0) + {\beta} \me (t-T_0) \max_{\tau \in [T_0,t]} \TV \ts_1^N(\cdot,\tau), \quad \forall\, t \in [T_0, T_0+T_*]. \label{2.21_gr_tv3}
\end{align}
Meanwhile, for $t \in [T_0,T_0+T_*]$, \eqref{2.11_asmp_linf} and \eqref{2.13_asmp_l1dis} hold for
\begin{gather}
C_1 = 1 + \frac{\mz}{2} \exp({\beta} \me (t-T_0)), \label{2.22_gr_linf}\\
C_3 = \Big( 4 \Lambda \mz + {\beta} \me \mz  \Big) \exp({\beta} \me (t-T_0)) + {\beta} \me. \label{2.23_gr_L1dis}
\end{gather}
\end{prop}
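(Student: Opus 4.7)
The plan is to induct on the scheme's time steps and track how the total variation can grow per step. Fix the sequence $(\ts_1^N, \ts_3^N)^T$ at $t = T_0$ and take $n\dt \geq T_0$. Within one time interval $[n\dt,(n+1)\dt)$, under the CFL assumption the construction decomposes into three ingredients: the random sampling which yields the piecewise constant $\hts{i,n}$, the nonlocal correction $\ts_{i,m,n}^N = \hts{i,m,n} - g_{i,m,n}^N\dt$, and finally the Burgers Riemann solvers \eqref{2.10_Burger}. Burgers Riemann solvers do not increase the total variation over a period, and sampling only reads off old values, so $\TV\hts{i,n}\leq \TV\ts_i^N(\cdot,n\dt-)$. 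Hence the only source of TV growth per step is the correction $-g_{i,m,n}^N\dt$.

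Using \eqref{2.8_g}, an index shift gives
\[
g_{1,m,n}^N - g_{1,m-2,n}^N = \sum_{\tilde m} K_{m+\tilde m}^N\bigl(\hts{3,\tilde m,n}-\hts{3,\tilde m+2,n}\bigr),
\]
so by Fubini and the normalization $\sum_m|K_m^N|=\frac{\beta}{4}\me$ in \eqref{2.8_kmn_pro},
\[
\TV g_{1,\cdot,n}^N \leq \tfrac{\beta}{4}\me\,\TV\hts{3,n}\leq \tfrac{\beta}{4}\me\,\TV\ts_3^N(\cdot,n\dt-),
\]
and likewise for $g_{3,\cdot,n}^N$. The per-step recursions
\[
\TV\ts_1^N(\cdot,(n+1)\dt)\leq \TV\ts_1^N(\cdot,n\dt)+\tfrac{\beta}{4}\me\,\dt\,\TV\ts_3^N(\cdot,n\dt),\qquad \text{and symmetrically,}
\]
telescope from $T_0$ to $t$ and, bounding each summand by the running maximum, yield \eqref{2.20_gr_tv1} and \eqref{2.21_gr_tv3} (with a factor $4$ to spare). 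Adding the two telescoped inequalities and applying Gronwall to $\msn{t}$ produces the exponential bound \eqref{2.19_gr_tv}.

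The $L^\infty$ bound \eqref{2.22_gr_linf} then follows from the elementary fact that a periodic function $f$ of period $1$ satisfies $\|f\|_{L^\infty}\leq |\int_0^1 f|+\tfrac12\TV f$: after invoking \eqref{2.17_mildmean} to select a further subsequence keeping the mean at $t$ under control by $1$, the TV bound \eqref{2.19_gr_tv} delivers \eqref{2.22_gr_linf}. For the $L^1$ time-continuity \eqref{2.13_asmp_l1dis} with constant $C_3$ from \eqref{2.23_gr_L1dis}, I would split the displacement $\ts_i^N(\cdot,t_1)-\ts_i^N(\cdot,t_2)$ into the Burgers evolution inside each strip (discontinuities moving with speed $\leq\Lambda$, contributing $\Lambda\,\TV\ts_i^N|t_2-t_1|$), the sampling jumps at discrete times (contributing $\TV\ts_i^N\cdot 2\dt$ per crossed sampling time, absorbed in the $2\dt$ term of \eqref{2.13_asmp_l1dis}), and the correction step which adds $\dt\sum_m|g_{i,m,n}^N|\dx\leq \tfrac{\beta}{4}\me\,\dt$ per crossed step.

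The main technical obstacle is the subtle interplay among the CFL condition, the probabilistic subsequence selection, and the bootstrap that closes the induction: the mean is not exactly conserved by the scheme, and the subsequence furnished by \eqref{2.17_mildmean} depends on $t$. I would resolve this by first establishing the TV recursion, which is pathwise and independent of the mean, then applying the subsequence extraction to obtain the mean control and hence the $L^\infty$ bound. The hypothesis \eqref{2.16_cfl_tv} on $T_*$ is tuned so that $\alpha C_1<\Lambda$ holds throughout $[T_0,T_0+T_*]$, ensuring that no waves interact across mesh cells during any step, which keeps the scheme's construction valid and closes the induction on $n$.
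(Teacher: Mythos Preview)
Your approach is essentially the paper's: decompose each step into sampling (TV non-increasing), Burgers Riemann solvers (TV preserving), and the nonlocal correction (the only TV source), then do the index shift on $g_{1,m,n}^N - g_{1,m-2,n}^N$ and Fubini against $\sum|K_m^N|$. Two points, however, deserve correction.

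First, the constant: the convolution sum in \eqref{2.8_g} runs over $-2^N<\tilde m\le 2^N$, which is \emph{two} spatial periods of $\hts{3,n}$, so after Fubini the TV factor picks up a $2$. The per-step increment is $\beta\me\,\dt\,\TV\ts_3^N$, not $\tfrac{\beta}{4}\me\,\dt\,\TV\ts_3^N$; there is no ``factor $4$ to spare,'' the bound \eqref{2.20_gr_tv1} is essentially sharp for this argument.

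Second, and more substantively, your bootstrap does not close as written. You propose: TV recursion first (pathwise), then subsequence extraction for the mean via \eqref{2.17_mildmean}, then $L^\infty$ from mean plus TV. But \eqref{2.17_mildmean} comes from Proposition~\ref{prop:cons}, which already requires \eqref{2.11_asmp_linf}; and the TV recursion itself is only valid step-by-step under CFL, which needs an $L^\infty$ bound at each step. The subsequence extraction is a limit operation and cannot live inside the induction on $n$. The paper's fix is to run, alongside the TV recursion, a \emph{direct} $L^\infty$ recursion
\[
\|\ts_1^N(\cdot,n\dt+)\|_{L^\infty}\le \|\ts_1^N(\cdot,(n-1)\dt+)\|_{L^\infty}+\beta\me\,\dt\,\|\ts_3^N(\cdot,(n-1)\dt+)\|_{L^\infty},
\]
which uses only the hypothesis \eqref{2.15_balanced} at $T_0$ (giving $\|\ts_i^N(\cdot,T_0)\|_{L^\infty}\le 1+\tfrac12\mz$) and yields the cruder bound $(1+\tfrac12\mz)\exp(\beta\me(t-T_0))$. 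This is exactly what \eqref{2.16_cfl_tv} is calibrated against, so CFL holds throughout and the induction closes. Only \emph{after} all three a~priori bounds are in hand does one invoke Proposition~\ref{prop:cons} and \eqref{2.17_mildmean} to upgrade to the sharper $C_1$ in \eqref{2.22_gr_linf}. Add this intermediate $L^\infty$ step and your argument is complete.
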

\begin{proof}
In each time span $t \in ((n-1) \dt, n \dt) \subseteq [T_0,T_0+T_*]$ ($n \in \mathbb{Z}^+$), 
$(\ts_1^N,\ts_3^N)^T$ is constructed by solving finitely many Riemann problems \eqref{2.10_Burger} of two decoupled Burgers equations with piecewise constant initial data at $t = (n-1) \dt+$ on each period,
thus the total variation remains constant, i.e., for $i=1,3$
\[
\TV \ts_i^N(\cdot,n \dt-)  =
\TV \ts_i^N(\cdot,(n-1) \dt+) .
\]
While at the time $t = n \dt$, during the random sampling \eqref{2.7_hts}, the total variation can decrease only,
\[
\TV \hts{i,n} \leq \TV \ts_i^N(\cdot,n \dt-) = \TV \ts_i^N(\cdot,(n-1) \dt+).
\]
Therefore the only possibility that may increase the total variation is the effect of the nonlocal inhomogeneous terms.
Indeed, by \eqref{2.8_g}, \eqref{2.8_kmn_pro} and \eqref{2.9_tso},
\begin{align}
& \TV \ts_1^N(\cdot, n\dt+) \notag \\
= & \sum_{\substack{1 \leq m \leq 2^N \\ m+n \text{ odd}}}^{}  |\ts_{1,m,n}^N - \ts_{1,m-2,n}^N| \notag \\
\leq & \sum_{\substack{1 \leq m \leq 2^N \\ m+n \text{ odd}}}^{} \Big( |\hts{1,m,n} - \hts{1,m-2,n}| + |g_{1,m,n}^N - g_{1,m-2,n}^N| \dt \Big) \notag \\
= & \sum_{\substack{1 \leq m \leq 2^N \\ m+n \text{ odd}}} \Big( |\hts{1,m,n} - \hts{1,m-2,n}| + \Big| \sum_{\substack{-2^N < \tilde{m} \leq 2^N \\ \tilde{m} + n \text{ odd}}} (K_{m+\tilde{m}}^N - K_{m-2+\tilde{m}}^N ) \hts{3,\tilde{m},n} \Big| \dt \Big) \notag \\
= & \sum_{\substack{1 \leq m \leq 2^N \\ m+n \text{ odd}}} \Big( |\hts{1,m,n} - \hts{1,m-2,n}|  + \Big| \sum_{\substack{-2^N < \tilde{m} \leq 2^N \\ \tilde{m} + n \text{ odd}}} K_{m+\tilde{m}}^N ( \hts{3,\tilde{m},n} - \hts{3,\tilde{m}+2,n}) \Big| \dt \Big) \notag \\
\leq & \sum_{\substack{1 \leq m \leq 2^N \\ m+n \text{ odd}}}  |\hts{1,m,n} - \hts{1,m-2,n}|  + \Big| \sum_{\substack{-2^N < \tilde{m} \leq 2^N \\ \tilde{m} + n \text{ odd}}} \big( \sum_{\substack{1 \leq m \leq 2^N \\ m+n \text{ odd}}} | K_{m+\tilde{m}}^N| \big) | \hts{3,\tilde{m},n} - \hts{3,\tilde{m}+2,n}| \Big| \dt \notag \\
\leq & \TV \hts{1,n} + 2 \TV \hts{3,n}  \sum_m |K_{m}^N| \dt  \notag \\
\leq & \TV \ts_1^N(\cdot,(n-1)\dt+) + \TV \ts_3^N(\cdot,(n-1)\dt+) \cdot {\beta} \me \dt. \label{2.20_tv13}
\end{align}
Similarly, one has
\begin{equation}
\TV \ts_3^N(\cdot, n\dt+) \leq \TV \ts_3^N(\cdot,(n-1)\dt+) + \TV \ts_1^N(\cdot,(n-1)\dt+) \cdot {\beta} \me \dt. \label{2.21_tv31}
\end{equation}
Now, \eqref{2.20_gr_tv1} and \eqref{2.21_gr_tv3} follow directly.
Moreover, adding up \eqref{2.20_tv13}--\eqref{2.21_tv31} yields
\begin{align*}
& \TV \ts_1^N(\cdot, n\dt+) + \TV \ts_3^N(\cdot, n\dt+)  \\
\leq & \big( \TV \ts_1^N(\cdot,(n-1)\dt+) + \TV \ts_3^N(\cdot,(n-1)\dt+) \big) \cdot \big( 1+  {\beta} \me \dt \big). 
\end{align*}
Thus, for $N$ large,
\begin{align*}
& \TV \ts_1^N(\cdot,t) + \TV \ts_3^N(\cdot,t) \\
= & \TV \ts_1^N(\cdot, [\frac{t}{\dt}] \dt+) + \TV \ts_3^N (\cdot, [\frac{t}{\dt}] \dt+)\\
\leq & \Big( \TV \ts_1^N(\cdot, [\frac{T_0}{\dt}] \dt+) + \TV \ts_3^N (\cdot, [\frac{T_0}{\dt}] \dt+) \Big)  \big( 1 +  {\beta} \me \dt \big)^{[\frac{t-T_0}{\dt}]} \\
\leq & \mz \exp \left( {\beta} \me (t- T_0) \right),
\end{align*}
which proves \eqref{2.19_gr_tv}.

On the other hand, 
during the construction of the approximate solutions, 
the process of random sampling and solving Riemann problems for decoupled Burgers equations would not increase the $L^\infty$ norm, namely for $i=1,3$
\begin{equation*}
\max_m |\hts{i,m,n}|  \leq \| \ts_i^N(\cdot,n \dt-) \|_{L^\infty} = \| \ts_i^N(\cdot,(n-1) \dt+) \|_{L^\infty}.
\end{equation*}
Thus, only the inhomogeneous terms may increase the $L^\infty$ norm.
In fact, by the construction procedure \eqref{2.8_g} and \eqref{2.9_tso},
\begin{align*}
& \| \ts_1^N(\cdot,n \dt+) \|_{L^\infty} \\
= & \max_m |\ts_{1,m,n}^N| \\
\leq & \max_m |\hts{1,m,n}| + \max_m |g_{1,m,n}^N| \dt \\
= & \max_m |\hts{1,m,n}| + \max_m \left| \sum_{\substack{-2^N < \tilde{m} \leq 2^N \\ \tilde{m} + n \text{ odd}}} K_{m+\tilde{m}}^N \hts{3,\tilde{m},n}  \right| \dt \\
= & \max_m |\hts{1,m,n}| + \max_m |\hts{3,m,n}| \cdot \sum_m |K_m^N| \dt \\
\leq & \| \ts_1^N(\cdot, (n-1) \dt+) \|_{L^\infty} + \| \ts_3^N(\cdot, (n-1) \dt+) \|_{L^\infty} {\beta} \me \dt.
\end{align*}
Similar result holds for $\ts_3^N$. 
Thus, adding them up leads to
\begin{align*}
& \| \ts_1^N(\cdot, t) \|_{L^\infty} + \| \ts_3^N(\cdot, t) \|_{L^\infty} \\
\leq & \Big( \| \ts_1^N(\cdot, T_0) \|_{L^\infty} + \| \ts_3^N(\cdot, T_0) \|_{L^\infty} \Big) \exp({\beta} \me (t-T_0)) \\
\leq & \big( 1 + \frac{1}{2} \mz \big) \exp({\beta} \me (t-T_0)).
\end{align*}

Next, for $n \dt < t_1 < t_2 < (n+1)\dt$,
since the Riemann solvers are constants along the straight characteristics,
\begin{align*}
& \| \ts_1^N(\cdot,t_1) - \ts_1^N(\cdot,t_2) \|_{L^1} \\
= & \sum_{\substack{1 \leq m \leq 2^N \\ m+n \text{ even}}} \int_{(m-1) \dx}^{(m+1) \dx} | \ts_1^N(\cdot,t_1) - \ts_1^N(\cdot,t_2) | \rmd x \\
\leq & \sum_{\substack{1 \leq m \leq 2^N \\ m+n \text{ even}}} 2 \dx \TVm \ts_1^N(\cdot,n\dt+) \\
= & 2 \Lambda \dt \TV \ts_1^N(\cdot,n \dt+),
\end{align*}
while on the line $t = n \dt$,
\begin{align*}
& \| \ts_1^N(\cdot,n\dt+) - \ts_1^N(\cdot,n\dt-) \|_{L^1} \\
\leq & \sum_{\substack{1 \leq m \leq 2^N \\ m+n \text{ odd}}} |\ts_{1,m,n}^N - \hts{1,m,n}| 2 \dx + \int_{0}^{1} | \hts{1,n}(x) - \ts_1^N(x,n\dt-) |\rmd x \\
= & \sum_{\substack{1 \leq m\leq 2^N \\ m+n \text{ odd}}}  2 |g_{1,m,n}^N| \dt \dx \\
& \quad + \sum_{\substack{1 \leq m \leq 2^N \\ m+n \text{ odd}}}^{} \int_{(m-1)\dx}^{(m+1)\dx} | \ts_1^N( (m+\vth_n) \dx, n \dt- ) - \ts_1^N(x,n\dt-) | \rmd x \\
\leq & \max_m  \bigg| \sum_{\substack{ -2^N < \tilde{m} \leq 2^N \\ \tilde{m} + n \text{ odd} }} K_{\tilde{m} + m}^N \hts{3,\tilde{m},n}  \bigg| \dt + \sum_{\substack{1 \leq m \leq 2^N \\ m+n \text{ odd}}} \TVm \ts_1^N(\cdot,n \dt-) 2 \dx \\
\leq & {\beta} \me \max_m |\hts{3,m,n}| \dt + 2 \TV \ts_1^N(\cdot,n\dt-) \dx \\
\leq & {\beta} \me \| \ts_3^N(\cdot,n \dt-) \|_{L^\infty} \dt + 2 \Lambda \TV \ts_1^N(\cdot, n\dt-) \dt.
\end{align*}
Thus, for $t_1 < t_2$, one can combine the above inequalities, and similar ones for $\ts_3^N$, to get \eqref{2.13_asmp_l1dis} with 
\begin{equation} \label{3.11}
C_3 = \bigg( 4 \Lambda \mz + {\beta} \me (1 + \frac{1}{2} \mz) \bigg) \exp({\beta} \me T_*).
\end{equation}

It follows from the above estimates, as mentioned after Proposition \ref{prop:cons}, that \eqref{2.17_mildmean} holds for $t \in [T_0,T_0+T_*]$ for a subsequence of the approximate solutions.
With this and \eqref{2.19_gr_tv}, one can improve the estimates of $L^\infty$ to obtain \eqref{2.11_asmp_linf} with $C_1$ given in \eqref{2.22_gr_linf}, and then improve the above estimates \eqref{3.11} to get \eqref{2.13_asmp_l1dis} with $C_3$ given in \eqref{2.23_gr_L1dis}.
\end{proof}

It may be pointed out here that by this proposition, one can already get the global entropy solution by modifying the scheme with enlarging $\Lambda$ with time to avoid the violation of the C.F.L. condition.
Instead of doing this, we would like to give some much more detailed analysis in the next section to get the uniform a priori bound for the solutions,
which would not only show the global existence but also describe the behavior of the solutions.
Besides, a uniformly bounded solution is much more meaningful for the system \eqref{1.1_sys}, since it is an approximate system obtained through weakly nonlinear geometric optics approximation.

\section{Decay analysis}

In this section, it is shown that when the wave strengthes of $\ts_1$ and $\ts_3$ are much stronger than that of  $\ts_2$, 
the cancellation effect caused by genuine nonlinearity would dominate the effect of nonlinear resonance and make the solution decay.
In order to accomplish this analysis, the methods of approximate characteristics and approximate conservation laws originally developed in \cite{Glimm_Lax_1970} for the isentropic Euler system would be adopted to the system \eqref{1.1_sys}.

The result of this part could be summarized as follows:
\begin{prop}\label{prop:decay}
If there exist a time $T_0 = n_0 \Delta t^{N_*}+ $ and a subsequence of approximate solutions $(\ts_1^N,\ts_3^N)^T$ such that \eqref{2.17_mildmean} holds at $T_0$ and 
\begin{equation}
\msn{T_0} \in \big[ \frac{239}{300} \mb, \frac{4}{5} \mb \big],
\footnote{Here, $\mb$ is defined in \eqref{1.10_mb}, and the constants $\frac{239}{300}$ and $\frac{4}{5}$ are chosen accordingly to simplify the naration in the following proof. }
\end{equation}
then for 
\begin{equation}
T_* = \frac{60}{ \alpha \mb},
\end{equation}
there exists a further subsequence, such that
\begin{equation}
\ms{T_0+T_*} < \frac{239}{300} \mb
\end{equation}
and 
\begin{equation} \label{4.4_tv_bdd}
\ms{t} < \mb, \quad \forall\, t \in [T_0,T_0+T_*].
\end{equation}
\end{prop}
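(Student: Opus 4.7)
\emph{Proof proposal.} The plan is to separate the argument into a gross bound from Proposition \ref{prop:growth}, which already delivers \eqref{4.4_tv_bdd}, and a refined Burgers-type decay estimate, obtained by adapting the Glimm--Lax approximate characteristic/approximate conservation law method to the modified scheme, which furnishes the sharp improvement to $\frac{239}{300}\mb$. With $\mz \le \frac{4}{5}\mb$ and $\mb \ge 300\,\frac{\beta}{\alpha}\me$, the choice $T_* = \frac{60}{\alpha\mb}$ yields $\beta\me T_* \le \frac{1}{5}$, so the CFL hypothesis \eqref{2.16_cfl_tv} follows from the a priori choice \eqref{2.6_Lam}. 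Proposition \ref{prop:growth} then applies on $[T_0,T_0+T_*]$ and gives
\begin{equation*}
\msn{t} \,\le\, \tfrac{4}{5}\mb\cdot e^{1/5} \,<\, 0.978\,\mb \,<\, \mb,
\qquad t\in[T_0,T_0+T_*],
\end{equation*}
which is exactly \eqref{4.4_tv_bdd}; together with \eqref{2.22_gr_linf}, \eqref{2.23_gr_L1dis} and the propagation of \eqref{2.17_mildmean}, this supplies the hypotheses needed to extract a further $\vth$-a.s.\ convergent subsequence.

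The exponential bound above is consistent with $\msn{T_0+T_*}$ being as large as $0.978\,\mb$ and so cannot by itself give $<\frac{239}{300}\mb\simeq 0.797\,\mb$. To push below $\frac{239}{300}\mb$ one exploits the fact that between sampling times the two components of \eqref{2.10_Burger} are decoupled, scalar, genuinely nonlinear Burgers equations with characteristic speed $\alpha\ts_i$. I would adapt the framework of \cite{Glimm_Lax_1970} to each family $i\in\{1,3\}$ separately: build forward and backward approximate characteristics through the mesh, track signed wave strengths (positive for rarefactions, negative for shocks), and use the unit periodicity together with the approximate zero-mean property \eqref{2.17_mildmean} to force the positive and negative wave totals per period to almost cancel. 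The resulting Lax--Oleinik/Glimm--Lax cancellation should yield an estimate of the form
\begin{equation*}
\TV\ts_i^N(\cdot,T_0+T_*) \,\le\, \frac{C_{\mathrm{GL}}}{\alpha\,T_*} \,+\, \mathcal{R}_i^N,
\qquad i=1,3,
\end{equation*}
with an absolute Burgers-specific constant $C_{\mathrm{GL}}$, where the source remainder $\mathcal{R}_i^N$, arising from the additive update $-g_{i,m,n}^N\dt$ in \eqref{2.9_tso}, is controlled exactly as in the derivation of \eqref{2.20_gr_tv1}--\eqref{2.21_gr_tv3} by $\beta\me\,T_*\cdot\max_{\tau\in[T_0,T_0+T_*]}\TV\ts_{i'}^N(\cdot,\tau) \le \tfrac{\mb}{5}$ for the opposite index $i'\in\{1,3\}\setminus\{i\}$.

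Summing $i=1,3$ and substituting $T_*=\frac{60}{\alpha\mb}$ one gets
\begin{equation*}
\msn{T_0+T_*} \,\le\, \Bigl(\tfrac{C_{\mathrm{GL}}}{30} + \tfrac{2}{5}\Bigr)\mb.
\end{equation*}
The numerical constants in the statement---$\tfrac{4}{5}$, $\tfrac{239}{300}$, the factor $300$ in \eqref{1.10_mb}, and $T_*=\tfrac{60}{\alpha\mb}$---are calibrated precisely so that $\tfrac{C_{\mathrm{GL}}}{30}+\tfrac{2}{5}<\tfrac{239}{300}$ for the Burgers-specific constant $C_{\mathrm{GL}}$ produced in the previous step (any bound $C_{\mathrm{GL}} < \tfrac{119}{10}$ suffices, which is extremely loose compared with the standard Burgers Lax--Oleinik value). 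Passing to the sup in $N$ on the further subsequence then closes the bootstrap.

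The principal technical difficulty lies in propagating the Glimm--Lax analysis across the additive source step \eqref{2.9_tso}, which perturbs the wave strengths on the mesh by $O(\beta\me\,\dt)$ per timestep and therefore threatens the approximate conservation of signed wave strength on which the $O(1/t)$ decay rests. The structural feature that rescues the argument is that $g_{i,m,n}^N$ in \eqref{2.8_g} is built entirely from the \emph{other} family's sampled values, so the interaction enters as an additive, $\beta\me T_*$-bounded remainder on the $i$-family rather than as a multiplicative degradation of the intrafamily cancellation; it can therefore be decoupled cleanly from the Burgers-specific bookkeeping. Verifying that this decoupling survives the random sampling (using the same a.s.\ argument that entered Proposition \ref{prop:cons}) and that the explicit constants are compatible with the cut-off $\tfrac{239}{300}$ is where the main effort will concentrate.
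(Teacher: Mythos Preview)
Your first step—using Proposition~\ref{prop:growth} together with $\beta\me T_*\le\tfrac15$ to obtain \eqref{4.4_tv_bdd}—matches the paper exactly. The gap is in the decay step. You propose to apply the Glimm--Lax analysis symmetrically to both families and claim that the source remainder obeys $\mathcal{R}_i^N\le\beta\me T_*\cdot\max_\tau\TV\ts_{i'}^N\le\tfrac{\mb}{5}$, ``exactly as in \eqref{2.20_gr_tv1}--\eqref{2.21_gr_tv3}.'' But in the Glimm--Lax framework the source $\Delta_i$ enters at the level of the \emph{positive variation} $X_i^+$ (see the widening estimate \eqref{4.13_wdn} in the paper), and for periodic zero-mean data $\TV\ts_i=2X_i^+$. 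Hence the source contribution to $\TV\ts_i$ is $2\Delta_i$, not $\Delta_i$, so in fact $\mathcal{R}_i^N\le\tfrac{2\mb}{5}$. Summing over $i=1,3$ with your crude bound $\max_\tau\TV\ts_{i'}^N\le\mb$ yields $\sum_i\mathcal{R}_i^N\le\tfrac{4\mb}{5}=\tfrac{240}{300}\mb$, which already exceeds $\tfrac{239}{300}\mb$ no matter how small $C_{\mathrm{GL}}$ is. Your numerics therefore do not close.

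The paper avoids this by an asymmetric bootstrap that you are missing. By the symmetry of \eqref{1.1_sys} one may assume $\TV\ts_1^N(T_0)\ge\tfrac12\msn{T_0}\ge\TV\ts_3^N(T_0)$. For the \emph{smaller} family $\ts_3$ the paper uses only the growth bound \eqref{2.21_gr_tv3}, obtaining $\TV\ts_3^N(\cdot,t)\le\tfrac12\mz+\beta\me T_*\mb\le\tfrac{3}{5}\mb$ uniformly on $[T_0,T_0+T_*]$. This sharp a priori control of $\ts_3$ is then fed back as the source bound in the Glimm--Lax analysis, which is carried out \emph{only} for $\ts_1$: with $\Delta_1$ controlled by $\beta\me T_*\cdot\tfrac{3}{5}\mb$ instead of $\beta\me T_*\cdot\mb$, the widening argument yields $\TV\ts_1(T_0+T_*)\le\tfrac{49}{300}\mb$. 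Summing gives $\tfrac{49}{300}\mb+\tfrac{3}{5}\mb=\tfrac{229}{300}\mb<\tfrac{239}{300}\mb$. The essential idea—control the small family by growth alone first, then use that to shrink the cross-family source in the decay analysis of the large one—is what makes the constants fit, and it is absent from your outline.
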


\begin{rem}
Combining Propositions \ref{prop:cons}, \ref{prop:growth} and \ref{prop:decay}, one can get easily the existence result of Theorem \ref{thm:main}.
\end{rem}

\begin{proof} 
First, according to \eqref{2.19_gr_tv} of Proposition \ref{prop:growth} and noting the definition of $\mb$ \eqref{1.10_mb}, it holds
\[
\ms{t} \leq \mz \exp(\frac{1}{5}) < \mb, \quad \forall\, t \in [T_0,T_0+T_*],
\]
which proves \eqref{4.4_tv_bdd}.
Moreover, it is easy to see that the system has a symmetry with respect to $\ts_1$ and $\ts_3$, thus without loss of generality, one may assume 
\[
\TV \ts_1^N(T_0) \geq \frac12 \ms{T_0} \geq  \TV \ts_3^N(T_0)
\]
at least for a subsequence.
Then by \eqref{2.21_gr_tv3} of Proposition \ref{prop:growth}, it holds
\begin{equation} \label{4.5_tv3_done}
\TV \ts_3^N(\cdot,t) < \frac{1}{2} \mz + {\beta} \me T_* \mb \leq \frac{3}{5} \mb, \quad \forall\, t \in [T_0,T_0+T_*].
\end{equation}
Thus, it remains to provide a bound for $\TV \ts_1^N(\cdot,T_0+T_*)$, which needs a detailed analysis on the decay of the solution.

The rest part of the proof is divided into 3 steps: first the approximate characteristics and approximate conservation laws are introduced and the uniform bounds for some useful quantities are given, then a suitable subsequence is chosen to pass to the limit, at last the decay is established through analyzing the widening effects of the rarefaction waves.

\subsection{Approximate characteristics and approximate conservation laws}

One may define the approximate characteristics as follows.
An approximate 1-characteristic is a union of line segments constructed according to the approximate solution $\ts_1^N$,
in which, each line segment is either a classical 1-characteristic or a 1-shock of the corresponding Burgers equation,
and its continuation starts from the diamond center that contains its ending,
meanwhile, for the choice of this continuation under different cases, one may follow the discussion given in Page 30 of \cite{Glimm_Lax_1970}.
Roughly speaking, the choice is made to prevent the 1-rarefaction wave from crossing the approximate 1-characteristic.
See Appendix C for the details.
In a similar manner, one may define the approximate 3-characteristics.

For each mesh diamond $\dm_{m,n}^N$ centering at $(m\dx,n\dt)$ ($(m+n)$ is even) in the construction of $(\ts_1^N,\ts_3^N)^T$,
we denote $\alpha_{i,m,n}^N, \beta_{i,m,n}^N$ as the $i$-waves entering $\dm_{m,n}^N$ from the southwest and southeast mesh edges respectively,
and $\gamma_{i,m,n}^N$ as the ones leaving from north edges, see Figure~\ref{fig:2_dm}.
Here, we also use them to denote the signed wave strength of the corresponding waves, such as 
\[
\alpha_{1,m,n}^N = \ts_1^N(m\dx,n\dt-) - \ts_1^N((m-1+\vth_n)\dx, n\dt-).
\]

\begin{rem}[To Be Deleted Before Submission]
Precisely,
\begin{align*}
\alpha_{1,m,n}^N = & \ts_1^N(m\dx,n\dt-) - \ts_1^N((m-1+\vth_n)\dx, n\dt-), \\
\beta_{1,m,n}^N = & \ts_1^N((m+1+\vth_n)\dx,n\dt-) - \ts_1^N(m\dx, n\dt-), \\
\gamma_{1,m,n}^N = & \ts_1^N((m+1+\vth_n)\dx,n\dt+) - \ts_1^N((m-1+\vth_n)\dx, n\dt+) \\
= & \ts_{1,m+1,n}^N - \ts_{1,m-1,n}^N
\end{align*}
and 
\begin{align*}
\alpha_{3,m,n}^N = & \ts_3^N((m-1+\vth_n)\dx, n\dt-) - \ts_3^N(m\dx,n\dt-), \\
\beta_{3,m,n}^N = & \ts_3^N(m\dx, n\dt-) - \ts_3^N((m+1+\vth_n)\dx,n\dt-), \\
\gamma_{3,m,n}^N = & \ts_{3,m-1,n}^N - \ts_{3,m+1,n}^N.
\end{align*}

\end{rem}

\emptline

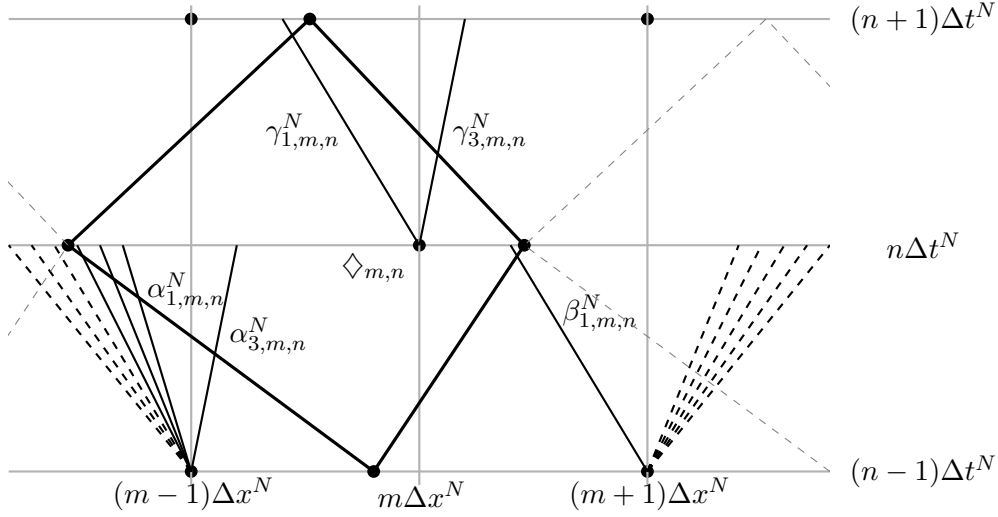
\begin{figure}[htbp]
\centering
\begin{tikzpicture}[scale = .6]

    \begin{scope}
    \clip (-9,-5.5) rectangle (9,5.5);
    \draw [black!30, thick] (-9,-5) -- (9,-5);
    \draw [black!30, thick] (-9,0) -- (9,0);
    \draw [black!30, thick] (-9,5) -- (9,5);
    \fill (5,5) circle (4pt);
    \fill (-5,5) circle (4pt);
    \fill (0,0) circle (4pt);
    \fill (5,-5) circle (4pt);
    \fill (-5,-5) circle (4pt);
    \draw [black!30, thick] (-5,-5.3) -- (-5,5.3);
    \draw [black!30, thick] (0,-5.3) -- (0,5.3);
    \draw [black!30, thick] (5,-5.3) -- (5,5.3);
    \draw [black, thick] (0,0) -- (-3,5) node[pos=.5,left] {$\gamma_{1,m,n}^N$}; 
    \draw [black, thick] (0,0) -- (1,5) node[pos=.5,right] {$\gamma_{3,m,n}^N$};
    \draw [black, thick, dashed] (-5,-5) -- (-9,0);
    \draw [black, thick, dashed] (-5,-5) -- (-8.5,0);
    \draw [black, thick, dashed] (-5,-5) -- (-8,0);
    \draw [black, thick] (-5,-5) -- (-7,0);
    \draw [black, thick] (-5,-5) -- (-7.5,0);
    \draw [black, thick] (-5,-5) -- (-6.5,0) node[pos=.8,right] {$\alpha_{1,m,n}^N$};

    \draw [black, thick] (-5,-5) -- (-4,0) node[pos=.6,right] {$\alpha_{3,m,n}^N$};
    
    \draw [black, thick] (5,-5) -- (2,0) node[pos=.7,right] {$\beta_{1,m,n}^N$};
    \draw [black, thick, dashed] (5,-5) -- (9,0);
    \draw [black, thick, dashed] (5,-5) -- (8.5,0);
    \draw [black, thick, dashed] (5,-5) -- (8,0);        
    \draw [black, thick, dashed] (5,-5) -- (7.5,0) (5,-5) -- (7,0);
    \fill (-7.7,0) circle (4pt);
    \fill (2.3,0) circle (4pt);
    \fill (-2.4,5) circle (4pt);
    \fill (-1,-5) circle (4pt);
    \draw [black, very thick] (-7.7,0) -- (-2.4,5) -- (2.3,0) -- (-1,-5) -- cycle;
    \draw [black!50, dashed] (-12.4,5) -- (-7.7,0) -- (-11,-5) -- (-17.7,0) -- cycle (9,-5) -- (2.3,0) -- (7.6,5) -- (12.3,0) -- cycle;
    \end{scope}
    \node (n) at (11,0) {$n \dt$};
    \node (n1) at (11,-5) {$(n-1) \dt$};
    \node (n2) at (11,5) {$(n+1) \dt$};
    \node (m) at (0,-5.5) {$m \dx$};
    \node (m1) at (-5,-5.5) {$(m-1) \dx$};
    \node (m2) at (5,-5.5) {$(m+1) \dx$};
    \node (mn) at (-1,-0.5) {$\diamondsuit_{m,n}$};
    
\end{tikzpicture}
\caption{Waves in one diamond}
\label{fig:2_dm}
\end{figure}

Then in the aforementioned approximate scheme, by \eqref{2.9_tso},
\begin{align}
\gamma_{1,m,n}^N = & \ts_{1,m+1,n}^N - \ts_{1,m-1,n}^N \notag \\
= & \hts{1,m+1,n} - \hts{1,m-1,n} - (g_{1,m+1,n}^N - g_{1,m-1,n}^N) \dt  \notag \\
= & \alpha_{1,m,n}^N + \beta_{1,m,n}^N - (g_{1,m+1,n}^N - g_{1,m-1,n}^N) \dt, \label{3.4_gamma} \\
\gamma_{3,m,n}^N = & \ts_{3,m-1,n}^N - \ts_{3,m+1,n}^N \notag \\
= & \alpha_{3,m,n}^N + \beta_{3,m,n}^N - (g_{3,m-1,n}^N - g_{3,m+1,n}^N) \dt. \label{3.4+_gamma}
\end{align}
For $i=1,3$, denote
\begin{align}
\Delta_i (\dm_{m,n}^N) & = \Big( | g_{i,m+1,n}^N - g_{i,m-1,n}^N | \Big) \dt, \label{3.5_Del} \\
C_i(\dm_{m,n}^N) & = \frac{1}{2} \Big( |\alpha_{i,m,n}^N| + |\beta_{i,m,n}^N| - |\alpha_{i,m,n}^N + \beta_{i,m,n}^N| \Big) \label{3.6_C}
\end{align}
the interfamily wave influence and intrafamily wave cancellation, respectively.
Then for $i = 1,3$,
\begin{gather}
|\gamma_{i,m,n}^N - (\alpha_{i,m,n}^N + \beta_{i,m,n}^N) | = |g_{i,m+1,n}^N - g_{i,m-1,n}^N| \dt = \Delta_i(\dm_{m,n}^N), \\
|\gamma_{i,m,n}^N| - \big( |\alpha_{i,m,n}^N | + | \beta_{i,m,n}^N| \big) \leq \Delta_i(\dm_{m,n}^N) - 2 C_i(\dm_{m,n}^N)
\end{gather}
and by \eqref{2.8_g}, \eqref{2.8_kmn_pro},
\begin{align}
\sum_{\substack{1\leq m \leq 2^N \\ m+n \text{ even} }} \Delta_1(\dm_{m,n}^N) \leq & \sum_m |K_{{m}}^N| \cdot \TV \hts{3,n} \dt \notag \\
\leq & {\beta} \me \dt \cdot \TV \ts_3^N(\cdot,n\dt-), \label{4.6_1del1}\\
\sum_{\substack{1\leq m \leq 2^N \\ m+n \text{ even} }} \Delta_3(\dm_{m,n}^N) 
\leq & {\beta} \me \dt \cdot \TV \ts_1^N(\cdot,n\dt-). \label{4.6_3del3}
\end{align}
If one denotes further the entering ($E$) and leaving ($L$) rarefaction ($+$) and shock ($-$) $i$-waves ($i=1,3$) of $\dm_{m,n}^N$ respectively as
\begin{align*}
E_i^+(\dm_{m,n}^N) & = \max \{ \alpha_{i,m,n}^N,0 \} + \max \{ \beta_{i,m,n}^N,0 \}, \\
E_i^-(\dm_{m,n}^N) & = \min \{ \alpha_{i,m,n}^N,0 \} + \min \{ \beta_{i,m,n}^N,0 \}, \\
L_i^+(\dm_{m,n}^N) & = \max \{ \gamma_{i,m,n}^N,0 \}, \\
L_i^-(\dm_{m,n}^N) & = \min \{ \gamma_{i,m,n}^N,0 \},
\end{align*}
then it holds approximate conservation laws for $\dm_{m,n}^N$ as
\[
L_i^\pm(\dm_{m,n}^N) = E_i^\pm(\dm_{m,n}^N) \mp C_i(\dm_{m,n}^N) + \delta \Delta_i(\dm_{m,n}^N).
\]
Here and hereafter, $\delta$ is a quantity taking values in $[-1,1]$, which may change its value from line to line.

Similar approximate conservation laws hold for other kinds of domains.
For instance, if $\Lambda^N$ is a domain composed by finitely many mesh diamonds, 
one can denote $E_i(\Lambda^N)$ and $L_i(\Lambda^N)$ as the waves entering $\Lambda^N$ from diamonds that not belonging to $\Lambda$ and the waves leaving $\Lambda^N$ to such diamonds respectively.
Then by adding up the above equations one can get
\[
L_i^\pm(\Lambda^N) = E_i^\pm (\Lambda^N) \mp C_i(\Lambda^N) + \delta \Delta_i(\Lambda^N),
\]
where 
\begin{align*}
C_i(\Lambda^N) \overset{\text{def.}}{=} & \sum_{\substack{m,n: \\
\dm_{m,n}^N \subseteq \Lambda^N}}  C_i(\dm_{m,n}^N), \\
\Delta_i(\Lambda^N) \overset{\text{def.}}{=} & \sum_{\substack{m,n: \\
\dm_{m,n}^N \subseteq \Lambda^N}}  \Delta_i(\dm_{m,n}^N).
\end{align*}

Especially, let $I^N$ be a horizontal interval connecting two mesh points on the line $t = n\dt$ and denote $\Lambda^N(I^N)$ as the union of the diamonds which contains the domain of determinacy of $I^N$ (See Figure~\ref{fig:2+_LamI}).
Since $L_i^+(\Lambda^N(I^N)) \geq 0$,
one has
\[
C_i(\Lambda^N(I^N)) \leq E_i^+(\Lambda^N(I^N)) + \Delta_i(\Lambda^N(I^N)).
\]
One may further denote $X_i^\pm(I^N)$ as the total signed strength of $i$-rarefaction waves/$i$-shocks passing through the horizontal interval $I^N$,
namely, entering from south into the mesh diamonds 
that contains $I^N$.
Due to the C.F.L. condition, all waves entering $\Lambda^N(I^N)$ from outside pass through $I^N$, thus,
\[
C_i(\Lambda^N(I^N)) \leq X_i^+(I^N) + \Delta_i(\Lambda^N(I^N)).
\]
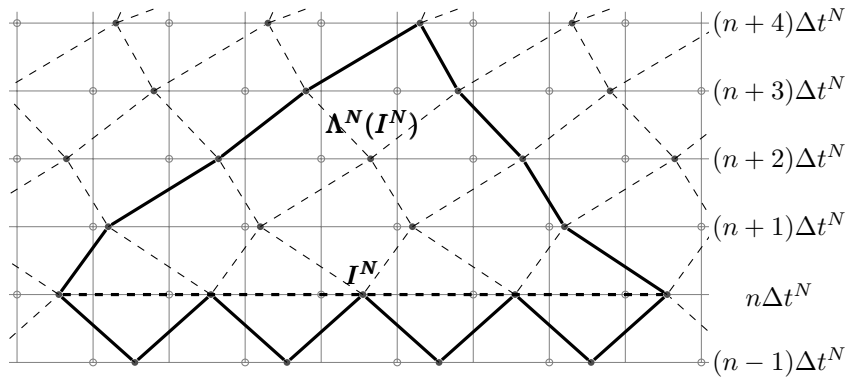
\begin{figure}[htbp]
\small
\centering
\begin{tikzpicture}[scale = .5,>=stealth]

\def\xspace{2}
\def\yspace{1.8}
\def\level{4}
\def\levelplus{5}
\def\waveno{5}
\def\wavenoplus{6}

\begin{scope}
\clip (1.8,-\yspace-0.2) rectangle (\waveno*\xspace*2+0.2,\level*\yspace+0.4);

\draw[help lines, xstep=\xspace ,ystep=\yspace ] (-1,-\yspace) grid (\waveno*\xspace*2+0.4,\level*\yspace+0.3); 

\foreach \x in {0, ..., \wavenoplus}
{
	\node[black!50,draw, inner sep =0.9pt, shape = circle] (O0\x) at  (\x*2*\xspace,-\yspace) {};
	\node[black!50,draw, inner sep =0.9pt, shape = circle] (O1\x) at  (\x*2*\xspace-\xspace,0) {};
	\node[black!50,draw, inner sep =0.9pt, shape = circle] (O2\x) at  (\x*2*\xspace,\yspace) {};
	\node[black!50,draw,inner sep =0.9pt, shape = circle] (O3\x) at  (\x*2*\xspace-\xspace,2*\yspace) {};
	\node[black!50,draw,inner sep =0.9pt, shape = circle] (O4\x) at  (\x*2*\xspace,3*\yspace) {};
	\node[black!50,draw,inner sep =0.9pt, shape = circle] (O5\x) at  (\x*2*\xspace-\xspace,4*\yspace) {};
}

\def\smpzero{-0.45} 
\def\smpone{0.55} 
\def\smptwo{0.2}
\def\smpthree{0.65}
\def\smpfour{-0.2}
\def\smpfive{-0.7}
\def\smpsix{-0.3}
\foreach \x in {0, ..., \wavenoplus}
{
	\node[black!70,fill,inner sep =0.9pt, shape = circle] (Oth0\x) at  (\x*2*\xspace-\xspace+\smpzero*\xspace,-\yspace) {};
	\node[black!70,fill,inner sep =0.9pt, shape = circle] (Oth1\x) at  (\x*2*\xspace-\xspace+\smpone*\xspace,0) {};
	\node[black!70,fill,inner sep =0.9pt, shape = circle] (Oth2\x) at  (\x*2*\xspace+\smptwo*\xspace,\yspace) {};
	\node[black!70,fill,inner sep =0.9pt, shape = circle] (Oth3\x) at  (\x*2*\xspace-\xspace+\smpthree*\xspace,2*\yspace) {};
	\node[black!70,fill,inner sep =0.9pt, shape = circle] (Oth4\x) at  (\x*2*\xspace-\xspace+\smpfour*\xspace,3*\yspace) {};
	\node[black!70,fill,inner sep =0.9pt, shape = circle] (Oth5\x) at  (\x*2*\xspace-\xspace+\smpfive*\xspace,4*\yspace) {};
	\node[black!70,fill,inner sep =0.9pt, shape = circle] (Oth6\x) at  (\x*2*\xspace-\xspace+\smpsix*\xspace,5*\yspace) {};
}

\foreach \x in {0, ..., \wavenoplus}
{
	\draw[dashed] (Oth0\x) -- (Oth1\x) ;
	\draw[dashed] (Oth0\x) ++(2*\xspace,0) -- (Oth1\x);
	\draw[dashed] (Oth1\x) -- (Oth2\x) ;
	\draw[dashed] (Oth1\x) ++(2*\xspace,0) -- (Oth2\x);
	\draw[dashed] (Oth2\x) -- (Oth3\x) ;
	\draw[dashed] (Oth2\x) ++(-2*\xspace,0) -- (Oth3\x);
	\draw[dashed] (Oth3\x) -- (Oth4\x) ;
	\draw[dashed] (Oth3\x) ++(-2*\xspace,0) -- (Oth4\x);
	\draw[dashed] (Oth4\x) -- (Oth5\x) ;
	\draw[dashed] (Oth4\x) ++(-2*\xspace,0) -- (Oth5\x);
	\draw[dashed] (Oth5\x) -- (Oth6\x) ;
	\draw[dashed] (Oth5\x) ++(-2*\xspace,0) -- (Oth6\x);
}
\end{scope}

\draw[black,very thick] (Oth54) -- (Oth43) -- node[right, pos=0.5] {$\qquad  \pmb{\Lambda^N(I^N)}$}  (Oth32) --  (Oth21) -- (Oth11) -- (Oth02) -- (Oth12) -- (Oth03) -- (Oth13) -- (Oth04) -- (Oth14) -- (Oth05) -- (Oth15);
\draw[black,very thick] (Oth54) -- (Oth44) -- (Oth34) -- (Oth24) -- (Oth15);
\draw[black,very thick,dashed] (Oth11) -- node[above,pos=0.5] {$\pmb{I^N}$}  (Oth15);

\node (n1) at (\wavenoplus*\xspace*2-\xspace,-\yspace) {$(n-1) \dt$};
\node (n1) at (\wavenoplus*\xspace*2-\xspace,\yspace*0) {$n \dt$};
\node (n2) at (\wavenoplus*\xspace*2-\xspace,\yspace*1) {$(n+1) \dt$};
\node (n3) at (\wavenoplus*\xspace*2-\xspace,\yspace*2) {$(n+2) \dt$};
\node (n4) at (\wavenoplus*\xspace*2-\xspace,\yspace*3) {$(n+3) \dt$};
\node (n5) at (\wavenoplus*\xspace*2-\xspace,\yspace*4) {$(n+4) \dt$};

\end{tikzpicture}
\caption{$\Lambda^N(I^N)$}
\label{fig:2+_LamI}
\end{figure}

Similarly, for a diamond $\dm_{m,n}^N$ that is cut through by an approximate $1$-characteristic $\chi^N$ into the left part $\dm_{m,n,L}^N$ and the right part $\dm_{m,n,R}^N$ (See Figure~\ref{fig:3_dmcut}),
one has the corresponding approximate conservation laws as 
\begin{equation} \label{4.*1}
L_1^+(\dm_{m,n,\LR}^N) = E_1^+(\dm_{m,n,\LR}^N) - C_1^+(\dm_{m,n,\LR}^N) + \delta \Delta_1(\dm_{m,n,\LR}^N),
\end{equation} 
where $\Delta_1(\dm_{m,n,\LR}) \geq 0$ denotes the amount of $3$-wave influence assigned to $\dm_{m,n,\LR}$, respectively,
and for the $1$-shocks leaving $\dm_{m,n,\LR}^N$ to join the inner boundary $\chi^N$, which is denoted as $S_1(\dm_{m,n,\LR}^N)$, it holds
\begin{equation} \label{4.*2}
S_1(\dm_{m,n,\LR}^N) = E_1^-(\dm_{m,n,\LR}^N) + C_1^-(\dm_{m,n,\LR}^N) + \delta \Delta_1(\dm_{m,n,\LR}).
\end{equation}
Here neither $E_1^-(\dm_{m,n,L}^N)$ nor $E_1^-(\dm_{m,n,R}^N)$ counts the $1$-shock entering $\dm_{m,n}^N$ along the inner boundary $\chi^N$ if any,
and $C_1^\pm(\dm_{m,n,\LR})$ denotes the $1$-waves canceled in the corresponding diamond halves,
for which one can get,
\begin{gather}
C_1^+(\dm_{m,n,L}^N) + C_1^+(\dm_{m,n,R}^N) = C_1(\dm_{m,n}^N), \label{4.**1}\\
C_1^-(\dm_{m,n,L}^N) + C_1^-(\dm_{m,n,R}^N) \leq C_1(\dm_{m,n}^N) \label{4.**2} \\
\Delta_1^+(\dm_{m,n,L}^N) + \Delta_1^+(\dm_{m,n,R}^N) = \Delta_1(\dm_{m,n}^N), \label{4.**3}
\end{gather}
and 
\[
C_1^+(\dm_{m,n,\LR}^N) \leq E_1^+(\dm_{m,n,\LR}^N) + \Delta_1(\dm_{m,n,\LR}^N).
\]
See Appendix C for the details.

Meanwhile, one can calculate the variation of $\ts_1^N$ on both sides of $\chi^N$ in $\dm_{m,n}^N$ as follows (see Figure \ref{fig:3_dmcut}): for $t_1 \in (n\dt,(n+1)\dt)$ and $t_2 \in ((n-1)\dt,n\dt)$, it holds that
\begin{align*}
& |\ts_1^N(\chi^N(t_1)+,t_1) - \ts_1^N(\chi^N(t_2)+,t_2) | \\
= & \Big| \big( \ts_1^N(\chi^N(t_1)+,t_1) - \ts_1^N( (m+1+\vth_n)\dx,n\dt+ ) \big) \\
& - \big( \ts_1^N(\chi^N(t_2)+,t_2) - \ts_1^N( (m+1+\vth_n)\dx,n\dt- ) \big) \Big| \\
= & | - L_1^+(\dm_{m,n,R}^N) + E_1^+(\dm_{m,n,R}^N) + E_1^-(\dm_{m,n,R}^N) | \\
\leq & E_1^+(\dm_{m,n,R}^N) + |E_1^-(\dm_{m,n,R}^N)| + 2 \Delta_1(\dm_{m,n,R}^N).
\end{align*}
Similarly,
\[
|\ts_1^N(\chi^N(t_1)-,t_1) - \ts_1^N(\chi^N(t_2)-,t_2) | \\
\leq E_1^+(\dm_{m,n,L}^N) + |E_1^-(\dm_{m,n,L}^N)| + 2 \Delta_1(\dm_{m,n,L}^N).
\]
\begin{figure}[htbp]
\small
\centering
\begin{tikzpicture}[scale = .5]
    \fill (0,0) circle (2pt) node[below] {$(m-1)\dx$};
    \fill (4,0) circle (2pt) node[below] {$m\dx$};
    \fill (8,0) circle (2pt) node[below] {$(m+1)\dx$} node[right] {$\quad 	(n-1) \dt$};
    \fill (8,3) circle (2pt) node[right] {$\quad \quad n\dt$};
    \fill (8,6) circle (2pt) node[right] {$\quad (n+1)\dt$};
    \fill (4,3) circle (2pt) (4,6) circle (2pt) (0,6) circle (2pt) (0,3) circle (2pt);

    \coordinate (S1) at (6,0);
    \coordinate (S2) at (0.8,3);
    \coordinate (S3) at (5.2,6);
    \coordinate (S4) at (8.8,3);
    \fill (S1) circle (2pt) (S2) circle (2pt) (S3) circle (2pt) (S4) circle (2pt);
    \draw (S1) -- (S2) -- (S3) -- (S4) -- cycle;

    \draw[black, very thick] (0,0) -- (3,3) node[pos=.5, left] {$\chi^N$}  (4,3) -- (7,6);
    \draw[black, very thick, dashed] (3,3) -- (4,3);
    
    \draw[black, dashed] (0,0) -- (2.4,3) (0,0) -- (2.6,3) (0,0) -- (2.8,3) (0,0) -- (3.3,3) (0,0) -- (3.6,3);
    \draw[black, dashed] (4,3) -- (7.3,6) (4,3) -- (6.4,6) (4,3) -- (6.6,6) (4,3) -- (6.8,6);
    \draw[black] (8,0) -- (8.6,3);

    \node (L) at (3,3.5) {$\diamondsuit_{m,n,L}^N$};
    \node (R) at (5,2) {$\diamondsuit_{m,n,R}^N$};
\end{tikzpicture}
~~~~
\begin{tikzpicture}[scale = .5]
    \fill (0,0) circle (2pt) node[below] {$(m-1)\dx$};
    \fill (4,0) circle (2pt) node[below] {$m\dx$};
    \fill (8,0) circle (2pt) node[below] {$(m+1)\dx$} node[right] {$\quad 	(n-1) \dt$};
    \fill (8,3) circle (2pt) node[right] {$\quad \quad n\dt$};
    \fill (8,6) circle (2pt) node[right] {$\quad (n+1)\dt$};
    \fill (4,3) circle (2pt) (4,6) circle (2pt) (0,6) circle (2pt) (0,3) circle (2pt);

    \coordinate (S1) at (4.4,0);
    \coordinate (S2) at (0.8,3);
    \coordinate (S3) at (3.2,6);
    \coordinate (S4) at (8.8,3);
    \fill (S1) circle (2pt) (S2) circle (2pt) (S3) circle (2pt) (S4) circle (2pt);
    \draw (S1) -- (S2) -- (S3) -- (S4) -- cycle;

    \draw[black, very thick] (8,0) -- (8.6,3) node[pos=.5, right] {$\chi^N$}  (4,3) -- (7.3,6);
    \draw[black, very thick, dashed] (8.6,3) -- (4,3);

    \draw[black, dashed] (0,0) -- (3,3) (0,0) -- (2.4,3) (0,0) -- (2.6,3) (0,0) -- (2.8,3) (0,0) -- (3.3,3) (0,0) -- (3.6,3);
    \draw[black, dashed] (4,3) -- (7,6) (4,3) -- (6.4,6) (4,3) -- (6.6,6) (4,3) -- (6.8,6);

    \node (L) at (3,3.5) {$\diamondsuit_{m,n,L}^N$};
    \node (R) at (5,2) {$\diamondsuit_{m,n,R}^N$};
\end{tikzpicture}
\caption{A diamond $\diamondsuit_{m,n}^N$ cut by an approximate 1-characteristic $\chi^N$}
\label{fig:3_dmcut}
\end{figure}
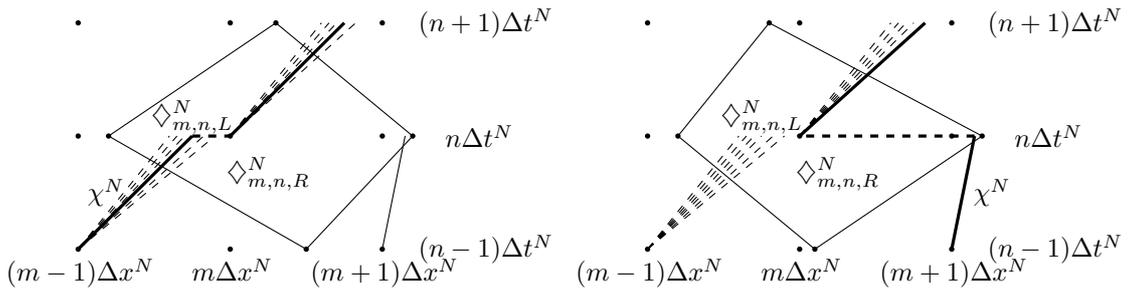

Adding up the above estimates, one can get the approximate conservation laws for the domain on one side of an approximate $1$-characteristic as follows.
For the part of an approximate $1$-characteristic $\chi^N$, 
that initiates from a diamond center $p_i^N$ of $\dm_{m_i,n_i}^N$, 
and finishes at another diamond center $p_f^N$ of $\dm_{m_f,n_f}^N$,
one can denote $\phi_R^N$ as the mesh curve composed of successive northeast edges of mesh diamonds from $\dm_{m_f,n_f}^N$ downwards to $\dm_{m_R,n_i}^N$  for some $m_R$, 
and denote $q_R^N$ as the ending points of $\phi_R^N$, 
namely the east mesh point of $\dm_{m_R,n_i}^N$,
denote $I_R^N$ as the horizontal interval connecting $p_i^N$ and $q_R^N$, with $\Lambda_R(\chi^N)$ as the domain surrounded by $\chi^N, \phi_R^N$ and south edges corresponding to $I_R^N$ (See Figure~\ref{fig:4_domain_cut}).
Then by adding up the approximate conservation laws of whole diamonds and diamond halves in $\Lambda_R(\chi^N)$, and due to the C.F.L. condition,
one can get
\begin{align*}
L_1^+(\Lambda_R(\chi^N)) = & X_1^+(I_R^N) - C_1^+(\Lambda_R(\chi^N)) + \delta \Delta_1(\Lambda_R(\chi^N)), \\
S_1(\Lambda_R(\chi^N)) + L_1^-(\Lambda_R(\chi^N)) = & X_1^-(I_R^N) + C_1^-(\Lambda_R(\chi^N)) + \delta \Delta_1(\Lambda_R(\chi^N)).
\end{align*}
Then obviously,
\begin{align*}
L_1^+(\Lambda_R(\chi^N)) \leq & X_1^+(I_R^N) + \Delta_1(\Lambda_R(\chi^N)),\\
|S_1(\Lambda_R(\chi^N))| \leq & |X_1^-(I_R^N)| + \Delta_1(\Lambda_R(\chi^N)).
\end{align*}
Similarly, one can construct the left side domain $\Lambda_L(\chi^N)$ and get the corresponding approximate conservation laws
\begin{align*}
L_1^+(\Lambda_L(\chi^N)) \leq & X_1^+(I_L^N) + \Delta_1(\Lambda_L(\chi^N)),\\
|S_1(\Lambda_L(\chi^N))| \leq & |X_1^-(I_L^N)| +\Delta_1(\Lambda_L(\chi^N)).
\end{align*}
Here $E_1^\pm(\Lambda_\LR(\chi^N)), L_1^\pm(\Lambda_\LR(\chi^N))$ denote the waves entering and leaving $\Lambda_\LR(\chi^N)$, $S_1(\Lambda_\LR(\chi^N))$ denotes the waves entering the boundary at the approximate characteristic $\chi^N$,
$C_1^\pm (\Lambda_\LR(\chi^N)), \Delta_1^\pm (\Lambda_\LR(\chi^N)) $ are the total amount of the corresponding values.
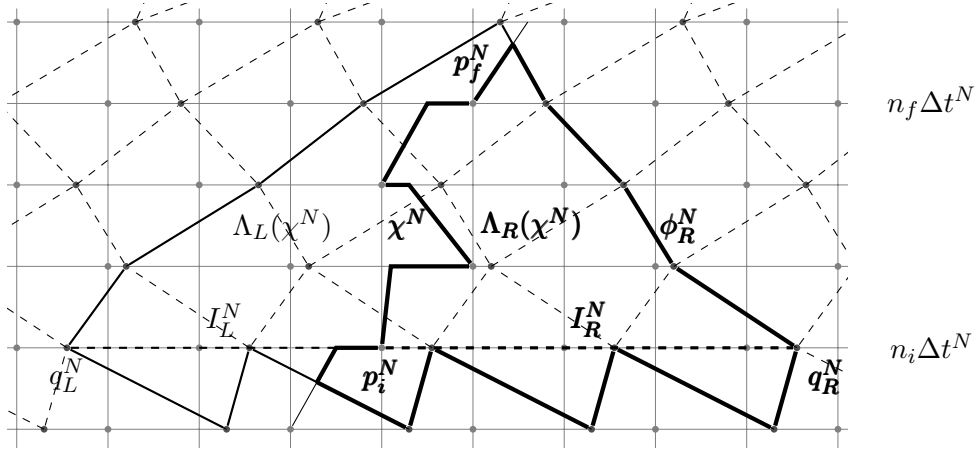
\begin{figure}[htbp]
\centering
\begin{tikzpicture}[scale = .6,>=stealth]

\def\xspace{2}
\def\yspace{1.8}
\def\level{4}
\def\levelplus{5}
\def\waveno{5}
\def\wavenoplus{6}

\begin{scope}
\clip (1.8,-2.2) rectangle (\waveno*\xspace*2+0.2,\level*\yspace+0.4);

\draw[help lines, xstep=\xspace ,ystep=\yspace ] (-1,-3) grid (\waveno*\xspace*2+0.4,\level*\yspace+0.3); 

\foreach \x in {0, ..., \wavenoplus}
{
	\node[black!50,fill,inner sep =0.9pt, shape = circle] (O0\x) at  (\x*2*\xspace,-\yspace) {};
	\node[black!50,fill,inner sep =0.9pt, shape = circle] (O1\x) at  (\x*2*\xspace-\xspace,0) {};
	\node[black!50,fill,inner sep =0.9pt, shape = circle] (O2\x) at  (\x*2*\xspace,\yspace) {};
	\node[black!50,fill,inner sep =0.9pt, shape = circle] (O3\x) at  (\x*2*\xspace-\xspace,2*\yspace) {};
	\node[black!50,fill,inner sep =0.9pt, shape = circle] (O4\x) at  (\x*2*\xspace,3*\yspace) {};
	\node[black!50,fill,inner sep =0.9pt, shape = circle] (O5\x) at  (\x*2*\xspace-\xspace,4*\yspace) {};
}

\def\smpzero{0.3}
\def\smpone{0.55} 
\def\smptwo{0.2}
\def\smpthree{0.65}
\def\smpfour{-0.2}
\def\smpfive{-0.7}
\def\smpsix{-0.3}
\foreach \x in {0, ..., \wavenoplus}
{
	\node[black!70,fill,inner sep =0.9pt, shape = circle] (Oth0\x) at  (\x*2*\xspace-\xspace+\smpzero*\xspace,-\yspace) {};
	\node[black!70,fill,inner sep =0.9pt, shape = circle] (Oth1\x) at  (\x*2*\xspace-\xspace+\smpone*\xspace,0) {};
	\node[black!70,fill,inner sep =0.9pt, shape = circle] (Oth2\x) at  (\x*2*\xspace+\smptwo*\xspace,\yspace) {};
	\node[black!70,fill,inner sep =0.9pt, shape = circle] (Oth3\x) at  (\x*2*\xspace-\xspace+\smpthree*\xspace,2*\yspace) {};
	\node[black!70,fill,inner sep =0.9pt, shape = circle] (Oth4\x) at  (\x*2*\xspace-\xspace+\smpfour*\xspace,3*\yspace) {};
	\node[black!70,fill,inner sep =0.9pt, shape = circle] (Oth5\x) at  (\x*2*\xspace-\xspace+\smpfive*\xspace,4*\yspace) {};
	\node[black!70,fill,inner sep =0.9pt, shape = circle] (Oth6\x) at  (\x*2*\xspace-\xspace+\smpsix*\xspace,5*\yspace) {};
}

\foreach \x in {0, ..., \wavenoplus}
{
	\draw[dashed] (Oth0\x) -- (Oth1\x) ;
	\draw[dashed] (Oth0\x) ++(2*\xspace,0) -- (Oth1\x);
	\draw[dashed] (Oth1\x) -- (Oth2\x) ;
	\draw[dashed] (Oth1\x) ++(2*\xspace,0) -- (Oth2\x);
	\draw[dashed] (Oth2\x) -- (Oth3\x) ;
	\draw[dashed] (Oth2\x) ++(-2*\xspace,0) -- (Oth3\x);
	\draw[dashed] (Oth3\x) -- (Oth4\x) ;
	\draw[dashed] (Oth3\x) ++(-2*\xspace,0) -- (Oth4\x);
	\draw[dashed] (Oth4\x) -- (Oth5\x) ;
	\draw[dashed] (Oth4\x) ++(-2*\xspace,0) -- (Oth5\x);
	\draw[dashed] (Oth5\x) -- (Oth6\x) ;
	\draw[dashed] (Oth5\x) ++(-2*\xspace,0) -- (Oth6\x);
}

\end{scope}


\coordinate (P1) at (6.6*\xspace,4*\yspace);

\path [name path = curve 1] (Oth54) -- (Oth44);
\path [name path = curve 2] (P1)-- (O43);

\path [name path = curve 3] (Oth03) -- (Oth12);
\path [name path = curve 4] (O13) -- ++(-0.5*\xspace,0) -- (O02);


	
\fill [name intersections = {of = curve 1 and curve 2, by={H}}] 
			(H) circle (1pt) ;
\fill [name intersections = {of = curve 3 and curve 4, by={T}}] 
			(T) circle (1pt) ;

\draw[black,thick] (H) -- (Oth54) -- (Oth43) --  (Oth32) -- node[right,pos=0.5]{$\quad {\Lambda_L(\chi^N)}$} (Oth21) -- (Oth11) node[below] {$q_L^N$} -- (Oth02) -- (Oth12) -- (T);
\draw[black,thick,dashed] (Oth11) -- node[above,pos=0.5] {$I_L^N$} (O13);
\draw[black,ultra thick] (H) -- (Oth44) -- (Oth34) -- node[right,pos=0.5] {$\pmb{\phi_R^N}$} (Oth24) -- (Oth15) node[below right] {$\pmb{q_{R}^N}$} -- (Oth05) -- (Oth14) -- (Oth04) -- (Oth13) -- (Oth03) -- (T);
\draw[black,ultra thick] (H) -- (O43) -- ++(-0.5*\xspace,0) -- (O33) -- ++(0.3*\xspace,0) -- node[left,pos=0.5] {$\pmb{\chi^N}$} node[right,pos=0.5] {  $\quad \pmb{\Lambda_R(\chi^N)}$} (O23) -- ++(-0.9*\xspace,0) -- (O13) node[below] {$\pmb{p_i^N}$} -- ++(-0.5*\xspace,0) -- (T) ;
\draw[black]  (T) -- (O02) (H) -- (P1);
\draw[black,very thick, dashed] (O13) -- node[above,pos=0.5] {$\pmb{I_R^N}$} (Oth15) ;

\node[label=above:{$\pmb{p_f^N}$}] (P) at (O43) {};

\node (n1) at (\wavenoplus*\xspace*2-\xspace,\yspace*0) {$n_i \dt$};
\node (n4) at (\wavenoplus*\xspace*2-\xspace,\yspace*3) {$n_f \dt$};

\end{tikzpicture}
\caption{$\Lambda_R(\chi^N)$ and $\Lambda_L(\chi^N)$}
\label{fig:4_domain_cut}
\end{figure}


Meanwhile, the total variation of $\ts_1^N$ on the right side of $\chi^N$ can be estimated as 
\[
\TVchi{+} \ts_1^N \leq X_1^+(I_R^N) + |X_1^-(I_R^N)| + 2 |\Delta_1(\Lambda_R(\chi^N))|,
\] 
and for the left side
\[
\TVchi{-} \ts_1^N \leq X_1^+(I_L^N) + |X_1^-(I_L^N)| + 2 |\Delta_1(\Lambda_L(\chi^N))|,
\] 
Then for the total variation of the speed of $\chi^N$, it holds
\begin{align*}
\TVchi{} \dot{\chi}^N \leq & \frac{1}{2} \Big( \TVchi{+} \ts_1^N +  \TVchi{-} \ts_1^N\Big) \\
\leq & \frac{1}{2} \Big( X_1^+(I_R^N \cup I_L^N) + |X_1^-(I_R^N \cup I_L^N)| \Big) +  |\Delta_1(\Lambda_R(\chi^N) \cup \Lambda_L(\chi^N))|.
\end{align*}

Since the domain under study is covered by the diamonds centering in $[T_0,T_0+T_*] \times [0,1]$ with $T_* \leq 60 / \mb$,
if one chooses $\kappa = [2 \Lambda T_*] + 2$, where $\Lambda$ is the one for C.F.L. condition, and
\[
I^* = \{ (x,T_0) \mid x \in [-\kappa,\kappa)  \}
\]
then each above domain locates in the determinacy domain of $I^*$, moreover,
\[
| X_i^\pm (I^*) | = \kappa \TV \ts_i^N(T_0),
\]
which is bounded.
Meanwhile, by \eqref{4.6_1del1}--\eqref{4.6_3del3}, and \eqref{4.4_tv_bdd}, \eqref{1.10_mb},
for each domain $\Omega$ locates in the determinacy domain of $I^*$, 
\[
\Delta_1(\Omega) + \Delta_3(\Omega) \leq \frac{\kappa}{5} \mb,
\]
which is bounded.
Using the approximate conservation laws given above, all listed quantities, such as $C_1(\Omega), S(\chi^N), \TVchi{} \ts_i^N, \TVchi{} \dot{\chi}^N$, are bounded.

\subsection{Estimates for the exact solution} \label{ssc:4.2}

With above estimates, 
one may pass to the limit to the exact solution.
And during this process, a subsequence of approximate solutions could be selected to satisfy the properties as follows.

\begin{lem} \label{lem:4.2}
For a sequence of approximate $1$-characteristics $\{\chi^N\}$, 
if there is a sequence of time $\{ t^N \}$ such that $(\chi^N(t^N),t^N)$ converges to some point $(x_0,t_0)$,
then it possess a convergent subsequence
\[
\chi^N(t) \to \chi(t)
\]
uniformly in time $t \in [T_0,T_0+T_*]$.
Moreover, the limit $\chi(t)$ is Lipschitz continuous.
\end{lem}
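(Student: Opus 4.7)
The plan is a straightforward Arzelà–Ascoli argument, made possible by the uniform C.F.L. speed bound together with the single anchoring assumption $(\chi^N(t^N),t^N) \to (x_0,t_0)$. I would first verify equi-Lipschitz continuity of the family $\{\chi^N\}$ on $[T_0, T_0+T_*]$, then use the anchor to upgrade to uniform boundedness, and finally pass to a uniformly convergent subsequence whose limit inherits the Lipschitz bound.

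For the equi-Lipschitz estimate, recall that by definition an approximate $1$-characteristic is a concatenation of segments, each of which is either a classical $1$-characteristic or a $1$-shock of the Burgers equation $\pt \ts_1^N + \tfrac{\alpha}{2} \px (\ts_1^N)^2 = 0$ (with continuations chosen at diamond centers according to the recipe in Appendix C). On any such segment the slope equals either $\alpha \ts_1^N$ or the Rankine–Hugoniot speed $\tfrac{\alpha}{2}(\ts_1^{N,-}+\ts_1^{N,+})$, so in any case
\begin{equation*}
|\dot\chi^N(t)| \leq \alpha \, \| \ts_1^N(\cdot,t) \|_{L^\infty} \leq \Lambda,
\end{equation*}
where the last inequality is the C.F.L. condition, valid uniformly in $N$ thanks to the $L^\infty$ bound \eqref{2.22_gr_linf} from Proposition \ref{prop:growth} and the choice \eqref{2.6_Lam} of $\Lambda$. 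Consequently $|\chi^N(t_1)-\chi^N(t_2)| \leq \Lambda |t_1-t_2|$ for all $t_1, t_2 \in [T_0, T_0+T_*]$ and all $N$.

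Next, the anchoring hypothesis combined with this Lipschitz bound gives $|\chi^N(t_0)-x_0| \leq |\chi^N(t_0)-\chi^N(t^N)| + |\chi^N(t^N)-x_0| \leq \Lambda |t_0-t^N| + o(1) \to 0$, so $\chi^N(t_0) \to x_0$. Combined with the uniform Lipschitz bound, this yields a uniform sup-bound for $\{\chi^N\}$ on $[T_0, T_0+T_*]$. Arzelà–Ascoli then extracts a subsequence (still denoted $\chi^N$) converging uniformly to some $\chi$, and passing to the limit in the inequality $|\chi^N(t_1)-\chi^N(t_2)| \leq \Lambda |t_1-t_2|$ shows that $\chi$ is itself Lipschitz with constant $\Lambda$. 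There is no real obstacle here; the only point requiring care is that the $L^\infty$ bound controlling $\dot\chi^N$ be genuinely uniform in $N$ over the whole interval $[T_0, T_0+T_*]$, which is precisely what Proposition \ref{prop:growth} delivers (one could, if desired, strengthen the conclusion by invoking the bound on $\TVchi{}\dot\chi^N$ established in the previous subsection together with Helly's theorem, so that $\dot\chi^N$ converges pointwise a.e. to a BV function, but this is not needed for the Lipschitz statement).
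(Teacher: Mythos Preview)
Your approach via the C.F.L. speed bound and Arzel\`a--Ascoli is exactly the paper's strategy, but you have overlooked one technical wrinkle: the approximate characteristic $\chi^N$ is \emph{not} continuous. By construction (see the description in Section~4.1 and Figure~\ref{fig:3_dmcut}), each segment of $\chi^N$ runs on a strip $t\in(n\dt,(n+1)\dt)$, ends at some point $\chi^N(n\dt-)$, and then its continuation restarts from the \emph{diamond center} $\chi^N(n\dt+)=m\dx$; these two values do not coincide in general. Hence your clean inequality $|\chi^N(t_1)-\chi^N(t_2)|\le\Lambda|t_1-t_2|$ fails (take $t_1=n\dt-$, $t_2=n\dt+$).

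The paper repairs this by observing that consecutive diamond centers satisfy $|\chi^N(n\dt+)-\chi^N((n+1)\dt+)|=\dx$, which combined with the within-strip bound $|\dot\chi^N|\le\alpha\|\ts_1^N\|_{L^\infty}$ gives
\[
|\chi^N(t_1)-\chi^N(t_2)|\le\Lambda|t_1-t_2|+\dx,\qquad t_1,t_2\in[T_0,T_0+T_*].
\]
Since $\dx\to 0$, the compactness argument (phrased in the paper as diagonal selection over rational times rather than Arzel\`a--Ascoli proper) still goes through, and passing to the limit yields the Lipschitz constant $\Lambda$ for $\chi$. The fix is minor, but the jump structure is not cosmetic: it is used explicitly again in the proof of Lemma~\ref{lem:4.3}, where one counts how often the random sampling sends the continuation to the right versus the left.
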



\begin{lem} \label{lem:4.3}
For $\chi$ as above,
if it holds 
\[
a \leq \dot{\chi}^N \leq b,
\]
for large enough $N$, then
\[
a \leq \dot{\chi} \leq b.
\]
Moreover, 
\[
\lim_{N} \dot{\chi}^N(t) = \dot{\chi}(t)
\]
at all but a countable set of $t$ for a further subsequence.
\end{lem}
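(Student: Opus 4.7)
The plan is to prove this by applying Helly's selection theorem to the sequence $\{\dot{\chi}^N\}$ on $[T_0,T_0+T_*]$. Each $\dot{\chi}^N$ is a piecewise constant function bounded between $a$ and $b$, and the analysis at the end of Subsection 4.1 (specifically the bound on $\TVchi{}\dot{\chi}^N$ in terms of $X_1^\pm(I^*)$ and $\Delta_1$, both of which are uniformly bounded) provides a uniform total variation bound on $\dot{\chi}^N$. Writing $\dot{\chi}^N = f_N - g_N$ as the difference of two uniformly bounded monotone functions, Helly's principle extracts a further subsequence (still denoted by $\chi^N$) and a BV function $v$ on $[T_0,T_0+T_*]$ such that $\dot{\chi}^N(t) \to v(t)$ for \emph{every} $t$. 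Passing the pointwise inequality $a \leq \dot{\chi}^N(t) \leq b$ to the limit immediately yields $a \leq v(t) \leq b$ everywhere.

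The next step is to identify $v$ with $\dot{\chi}$. By Lemma \ref{lem:4.2}, $\chi^N \to \chi$ uniformly on $[T_0,T_0+T_*]$. Combined with the dominated convergence theorem applied to $\dot{\chi}^N \to v$ (which are all bounded in absolute value by $\max(|a|,|b|)$), one obtains, for every $t \in [T_0,T_0+T_*]$,
\[
\chi(t) - \chi(T_0) = \lim_{N \to \infty} \bigl( \chi^N(t) - \chi^N(T_0) \bigr) = \lim_{N \to \infty} \int_{T_0}^{t} \dot{\chi}^N(s) \,\rmd s = \int_{T_0}^{t} v(s) \,\rmd s.
\]
Since $v$ is of bounded variation, it is continuous except on an at most countable set $D \subseteq [T_0,T_0+T_*]$. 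At each $t \notin D$, the fundamental theorem of calculus applied to the above identity shows that $\chi$ is differentiable at $t$ with $\dot{\chi}(t) = v(t)$, from which $a \leq \dot{\chi}(t) \leq b$ and $\lim_N \dot{\chi}^N(t) = \dot{\chi}(t)$ follow simultaneously.

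The only point requiring care is the interpretation of the final conclusion: a priori $\dot{\chi}$ is defined only almost everywhere (as $\chi$ is merely Lipschitz by Lemma \ref{lem:4.2}), so one should identify $\dot{\chi}$ with the specific BV representative $v$ obtained by the Helly extraction; then the pointwise statement $\lim_N \dot{\chi}^N(t) = \dot{\chi}(t)$ holds at every continuity point of $v$, i.e.\ on the complement of the countable set $D$. No serious obstacle is expected: the crucial uniform BV bound on the speeds is already in hand from Subsection 4.1, and the whole argument merely combines Helly's principle with the fundamental theorem of calculus for BV functions, ensuring that the subsequence extraction done here is compatible with the one carried out in Lemma \ref{lem:4.2} by performing them in order.
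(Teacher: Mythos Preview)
Your argument has a genuine gap at the step
\[
\chi^N(t)-\chi^N(T_0)=\int_{T_0}^{t}\dot{\chi}^N(s)\,\rmd s.
\]
This identity is \emph{false}: by construction the approximate characteristic $\chi^N$ is a concatenation of straight line segments, each starting at a diamond center $(m\dx,n\dt)$ and ending somewhere in $((m-1)\dx,(m+1)\dx)$ at time $(n+1)\dt-$, after which it \emph{jumps} to the center of the diamond containing its endpoint, i.e.\ to $(m\pm1)\dx$. Thus over one time step the displacement between diamond centers is exactly $\pm\dx$, while $\int_{n\dt}^{(n+1)\dt}\dot{\chi}^N=\dot{\chi}^N\dt$. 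The sum of the jump corrections over $[T_0,t]$ is of order $(t-T_0)\Lambda$, not $o(1)$, so it cannot simply be ignored.

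Whether the sum of jumps tends to zero is precisely the content that the paper's proof supplies, and it hinges on the random sampling: the sign of each jump is $+$ iff $\vth_n<\dot{\chi}^N/\Lambda$, so the fraction of right jumps is governed by the equidistribution of $\{\vth_n\}$ on $[-1,1)$. The paper counts, under the hypothesis $\dot{\chi}^N\ge a$, a lower bound $S^N(a)$ on the number of right jumps, and passes to the limit using the law of large numbers to obtain $(\chi(t_2)-\chi(t_1))/(t_2-t_1)\ge a$. Without this probabilistic ingredient you cannot identify $v$ with $\dot{\chi}$, and hence neither the bound $a\le\dot{\chi}\le b$ nor the pointwise convergence $\dot{\chi}^N\to\dot{\chi}$ follows from your Helly argument. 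Your use of Helly for the ``moreover'' part is fine and matches the paper; what is missing is exactly the random-walk step that connects the segment speeds $\dot{\chi}^N$ to the actual displacement of $\chi^N$.
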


\begin{lem} \label{lem:4.4}
For $\chi$ as above, there exists a further subsequence such that $(\ts_1^N,\ts_3^N)^T$ are one-sided equicontinuous on both sides of $\chi^N$ except for a countable set of $t$, and it holds that
\[
\lim_N \ts_1^N(\chi^N(t)\pm 0,t) = \ts_1(\chi(t)\pm 0,t).
\]
\end{lem}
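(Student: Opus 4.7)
The plan is three-fold: (i) derive uniform-in-$N$ BV bounds on $t \mapsto \ts_i^N(\chi^N(t) \pm 0, t)$ for $i=1,3$ using the approximate conservation laws of Section 4.1; (ii) apply Helly's selection principle to extract a further subsequence converging pointwise off a common countable exceptional set; and (iii) identify the pointwise limit with $\ts_i(\chi(t) \pm 0, t)$ by combining the $L^1$ convergence $\ts_i^N \to \ts_i$ of Proposition \ref{prop:cons} with the uniform convergence $\chi^N \to \chi$ provided by Lemma \ref{lem:4.2}.

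For (i), I would fix $T_0 \leq t_1 < t_2 \leq T_0+T_*$ and sum the one-diamond estimate already recorded in Section 4.1,
$$|\ts_1^N(\chi^N(t_b)+0,t_b) - \ts_1^N(\chi^N(t_a)+0,t_a)| \leq E_1^+(\dm_{m,n,R}^N) + |E_1^-(\dm_{m,n,R}^N)| + 2 \Delta_1(\dm_{m,n,R}^N),$$
over the finitely many diamonds $\dm_{m,n}^N$ that $\chi^N$ crosses in $(t_1,t_2)$. By the approximate conservation laws for the restriction of $\Lambda_R(\chi^N)$ to that time slab, the sum is bounded by the total $1$-wave strength passing through a horizontal interval of size $O(t_2-t_1+\dx)$ plus the interaction quantity $\Delta_1$ in the slab, both of which are controlled uniformly in $N$ via the global bounds on $X_1^\pm(I^*)$ and $\Delta_1(\Omega)$ noted at the end of Section 4.1. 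This gives a uniform-in-$N$ bound on the total variation of $t \mapsto \ts_1^N(\chi^N(t)+0, t)$ on $[T_0, T_0+T_*]$; the same argument treats the $(-)$ side and the two traces of $\ts_3^N$.

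For (ii), Helly's theorem applied four times produces a single subsequence along which each of the four trace functions converges pointwise to a BV function $f_i^\pm(t)$ with at most countable discontinuity set; the union of these four sets forms the countable exceptional set of the lemma, and away from it the family is equicontinuous in the usual sense. For (iii), pick $t$ outside this exceptional set and outside the (null) set of $\tau$ at which $\ts_1(\cdot,\tau)$ lacks a right trace at $\chi(\tau)$. The equicontinuity from (i) yields, for each $\varepsilon>0$, some $N_0$ and $\eta>0$ with $|\ts_1^N(\chi^N(\tau)+0,\tau) - f_1^+(t)| < \varepsilon$ for all $N \geq N_0$ and all $\tau \in (t,t+\eta)$. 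Averaging this inequality over thin rectangles $[\chi^N(\tau),\chi^N(\tau)+\delta] \times (t,t+\eta)$ and invoking uniform convergence $\chi^N\to\chi$ together with $L^1$ convergence of $\ts_1^N$ lets one pass the limit in $N$ inside the double integral; sending $\delta\to 0^+$ and then $\varepsilon\to 0^+$ identifies $f_1^+(t) = \ts_1(\chi(t)+0,t)$.

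The main obstacle is step (iii): since $\chi^N\to\chi$ is only uniform (no derivative convergence) and $\ts_1$ is merely BV in $x$ with $L^1_{\mathrm{loc}}$ values in $t$, one cannot take a naive pointwise limit at the moving point $\chi^N(t)$. The safe route is the double-averaging scheme above, whose legitimacy hinges on the uniform one-sided equicontinuity established in (i) to decouple the spatial and $N$-limits.
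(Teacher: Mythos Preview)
Your proposal has a genuine gap: you have conflated two different kinds of equicontinuity. Step (i), exactly as you describe it, gives a uniform bound on the total variation of the \emph{trace function} $t \mapsto \ts_1^N(\chi^N(t)+0,t)$, i.e.\ equicontinuity in $t$. But the lemma asserts \emph{spatial} one-sided equicontinuity: for each good $t_0$ and each $\varepsilon>0$ there is $\delta>0$ so that $|\ts_1^N(\chi^N(t_0)+0,t_0)-\ts_1^N(\chi^N(t_0)+y,t_0)|<\varepsilon$ for all $0<y<\delta$ and all large $N$. This is precisely what your averaging scheme in step (iii) needs and silently assumes: to compare $f_1^+(t)$ with $\frac{1}{\delta\eta}\int\!\!\int \ts_1^N(x,\tau)\,\rmd x\,\rmd\tau$ you must bound $|\ts_1^N(x,\tau)-\ts_1^N(\chi^N(\tau)+0,\tau)|$ for $x$ in a $\delta$-interval, which is the spatial oscillation of $\ts_1^N$ to the right of $\chi^N$. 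The temporal BV of the trace says nothing about this quantity; a large $1$-rarefaction fan or a strong shock sitting just to the right of $\chi^N(t_0)$ at a single time is compatible with your step (i) but destroys the averaging argument.

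The paper's proof of this lemma (Appendix~B) is substantially more delicate than your outline, and this is where the real work lies. One introduces an additional quadratic shock-collision measure $Q_1^N(\dm_{m,n}^N)=\max\{0,-\alpha_{1,m,n}^N\}\cdot\max\{0,-\beta_{1,m,n}^N\}$, passes to weak-$*$ limits of $\rmd C_1^N,\rmd\Delta_1^N,\rmd S^N,\rmd Q_1^N$, and restricts to $t_0$ at which none of the limit measures has an atom at $(\chi(t_0),t_0)$. In a small neighborhood $V(\gamma)$ one then performs a dichotomy: either (A) there is a $1$-shock of strength $\geq\gamma$ near $(\chi^N(t_0),t_0)$, in which case a coalescence argument forces that shock to lie on $\chi^N$ itself and all nearby waves to be $O(\gamma^3)$; or (B) all $1$-shocks in $V(\gamma)$ are weaker than $\gamma$, in which case a collision-counting argument bounds the spatial oscillation by $O(\gamma)$. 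In both cases one obtains the required uniform-in-$N$ spatial one-sided equicontinuity. Only after this is established does the paper invoke the temporal BV bound you cite, together with Helly and $L^1$ convergence, to identify the limit trace (this is the Corollary in the proof of Lemma~\ref{lem:4.5}). Your steps (ii) and (iii) correspond to that final corollary, but the hard core of Lemma~\ref{lem:4.4} is missing from your plan.
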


\begin{lem} \label{lem:4.5}
Except for a countable set of $t$, it holds that either
\[
\dot{\chi}(t) = \frac{\alpha [\frac{1}{2}\ts_1^2]}{[\ts_1]}\Big|_{(\chi(t),t)} = \frac{\alpha}{2} \big( \ts_1(\chi(t)+,t) + \ts_1(\chi(t)-,t) \big)
\]
or 
\[
\dot{\chi}(t) = \alpha \ts_1(\chi(t),t).
\]
\end{lem}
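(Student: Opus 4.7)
The strategy is to exploit the discrete structure of $\chi^N$: at every time $t$ where it is differentiable, the segment of $\chi^N$ through $(\chi^N(t),t)$ lies in a single mesh diamond, where by the structure of the Burgers Riemann solver it is either (i) a genuine $1$-shock, in which case $\dot\chi^N(t)$ equals the Rankine--Hugoniot speed $\frac{\alpha}{2}(\ts_1^N(\chi^N(t)+,t)+\ts_1^N(\chi^N(t)-,t))$ and the two one-sided traces are distinct, or (ii) a classical $1$-characteristic lying either in a constant state or inside a centered rarefaction fan (by the selection rule recalled in Appendix~C), in which case $\ts_1^N$ is continuous across $\chi^N$ at $(\chi^N(t),t)$ and $\dot\chi^N(t)=\alpha \ts_1^N(\chi^N(t),t)$. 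Thus the two expressions in the statement already hold at the approximate level, and it remains to pass to the limit.

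Fix $t\in(T_0,T_0+T_*)$ outside the countable exceptional set obtained by unioning the two exceptional sets of Lemmas~\ref{lem:4.3}--\ref{lem:4.4} with the countable collection of breakpoints $\{n\dt : n,N\in\mbn\}$ of all the $\chi^N$. For such $t$, $\dot\chi^N(t)$ is well defined for every sufficiently large $N$, $\dot\chi^N(t)\to\dot\chi(t)$, and $\ts_1^N(\chi^N(t)\pm,t)\to\ts_1(\chi(t)\pm,t)$. If $\ts_1(\chi(t)+,t)=\ts_1(\chi(t)-,t)=:\bar\ts$, then in either alternative (i) or (ii) the approximate speed converges to $\alpha\bar\ts$, so $\dot\chi(t)=\alpha\ts_1(\chi(t),t)$, and this value simultaneously equals the Rankine--Hugoniot expression. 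If instead $\ts_1(\chi(t)+,t)\neq\ts_1(\chi(t)-,t)$, then the two traces $\ts_1^N(\chi^N(t)\pm,t)$ are distinct for all large $N$, which rules out alternative (ii); hence $\chi^N$ must be a shock segment at time $t$, and passing the Rankine--Hugoniot formula to the limit yields the first identity of the statement.

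The main obstacle is the combinatorial fact that the alternative (i) vs.~(ii) realized by $\chi^N$ at time $t$ may depend on $N$, together with ensuring that $t$ is not a transition time of any $\chi^N$. Both issues are absorbed into the countable exclusion set above: for each $N$ the transition times form a countable subset of $\{n\dt\}$, and within each remaining open subinterval the alternative is constant in $N$ along a further subsequence by compactness. Once this selection is made, the proof reduces to the simple algebraic dichotomy stated in the first paragraph combined with the convergences supplied by Lemmas~\ref{lem:4.3}--\ref{lem:4.4}; the only substantive input from the equation is the fact that classical characteristics of the local Burgers Riemann solver carry equal one-sided traces of $\ts_1^N$, while shocks carry distinct traces related by the Rankine--Hugoniot relation.
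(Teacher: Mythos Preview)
Your proposal is correct and follows essentially the same approach as the paper: pass the identity $\dot\chi^N(t)=\tfrac{\alpha}{2}\bigl(\ts_1^N(\chi^N(t)+,t)+\ts_1^N(\chi^N(t)-,t)\bigr)$ to the limit using Lemmas~\ref{lem:4.3}--\ref{lem:4.4}. The paper streamlines the argument by observing that this single formula already holds in \emph{both} of your alternatives (in case~(ii) the two traces coincide, so it reduces to $\alpha\ts_1^N(\chi^N(t),t)$), which makes your case distinction---and hence your entire third paragraph about selecting a subsequence on which the alternative is constant in $N$---unnecessary.
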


The proof of the above lemmas is similar to the one given in \cite{Glimm_Lax_1970}.
See Appendix B for the details.

Denote $\rmd C_i^N$ and $\rmd \Delta_i^N$ as the measures corresponding to the approximate solution $(\ts_1^N,\ts_3^N)^T$ that assign its value in each diamond $\dm_{m,n}^N$ to the center $(m \dx, n \dt)$.
Due to the bounds of $C_i^N(\Omega)$ and $\Delta_i^N(\Omega)$, one has
\[
\rmd C_i^N \to \rmd C_i, \quad \rmd \Delta_i^N \to \rmd \Delta_i
\]
in weak*-topology for a subsequence.

Similarly, one may define the absolute value of the wave strength $\Str \chi(t)$ for the wave on one characteristic $\chi(t)$ and prove the corresponding convergence for the approximate sequence.
And one may call two characteristics of the same family $\chi_1$ and $\chi_2$ as coalescing, if there are infinitely many approximate ones coalesce.

\subsection{Widening effects of the rarefaction waves}

Let us focus on the approximate solutions on the domain between two approximate $1$-characteristics (See Figure~\ref{fig:5_char}).
For two $1$-characteristics $\chi_1(t) = \lim \chi_1^N(t)$ and $\chi_2(t) = \lim \chi_2^N(t)$ with $0 \leq \chi_2(t) - \chi_1(t) \leq 1$, 
denote
\[
I(t) = \overline{\chi_1(t) \chi_2(t)}, \quad D(t) = |I(t)| = \chi_2(t) - \chi_1(t).
\]
Then $\dot{D}(t) = \dot{\chi}_2(t) - \dot{\chi}_1(t)$ and
\[
D(t) = D(T_0) + \int_{T_0}^{t} \Big( \dot{\chi}_2(\tau) - \dot{\chi}_1(\tau) \Big) \rmd \tau.
\]

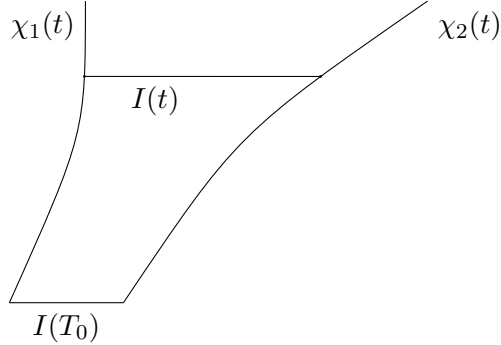
\begin{figure}[htbp]
\centering
\begin{tikzpicture}[scale = .5]
		
	\draw [name path = curve 1] (0,0) .. controls (2,4.5) and (2,4.5) .. (2,8) node[below left] {$\chi_1(t)$};
	\draw [name path = curve 2] (3,0) .. controls (6,4.5) and (6,4.5) .. (11,8) node[below right] {$\chi_2(t)$};
	\path [name path = curve 3] (0,6) -- (11,6);
	
	\fill [name intersections = {of = curve 1 and curve 3, by={a1}}] 
			(a1) circle (1pt) ;
	\fill [name intersections = {of = curve 2 and curve 3, by={a2}}] 
			(a2) circle (1pt);
		
	\draw[black] (a1)-- node[below,pos=0.3] {$I(t)$} (a2);
	\draw[black] (0,0) -- node[below,pos=0.5] {$I(T_0)$} (3,0);

\end{tikzpicture}
\caption{Domain between two characteristics}
\label{fig:5_char}
\end{figure}

For an approximate solution $(\ts_1^N,\ts_3^N)^T$, 
\begin{align*}
& \dot{\chi}_2^N(n_*^N\dt) - \dot{\chi}_1^N(n_*^N\dt) \\
= & \frac{\alpha}{2} \Big( \ts_1^N(\chi_2^N(n_*^N\dt)+, n_*^N \dt) + \ts_1^N(\chi_2^N(n_*^N \dt)-,n_*^N\dt) \Big) \\
& -  \frac{\alpha}{2} \Big( \ts_1^N(\chi_1^N(n_*^N\dt)+, n_*^N \dt) + \ts_1^N(\chi_1^N(n_*^N \dt)-,n_*^N\dt) \Big) \\
= &  \frac{\alpha}{2} \Big( 2 \ts_1^N(\chi_2^N(n_*^N\dt)-, n_*^N \dt) - \Str \chi_2^N(n_*^N \dt)  \Big) \\
& - \frac{\alpha}{2} \Big( 2 \ts_1^N(\chi_1^N(n_*^N\dt)+, n_*^N \dt) + \Str \chi_1^N(n_*^N \dt)  \Big) \\
= &  {\alpha} \Big( \ts_1^N(\chi_2^N(n_*^N\dt)-, n_*^N \dt) - \ts_1^N(\chi_1^N(n_*^N\dt)+, n_*^N \dt) \\
& - \frac{1}{2} \Str \chi_2^N(n_*^N \dt) - \frac{1}{2} \Str \chi_1^N(n_*^N \dt) \Big)\\
= & \alpha X_1^+(I^N(n_*^N\dt)) + \alpha X_1^-(I^N(n_*^N \dt)) - \frac{\alpha}{2} \big( \Str \chi_2^N(n_*^N \dt) + \Str \chi_1^N(n_*^N \dt) \big). 
\end{align*}
Passing to the limit leads to
\[
\dot{D}(t) = \dot{\chi}_2(t) - \dot{\chi}_1(t) = \alpha X_1^+(I(t)) + \alpha X_1^-(I(t)) - \frac{\alpha}{2} \big( \Str \chi_2(t) + \Str \chi_1(t) \big),
\]
then 
\[
D(t) = D(T_0) + \alpha \int_{T_0}^{t} \big( X_1^+(I(\tau)) + X_1^-(I(\tau)) \big) \rmd \tau - \frac{\alpha}{2} \int_{T_0}^{t} \big( \Str \chi_2(\tau) + \Str \chi_1(\tau) \big) \rmd \tau.
\]
On the other hand,
by the approximate conservation laws on the domain $\Lambda^N_{t_1,t_2}$ surrounded by $\chi_1^N, \chi_2^N, t=t_1, t = t_2 $ and passing to the limit, it holds
\begin{align*}
X_1^+(I(\tau)) \geq & X_1^+(I(t)) - \Delta_1(\Lambda_{\tau,t}),\\
|X_1^-(I(\tau))| \leq & |X_1^-(I(T_0))| + \Delta_1(\Lambda_{T_0,\tau}).
\end{align*}
Thus,
\[
D(t) \geq \alpha (t-T_0) \big( X_1^+(I(t)) - |X_1^-(I(T_0))| \big) - \alpha (t-T_0) \Delta_1(\Lambda_{T_0,t}) - \frac{\alpha}{2} \int_{T_0}^{t} \big( \Str \chi_2(\tau) + \Str \chi_1(\tau) \big) \rmd \tau. 
\]
and 
\[
X_1^+(I(t)) \leq \frac{D(t)}{\alpha (t-T_0)} + |X_1^-(I(T_0))| + \Delta_1(\Lambda_{T_0,t}) + \frac{1}{2} \frac{1}{t-T_0} \int_{T_0}^{t} \big( \Str \chi_2(\tau) + \Str \chi_1(\tau)  \big) \rmd \tau.
\]

Now one can get rid of $\Str \chi_1$ and $\Str \chi_2$ by the following procedure as in \cite{Glimm_Lax_1970}.
Divide $I(T_0)$ into small pieces with $\xi_l$ as the corresponding dividing points,
such that 
for smooth data case, at $T_0$ the $1$-rarefaction waves crossing each $\overline{\xi_l \xi_{l+1}}$ is not larger that $\frac{1}{800} |I(T_0)| \mb$.
Denote $\psi_l$ as a $1$-characteristic originating from $\xi_l$ and denote $\Lambda_{l,l+1}$ as the domain surrounded by $I(T_0), I(T_0+T_*), \psi_l$ and $\psi_{l+1}$.
Then one can find the first $\psi_{l^*}$ that does not coalesce with $\chi_1$ and the last $\psi_{l^{**}}$ that does not coalesce with $\chi_2$, where, without loss of generality, one may assume $\chi_2$ does not coalesce with $\chi_1$.
Repeat the above process to $\Lambda_{l^*,l^{**}}$, noting that $\Str \psi_{l^*}(t)$ and $\Str \psi_{l^{**}}(t)$ are parts of $X_1^-(I(t))$,
and by applying the approximate conservation laws to $\Lambda_{l^*-1,l^*}$ and $\Lambda_{l^{**},l^{**}+1}$, one has
\begin{equation} \label{4.13_wdn}
X_1^+(I(t)) \leq \frac{D(t)}{\alpha (t-T_0)} - X_1^-(I(T_0)) + \Delta_1(\Lambda_{T_0,t}) + \frac{1}{400} |I(T_0)| \mb.
\end{equation}
For the case that the data is not smooth at $T_0$, one may take an approximate sequence.

At last, taking a sequence of approximate smooth data if necessary, 
one can divide the data at $t=T_0$ into pieces with the starting points $\zeta_j= (\chi_j(T_0),T_0)$ of $1$-characteristic curves $\chi_j$, such that
\begin{align*}
& X_1^+(\overline{\zeta_j \zeta_{j+1}}) \leq \delta_j, \quad j \text{ is odd,} \\
& \left| X_1^- (\overline{\zeta_j \zeta_{j+1}}) \right| \leq \delta_j, \quad j \text{ is even}
\end{align*}
with 
\[
\sum_j \delta_j \leq \frac{1}{400} \mb.
\]
Then, for $\zeta_j^* = (\chi_j(T_0+T_*),T_0+T_*)$, simply by approximate conservation laws on the domain $\Lambda_{j,j+1}$ surrounded by $\chi_j,\chi_{j+1}, t=T_0$ and $t=T_0+T_*$, it holds
\begin{align*}
& X_1^+( \overline{\zeta_j^* \zeta_{j+1}^*}) \leq \delta_j + \Delta_1(\Lambda_{j,j+1}), \quad \text{for } j \text{ odd,}\\
\intertext{while by \eqref{4.13_wdn},}
& X_1^+( \overline{\zeta_j^* \zeta_{j+1}^*}) \leq \frac{|  \overline{\zeta_j^* \zeta_{j+1}^*} | }{\alpha T_*} + \delta_j + \Delta_1(\Lambda_{j,j+1}) + \frac{1}{400} |\overline{\zeta_j \zeta_{j+1}}| \mb, \quad \text{for } j \text{ even.}
\end{align*}
Noting that
\begin{equation*}
\sum_{j} |  \overline{\zeta_j^* \zeta_{j+1}^*} | = 1, \quad \sum_j |\overline{\zeta_j\zeta_{j+1}}| = 1, 
\end{equation*}
adding these estimates, one has
\[
X_1^+( [0,1) \times \{T_0+T_*\} ) \leq \frac{1}{\alpha T_*} + \frac{1}{200} \mb + \sum_{j} \Delta_1(\Lambda_{j,j+1}).
\]
Meanwhile, due to the periodicity, by \eqref{4.6_1del1}, \eqref{4.5_tv3_done} and \eqref{1.10_mb},
\[
\sum_{j} \Delta_1(\Lambda_{j,j+1}) \leq  \Delta_1([0,1) \times [T_0,T_0+T_*]) \leq \frac{3}{50} \mb.
\]
Thus, one has
\begin{equation*}
X_1^+( [0,1) \times \{T_0+T_*\} ) \leq \frac{49}{600} \mb.
\end{equation*}
By periodicity,
\[
\TV \ts_1(T_0+T_*) \leq \frac{49}{300} \mb.
\]
Combining this with \eqref{4.5_tv3_done}, one can conclude the proof.
\end{proof}

At last, the proof for the uniqueness is remarked briefly here.
Since the quasilinear part of the system is two decoupled Burgers equations,
while the interaction terms can be treated linearly, 
one may perform the method of S.N. Kruzhkov \cite{Kruzkov_1970} to show the uniqueness as follows.
Suppose $(\ts_1^*,\ts_3^*)^T$ and $(\ts_1^{**},\ts_3^{**})^T$ are both entropy solutions to the Cauchy problem \eqref{1.1_sys} on $(x,t) \in [0,L] \times [0,T]$ with same periodic initial data $(\ts_{1,0},\ts_{3,0})^T$ and satisfy
\begin{gather*}
	\ts_1(x+1,t) = \ts_1(x,t), \quad \ts_3(x+1,t) = \ts_3(x,t), \quad \forall\, (x,t) \in [0,L] \times [0,T], \\
	\int_0^1 \ts_1(x,t) \rmd x = 0, \quad \int_0^1 \ts_3(x,t) \rmd x = 0, \quad \forall\, t \in [0,T],\\
	\| \ts_1 \|_{L_{x,t}^\infty} + \| \ts_3 \|_{L_{x,t}^\infty} \leq C_1
\end{gather*}
for some $C_1>0$ and for $(\ts_1,\ts_3)^T = (\ts_1^*,\ts_3^*)^T$ and $(\ts_1^{**},\ts_3^{**})^T$. 
Then since $\eta = |\ts_i - k_i|, \, q=\frac{\alpha}{2} |\ts_i - k_i| (\ts_i-k_i), \, (i=1,3)$ are convex entropy-entropy flux pair for each $k \in \mathbb{R}$, 
during the same selection of test functions and the limit process as in \cite{Kruzkov_1970},
one can get
\begin{align*}
	& \int_0^1 |\ts_1^*(x,t) - \ts_1^{**}(x,t)| \rmd x \\
	\leq & \int_0^1 |\ts_1^*(x,0) - \ts_1^{**}(x,0)| \rmd x 
	+ \frac{\beta}{4} \int_0^t \int_0^1 \int_{-1}^{1} |\ts_2'(\frac{x+y}{2})| |\ts_3^*(y,\tau) - \ts_3^{**}(y,\tau)| \rmd y \rmd x \rmd \tau 
\end{align*}
and
\begin{align*}
& \int_0^1 |\ts_3^*(x,t) - \ts_3^{**}(x,t)| \rmd x \\
\leq & \int_0^1 |\ts_3^*(x,0) - \ts_3^{**}(x,0)| \rmd x 
+ \frac{\beta}{4} \int_0^t \int_0^1 \int_{-1}^{1} |\ts_2'(\frac{x+y}{2})| |\ts_1^*(y,\tau) - \ts_1^{**}(y,\tau)| \rmd y \rmd x \rmd \tau .
\end{align*}
Adding up these results and using Gronwall's inequality yields the desired uniqueness result
\[
	(\ts_1^*,\ts_3^*)^T = (\ts_1^{**},\ts_3^{**})^T, \quad a.e. \text{       \qedsymbol}
\]




\appendix

\section{Finite Time Blowup of Classical Solutions}\label{app:class}	
For the system \eqref{1.1_sys},
this appendix would provide a proof on the blowup behavior of the classical solutions under the condition that the initial data of the sound waves $\sigma_1$ and $\sigma_3$ are relatively stronger than the steady entropy wave $\sigma_2$. 
Similar to the first section of \cite{Glimm_Lax_1970}, this blowup is essentially caused by the widening effect of the rarefaction waves and can be treated as a continuous version of Proposition \ref{prop:decay}.
\begin{prop}
	For the Cauchy problem \eqref{1.1_sys}, under the assumption \eqref{1.2_1}--\eqref{1.3_sigma2} and $\ts_2 \in C^1$,
	\[ \max_{x \in [0,\frac12)}  |\ts_2'| = \tme   \]
	with $C^1$ initial data $(\ts_{1,0},\ts_{3,0})^T$ satisfying \eqref{1.4_ini} with
	\begin{equation} \label{p1.3_k}
	\frac{\mi}{\tme} > \frac{3}{\ln \frac{13}{12}} \frac{\beta}{\alpha}  
	\end{equation}
	the corresponding classical solution $(\sigma_1,\sigma_3)^T$ would blow up in finite time  $T_b \leq \frac{6}{\alpha \mi}$.
	Furthermore, it must be the geometric blow-up.
\end{prop}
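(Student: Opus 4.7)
I would adapt a continuous version of Proposition~\ref{prop:decay} to the $C^1$ regime: with no rarefactions or shocks present, the same widening mechanism instead forces two nearby $1$-characteristics issued from a compression region to converge in finite time, producing a geometric blow-up of $\px\sigma_1$ while $\sigma_1$ itself stays bounded. Suppose, for contradiction, that the Cauchy problem admits a classical solution on $[0, T_b]$ with $T_b = 6/(\alpha \mi)$. On this interval I would first establish a priori $L^\infty$ and total variation bounds. Along characteristics $\dot\sigma_i = F_i$, so $\|\sigma_i(\cdot,t)\|_\infty$ is controlled by Gronwall. For the TV, rewrite
\[ \px F_1 = \frac{\beta}{4}\int_{-1}^{1} \sigma_2'\bigl(\tfrac{x+y}{2}\bigr)\, \partial_y \sigma_3(y,t)\, \rmd y \]
by integrating by parts in $y$ (using $\px \sigma_2'(\tfrac{x+y}{2}) = \partial_y \sigma_2'(\tfrac{x+y}{2})$ and killing boundary terms by the periodicities of $\sigma_2$ and $\sigma_3$, thereby avoiding $\sigma_2''$), which yields $\TV F_1(\cdot,t) \leq \beta \me \TV \sigma_3(\cdot,t)$ and an analogous bound for $F_3$. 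Gronwall then gives
\[ \TV \sigma_1(\cdot,t) + \TV \sigma_3(\cdot,t) \leq \mi \exp(\beta \me t). \]
Using $\me \leq \tfrac12 \tme$ and the hypothesis~\eqref{p1.3_k}, one obtains $\beta \me T_b \leq \ln(13/12)$, so the TV stays strictly below $\tfrac{13}{12} \mi$ on all of $[0, T_b]$.

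For the Riccati step, by the zero-mean periodicity $\int_0^1 (\sigma_{1,0}')^- \rmd x = \tfrac12 \TV \sigma_{1,0}$, so after (WLOG) swapping the roles of $\sigma_1$ and $\sigma_3$ if necessary, I can pick $\xi_0$ with $\sigma_{1,0}'(\xi_0) \leq -c_0 \mi$ for an explicit constant $c_0$. Along the $1$-characteristic $\chi(\cdot,\xi_0)$ defined by $\dot\chi = \alpha \sigma_1(\chi, t)$, the quantity $p(t) := \px\sigma_1(\chi(t),t)$ satisfies
\[ \dot p + \alpha p^2 = \px F_1(\chi(t), t), \]
with the pointwise IBP estimate $|\px F_1| \leq \tfrac12 \beta \tme \TV \sigma_3 \leq \tfrac{13}{24} \beta \tme \mi$ on $[0, T_b]$. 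Under~\eqref{p1.3_k} this forcing is strictly dominated by $\tfrac12 \alpha p_0^2$, so a comparison with the pure Riccati equation $\dot q = -\tfrac12 \alpha q^2$ forces $p(t) \to -\infty$ in time at most $6/(\alpha \mi)$, contradicting the assumed $C^1$ regularity. Meanwhile $\dot\sigma_1 = F_1$ has uniformly bounded right-hand side, so $\sigma_1(\chi(t),t)$ itself stays bounded; only the derivative blows up, and the $1$-characteristics emanating from a neighborhood of $\xi_0$ focus onto $\chi$, which is the definition of geometric blow-up.

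The main obstacle will be carefully tracking the explicit constants: the threshold $\mi/\tme > 3 \beta/(\alpha \ln(13/12))$ is tuned precisely so that (i) the exponential TV bound stays below $\tfrac{13}{12}\mi$ on all of $[0, T_b]$, and (ii) the Burgers-type compression term $\alpha p^2$ continues to strictly dominate the non-local forcing throughout that same interval. This is the continuous analogue of the delicate balance between nonlinear cancellation and resonance effects analyzed in Proposition~\ref{prop:decay}, with the non-local forcing now tending to inhibit blow-up rather than to enhance decay; the quantitative condition \eqref{p1.3_k} is exactly what is needed for the compression to win over this inhibition.
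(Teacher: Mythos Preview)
Your approach is a valid variant of the paper's. Both arguments first establish the same TV growth bound $\TV\sigma_1+\TV\sigma_3\le\mi\exp(\tfrac{\beta}{2}\tme t)<\tfrac{13}{12}\mi$ on $[0,T_b]$, select a point $z^*$ with $\sigma_{1,0}'(z^*)\le-\tfrac14\mi$, and then track the $1$-characteristic through it. The paper, however, works with the \emph{Lagrangian} quantity $\sigma_{1,z}(t;z)=\partial_z\big(\sigma_1(x_1(t;z),t)\big)$, which obeys the linear equation~\eqref{p1.11+}; a bootstrap shows $\sigma_{1,z}(t;z^*)\in[-\tfrac13\mi,-\tfrac16\mi]$, whence the Jacobian $\partial_z x_1 = 1+\alpha\int_0^t\sigma_{1,z}\,d\tau$ reaches zero by $t=\tfrac{6}{\alpha\mi}$, which is exactly the geometric blow-up. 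Your Riccati route for $p=\px\sigma_1$ along the characteristic is the Eulerian counterpart of the same mechanism; the two are related by $p=\sigma_{1,z}/\partial_z x_1$.

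One constant needs tightening. With $c_0=\tfrac14$ (which is what your mean-value argument actually yields), the comparison with $\dot q=-\tfrac12\alpha q^2$ gives blow-up only by $2/(\alpha|p_0|)\le 8/(\alpha\mi)$, not $6/(\alpha\mi)$, so the contradiction does not close on $[0,T_b]$ as you set it up. You need instead the comparison with $\dot q=-\tfrac23\alpha q^2$, which requires the forcing to satisfy $|\px F_1|\le\tfrac13\alpha p_0^2$, i.e.\ $\tfrac{13}{24}\beta\tme\mi\le\tfrac{\alpha}{48}\mi^2$, i.e.\ $\mi/\tme\ge 26\,\beta/\alpha$. Since $3/\ln\tfrac{13}{12}\approx 37.5>26$, hypothesis~\eqref{p1.3_k} does deliver this, so the fix is minor. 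The paper's Lagrangian formulation has the advantage that the constant $6$ falls out directly from the lower bound $\sigma_{1,z}\le-\tfrac16\mi$ without any Riccati comparison.
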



\begin{proof}
Set $K(x)$ as \eqref{2.3_rs2}, then \eqref{2.5_K} holds and
\begin{gather}
\int_{0}^{1} |K(x)| \rmd x \leq \frac{\beta}{4} \tme. \label{p1.3_Kl1}
\end{gather}	
Without loss of generality, one may assume
\[
\TV \sigma_{1,0} \geq \frac{1}{2} \mi, \quad \TV \sigma_{3,0} \leq \frac{1}{2} \mi,
\]
then by the property of periodicity, there exists a point $z^* \in [0,1)$, such that
\begin{equation}
	\frac{\partial \sigma_{1,0}}{\partial x} (z^*) = - \frac{1}{4} \mi. \label{p1.10_ini_max}
\end{equation}

Set
\begin{align}
	w_1 = & \partial_x \sigma_1, \\
	w_3 = & \partial_x \sigma_3. 
\end{align}
Noting that
\[
\frac{\partial}{\partial x} \int_{-1}^{1} K(x+y) \sigma_i(y,t) \rmd y =
- \int_{-1}^{1} K(x+y) w_i(y,t) \rmd y,
\]
one can	deduce from \eqref{1.1_sys} that
\begin{equation} \label{p1.11_sys_w}
	\left\{\begin{aligned}
		& \partial_t w_1 + \alpha \partial_x (\sigma_1 w_1) - \int_{-1}^{1} K(x+y) w_3(y,t) \rmd y = 0,\\
		& \partial_t w_3 - \alpha \partial_x (\sigma_3 w_3) + \int_{-1}^{1} K(x+y) w_1(y,t) \rmd y = 0.
	\end{aligned}\right.
\end{equation}
Multiplying $\mathrm{sgn}(w_1)$ and $\mathrm{sgn}(w_3)$ on both sides of these two equations respectively and integrating over $[0,1)$, noting \eqref{p1.3_Kl1}, one can get
\[
	\frac{\rmd}{\rmd t} \big( \| w_1(\cdot,t) \|_{L^1 [0,1)} + \| w_3(\cdot,t) \|_{L^1 [0,1)} \big) \leq \frac{\beta}{2} \tme \ \big( \| w_1(\cdot,t) \|_{L^1 [0,1)} + \| w_3(\cdot,t) \|_{L^1 [0,1)} \big).
\] 
Thus, 
\begin{equation}\label{p1.12_C0_est}
	\| w_1(\cdot,t) \|_{L^1 [0,1)} + \| w_3(\cdot,t) \|_{L^1 [0,1)} \leq  \mi \exp(\frac{\beta}{2} \tme t),
\end{equation}
which is a continuous version of Proposition \ref{prop:growth}.

Denote the $1$-characteristic passing through $(z,0)$ as $x_1(t;z)$, namely
\begin{equation*}
	\left\{ \begin{aligned}
		& \frac{\rmd x_1(t;z)}{\rmd t} = \alpha \sigma_1(x_1(t;z),t),\\
		& x_1(0;z) = z,
	\end{aligned} \right.
\end{equation*}
then
\begin{equation*}
	\left\{ \begin{aligned}
		& \frac{\rmd}{\rmd t} \frac{\partial x_1(t;z)}{\partial z} = \alpha \frac{\partial \sigma_1(x_1(t;z),t)}{\partial z} \overset{\textrm{def.}}{=} \alpha \sigma_{1,z}(t;z), \\
		& \frac{\partial x_1(0;z)}{\partial z} = 1,
	\end{aligned} \right.
\end{equation*}
and
\begin{equation} \label{p1.14_x1z}
	\frac{\partial x_1(t;z)}{\partial z} = 1 + \alpha \int_0^t \sigma_{1,z}(\tau;z) \rmd \tau.
\end{equation}
By the first equation of the original system \eqref{1.1_sys},
\[
	\frac{\rmd}{\rmd t} \sigma_1(x_1(t;z),t) = - \int_{-1}^1 K( x_1(t;z) + y ) \sigma_3(y,t) \rmd y = - \int_{-1}^1 K(  y ) \sigma_3(y - x_1(t;z),t) \rmd y,
\]
therefore it holds
\begin{equation} \label{p1.11+}
	\frac{\rmd}{\rmd t} \sigma_{1,z}(t;z) = \int_{-1}^{1} K(y) w_3(y - x_1(t;z), t) \frac{\partial x_1(t;z)}{\partial z}  \rmd y.
\end{equation}
Now one may use a bootstrap argument to suppose
\[
	\sigma_{1,z}(t;z^*) \in [-\frac{1}{3} \mi, - \frac{1}{6} \mi], \quad \forall\, t \in [0,T_*)
\]
with
\[
	T_* \leq \min\{\frac{6}{\alpha \mi}, T_b \},
\]
which holds already at $t=0$ by \eqref{p1.10_ini_max}.
Then due to \eqref{p1.14_x1z}
\[
	\left| \frac{\partial x_1(t;z)}{\partial z}\mid_{z=z^*} \right| = \left| 1 + \alpha \int_0^t \sigma_{1,z}(\tau;z^*) \rmd \tau \right| \leq 1, \quad \forall\ t \in [0,T_*).
\]
Thus, integrating \eqref{p1.11+} with respect to $t$ and using \eqref{2.5_K}, \eqref{p1.12_C0_est} and \eqref{p1.3_k}, one may get
\begin{align*}
	\left| \sigma_{1,z}(t;z^*) - \frac{\partial \sigma_1(z,0)}{\partial z} \mid_{z=z^*} \right| \leq 
	& 2 \max_{x\in [0,1)} |K(x)| \ \int_{0}^{t} \| w_3(\cdot,t) \|_{L^1[0,1)} \rmd t \\
	\leq & \frac{\beta}{2} \tme \int_0^t  \mi \exp(\frac{\beta}{2} \tme \tau) \rmd \tau  \\
	= & \mi \big( \exp(\frac{\beta}{2} \tme t) - 1 \big) \\
	< & \frac{1}{12} \mi, \quad \forall\, t \in [0,T_*).
\end{align*}
Therefore, by \eqref{p1.10_ini_max}, one has
\[
	\sigma_{1,z}(t;z^*) \in (- \frac{1}{3} \mi, - \frac{1}{6} \mi), \quad \forall\, t \in [0,T_*),
\]
which completes the bootstrap argument.

Now, suppose by contrary $T_b > \frac{6}{\alpha \mi}$, then by \eqref{p1.14_x1z}, there exists $t^* \leq \frac{6}{\alpha \mi} < T_b $ such that
\begin{equation} \label{A.30}
	\frac{\partial x_1(t;z)}{\partial z} \bigg|_{z=z^*,t=t^*} = 0.
\end{equation} 
Set $x^* = x_1(t^*;z^*)$, then $\frac{\partial \sigma_1}{\partial x}(x^*,t^*)$ is finite since $t^* < T_b$, 
and $\sigma_{1,z}(t^*;z^*)$ is finite and negative by the above bootstrap argument, 
but this contradicts with the chain rule
\[
	\sigma_{1,z}(t;z) = \frac{\partial \sigma_1}{\partial x}(x_1(t;z),t) \frac{\partial x_1(t;z)}{\partial z}. 
\]
In fact, \eqref{A.30} shows that the blow-up is of the geometric type.
\end{proof}

\section{Proof of the lemmas in Section \protect{\ref{ssc:4.2}}} \label{app:B}

In this appendix, the lemmas in Section \ref{ssc:4.2} are proved in details.
The methods used in this part are slight modifications of the ones used in \cite{Glimm_Lax_1970}.
Since the details are quite tedious, and the methods are not new, it can be deleted before publication.

\subsection*{Proof of Lemma \protect{\ref{lem:4.2}}}
By the construction of $\chi^N$, it holds that
\begin{align*}
& | \chi^N (t_1) - \chi^N (t_2) | \leq \alpha \| \ts_1^N \|_{L^\infty} \cdot |t_2 - t_1|, \quad \forall\, t_1,t_2 \in (n \dt, (n+1) \dt ), \\
& | \chi^N( n \dt+) - \chi^N( (n+1) \dt+ ) | = \dx.
\end{align*}
Therefore,
\begin{equation} \label{B.1}
| \chi^N(t_1) - \chi^N(t_2) | \leq \Lambda |t_2 - t_1| + \dx, \quad \forall\, t_1, t_2 \in [T_0, T_0 + T_*],
\end{equation}
where $\Lambda$ is the one for C.F.L. condition.
Now one may just use a diagonal selection method for all the rational time points and take out the uniform convergence subsequence.

Meanwhile, \eqref{B.1} shows that $\Lambda$ is the Lipschitz constant for $\chi$.

\subsection*{Proof of Lemma \protect{\ref{lem:4.3}}}
For each $n_1^N, n_2^N \in \mathbb{Z}^+$ satisfying 
$T_0 \leq n_1^N \dt < n_2^N \dt < T_0 + T_*$,
one can calculate $\chi^N(n_2^N \dt) - \chi^N(n_1^N \dt)$ as follows.
If $\chi^N(t)$ is a line segment for $t \in ((n-1) \dt, n \dt)$
starting from $(m \dx, (n-1) \dt)$, namely,
\[
\chi^N((n-1) \dt) = m \dx,
\]
then its continuation line segment starts from $( (m+1) \dx, n \dt )$, namely,
\[
\chi^N (n \dt+) = (m+1) \dx,
\] 
if and only if the sampling point locates on the left of its end, i.e.,
\begin{equation} \label{B.1+}
(m + \vth_n) \dx < \chi^N(n \dt-).
\end{equation}
Since 
\[
\chi^N (n \dt- ) = \chi^N( (n-1) \dt + ) + \dot\chi^N \dt \geq \chi^N((n-1) \dt) + a \dt = m \dx + a \dt, 
\]
at least for all the cases $\vth_n < a/ \Lambda$ \eqref{B.1+} holds
and the continuation jumps to the right.

Set
\[
S^N(a) = \sharp \{ \vth_n \mid n_1^N \leq n \leq n_2^N, \vth_n < a/\Lambda \}.
\]
There are at least $S^N(a)$ times that $\chi^N$ jumps to the right and at most $n_2^N - n_1^N - S^N(a)$ times to the left.
Thus, 
\[
\chi^N(n_2^N \dt+ ) - \chi^N(n_1^N\dt+) \geq S^N(a) \dx - (n_2 - n_1 - S^N(a)) \dx
\]
and 
\begin{equation} \label{B.2}
\frac{ \chi^N(n_2^N \dt+) - \chi^N(n_1^N \dt+) }{ n_2^N \dt - n_1^N \dt } \geq \frac{\dx}{\dt} (\frac{2 S^N(a)}{n_2^N - n_1^N} -1). 
\end{equation}
Choosing 
\[
n_2^N \dt \to t_2 \; \text{and} \; n_1^N \dt \to t_1, \quad \text{as } N \to \infty,
\]
such that $T_0 \leq t_1 \leq t_2 < T_0+T_*$, 
and using the property that $\{ \vth_n \}$ are independently equi-distributed random variables, one gets that
\[
\lim_{N \to \infty} \frac{2 S^N(a)}{n_2^N - n_1^N} = \frac{a}{\Lambda} +1, \quad a.s..
\]
Thus, taking $N \to \infty$ in \eqref{B.2} and using Lemma \ref{lem:4.2} yields 
\[
\frac{\chi(t_2)-\chi(t_1)}{t_2 - t_1} \geq a, \quad a.s.,
\]
which implies that
\[
\dot{\chi}^N(t) \geq a, \quad a.s.
\]
for all but countable $t$.

Through a similar process one can show the other side of the inequality.
In fact, one has
\begin{lem} \label{lem:B.1}
For two $1$-characteristics $\chi_1$ and $\chi_2$ obtained in the way of Lemma \ref{lem:4.2}, if there exists a constant $c > 0$ and a time span $[t_1,t_2] \subseteq [T_0,T_0+T_*)$, such that
\[
\dot{\chi}^N_1(t) \leq \dot{\chi}^N_2(t) + c, \quad \forall\, t \in [t_1,t_2],
\]
for a subsequence, then
\[
\dot{\chi}_1(t) \leq \dot{\chi}_2(t) + c
\]
for all $t \in [t_1,t_2]$.
\end{lem}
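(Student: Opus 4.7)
\medskip

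\noindent\textbf{Proof proposal for Lemma \ref{lem:B.1}.}
The plan is to promote the pointwise inequality between derivatives to an integrated inequality on the approximate level, pass to the uniform limit furnished by Lemma \ref{lem:4.2}, and then differentiate the resulting monotonicity to recover the desired bound in the same $\text{a.e.}$ sense as Lemma \ref{lem:4.3}. So this is essentially an ``integrate--pass--differentiate'' argument, with no delicate wave--interaction estimate needed.

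First, for any $s,t$ with $t_1\le s\le t\le t_2$, the hypothesis $\dot\chi_1^N(\tau)\le \dot\chi_2^N(\tau)+c$ on $[t_1,t_2]$ integrates, on each open interval between consecutive mesh times $n\Delta t^N$, to $\chi_1^N(t)-\chi_1^N(s)\le \chi_2^N(t)-\chi_2^N(s)+c(t-s)$ up to the jumps of $\chi_i^N$ at the mesh lines; but at each such mesh line both $\chi_1^N$ and $\chi_2^N$ jump by exactly $\pm\Delta x^N$, and these jumps contribute an error of order $O(\Delta x^N)$ which vanishes as $N\to\infty$. Using the uniform convergence $\chi_i^N\to\chi_i$ supplied by Lemma \ref{lem:4.2}, I pass to the limit to obtain
\begin{equation*}
\chi_1(t)-\chi_1(s)\le \chi_2(t)-\chi_2(s)+c(t-s),\qquad\forall\,t_1\le s\le t\le t_2.
\end{equation*}
Equivalently, the function $\Phi(t):=\chi_2(t)-\chi_1(t)+ct$ is nondecreasing on $[t_1,t_2]$.

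Since $\chi_1$ and $\chi_2$ are Lipschitz (again by Lemma \ref{lem:4.2}), so is $\Phi$; in particular $\Phi$ is differentiable almost everywhere, and its nondecreasing monotonicity forces $\dot\Phi(t)\ge0$ wherever the derivative exists. This gives $\dot\chi_1(t)\le\dot\chi_2(t)+c$ at every point of joint differentiability, hence at all but a set of measure zero; combined with Lemma \ref{lem:4.3}, which already guarantees that $\dot\chi_i^N(t)\to\dot\chi_i(t)$ at all but a countable set of $t\in[t_1,t_2]$, the inequality holds on the intersection of these full-measure (resp.\ cocountable) sets, which is the precise sense in which Lemma \ref{lem:4.3} asserts inequalities between characteristic speeds.

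The only potential obstacle is the bookkeeping at mesh lines $t=n\Delta t^N$, where $\chi_i^N$ is discontinuous and $\dot\chi_i^N$ is not defined; this is handled by summing the smooth-piece inequalities and controlling the total jump contribution by $(\text{number of jumps})\cdot \Delta x^N = O(T_*\cdot\Delta x^N/\Delta t^N)\cdot \Delta x^N = O(\Delta x^N)\to 0$, so no genuine difficulty arises. The argument does not use the specific structure of the system \eqref{1.1_sys} beyond the Lipschitz continuity already established.
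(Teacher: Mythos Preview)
Your ``integrate--pass--differentiate'' plan breaks at the very first step. In this scheme each segment of $\chi_i^N$ begins at a mesh point $m\dx$ and, at the next mesh line, the continuation is reset to $(m\pm1)\dx$; hence
\[
\chi_i^N(n\dt+)-\chi_i^N((n-1)\dt+)=\pm\dx
\]
\emph{exactly}, independent of the slope on that step (see the line preceding \eqref{B.1}). The jump at $t=n\dt$ is therefore $\pm\dx-\dot\chi_i^N\dt$, and when you add back the smooth increment $\dot\chi_i^N\dt$ the speed cancels completely. Summing over steps gives
\[
(\chi_1^N-\chi_2^N)(n_2\dt+)-(\chi_1^N-\chi_2^N)(n_1\dt+)=\sum_{n_1<n\le n_2}\bigl(\epsilon_1(n)-\epsilon_2(n)\bigr)\dx,\qquad \epsilon_i(n)\in\{\pm1\},
\]
and the contribution $\sum(\dot\chi_1^N-\dot\chi_2^N)\dt\le c(t-s)$ that you hoped to keep has disappeared. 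Your bookkeeping line also miscounts: the total jump error is $(\text{number of jumps})\times O(\dx)=O\bigl((t-s)/\dt\bigr)\cdot O(\dx)=O(\Lambda(t-s))$, which does \emph{not} tend to zero; you have multiplied by $\dx$ once too often.

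The slope information is not lost, but it survives only in the \emph{threshold} that selects the jump sign: $\chi_i^N$ jumps right at step $n$ precisely when $\vth_n<\dot\chi_i^N/\Lambda$. The paper treats Lemma~\ref{lem:B.1} by the same random-choice counting used for the first part of Lemma~\ref{lem:4.3}: the hypothesis $\dot\chi_1^N\le\dot\chi_2^N+c$ bounds how often $\chi_1^N$ can jump right while $\chi_2^N$ jumps left, and the equidistribution of $\{\vth_n\}$ converts that frequency control into $(\chi_1-\chi_2)(t)-(\chi_1-\chi_2)(s)\le c(t-s)$ in the limit. That sampling argument is the essential missing ingredient; without it no information about $\chi_1-\chi_2$ can be extracted from the pointwise speed inequality at the approximate level.
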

This lemma would be used several times in the proof of Lemma \ref{lem:4.4}.

For the last part of Lemma \ref{lem:4.3}, one can first use the result of Section 4.1 that $\TVchi{}\dot{\chi}^N$ is uniformly bounded.
Noting that $\dot{\chi}^N$ is also uniformly bounded due to Proposition \ref{prop:growth}, 
one can apply Helly's selection principle to get a convergent subsequence such that 
\begin{align*}
& \lim_N \dot{\chi}^N(t) = s(t), \\
& \lim_N \int_{T_0}^{t} |\rmd \dot{\chi}^N| = \bar{s}(t).
\end{align*}

Secondly, one can show that the desired result holds at each continuous point of $\bar{s}(t)$.
In fact, suppose that $\bar{s}(t)$ is continuous at $t_0 \in [T_0,T_0+T_*)$,
then for any given $\varepsilon>0$,
there exist $\theta > 0$ and a subsequence such that
\begin{gather*}
\int_{t_0 - \theta}^{t_0 + \theta} |\rmd \dot{\chi}^N(t)| < \frac{\varepsilon}{3},  \\
| \dot{\chi}^N(t_0) - s(t_0) | < \frac{\varepsilon}{3}, \\
| s(t) - s(t_0) | < \frac{\varepsilon}{3}, \quad \forall\, t \in [t_0 - \theta, t_0 + \theta].
\end{gather*}
Then for $t \in [t_0 - \theta, t_0 + \theta]$, it holds that
\[
s(t) - \varepsilon < \dot{\chi}^N (t) < s(t) + \varepsilon.
\]
It thus follows from the first part of Lemma \ref{lem:4.3} that
\[
s(t) - \varepsilon < \dot{\chi}(t) < s(t) + \varepsilon.
\]
Due to the arbitrariness of $\varepsilon$, the desired result
\[
\dot{\chi}(t_0) = s(t_0) = \lim_N \dot{\chi}^N(t_0)
\]
is proved.

\subsection*{Proof of Lemma \protect{\ref{lem:4.4}}}
To this end, one can define
\[
Q_1^N(\dm^N_{m,n}) = \max\{ 0, -\alpha_{1,m,n} \} \cdot \max\{ 0,- \beta_{1,m,n} \} 
\]
for the $1$-shock collision happened at $\dm^N_{m,n}$ in the approximate solution $(\sigma_1^N,\sigma_3^N)^T$.
Since it counts only the intrafamily wave collision, while each $1$-wave pair ever enters the domain of created in the domain can only collide once,
$\sum_{m,n} Q_1^N(\dm^N_{m,n})$ is uniformly bounded by $(\TV \ts_1^N(\cdot,T_0) + \sum_{m,n} \Delta_1(\dmh) )^2$.
Similar as $C_1^N(\dm^N_{m,n}),\, \Delta_1^N(\dm^N_{m,n})$ and $S^N(\chi^N)$, one may denote
$\rmd C_1^N,\, \rmd \Delta_1^N,\, \rmd S^N$ as well as $\rmd Q_1^N$ as measures assigned their values to the center of the corresponding diamonds.
\footnote{Here the superscript $N$ is added for the quantity corresponding to $(\ts_1^N,\ts_3^N)^T$.}

Then by their uniform bounds, one can get a subsequence of the approximate solutions that
\[
\rmd C_1^N \to \rmd C_1, \; \rmd \Delta^N_1 \to \rmd \Delta_1, \; \rmd S^N(\chi^N) \to \rmd S(\chi), \; \rmd Q_1^N \to \rmd Q_1,
\]
in the $w^*$ topology of measures.

Now for all but countable $t_0 \in  [T_0, T_0+T_*]$,
any of $\rmd C_1, \rmd \Delta_1, \rmd S(\chi) $ and $ \rmd Q_1$ has zero measure at the point $(\chi(t_0),t_0)$.
One can prove that the result of the lemma holds at each of such $t_0$.

Without loss of generality, it is supposed that $\Lambda > 1$, for the $\Lambda$ in the C.F.L. condition.
For a sufficiently small number $\gamma>0$,
one can choose a neighborhood 
\[
V(\gamma) = \{ (x,t) \mid x \in [\chi(t_0) - 20 R, \chi(t_0) + 20R], t\in [t_0 - 20R , t_0 + 20R] \},
\]
where $R = R(\gamma) $ is sufficiently small such that for all  $N$ large enough the following requirements hold
\begin{enumerate}
\item The amount of $1$-waves canceled in $V(\gamma)$ is less than $\gamma^3$, namely, 
\begin{equation} \label{B.3} 
C_1^N(V(\gamma)) < \gamma^3.
\end{equation} 
\item The amount of $1$-shocks entering $\chi^N$ in $V(\gamma)$ is less than $\gamma^3$, namely,
\begin{equation} \label{B.4}
S^N(\chi^N) < \gamma^3.
\end{equation}
\item The amount of $1$-shock collision happened in $V(\gamma)$ is less than $\gamma^3$, namely,
\begin{equation} \label{B.5}
Q^N_1(V(\gamma)) < \gamma^3.
\end{equation}
\item The amount of influence from $3$-waves to $1$-wave in $V(\gamma)$ is less than $\gamma^3$, namely,
\begin{equation} \label{B.6}
\Delta_1^N(V(\gamma)) < \gamma^3.
\end{equation}
\end{enumerate}

First, for any $1$-shock that is strong at some time, one can get an estimate for its strength in the later time.

\begin{fact}\label{fact:1}
Let $\phi^N$ be an approximate characteristic located in $V(\gamma)$ such that $\Str \phi^N(t) > \gamma^{\frac32}$ with $(\phi^N(t),t) \in V(\gamma)$, 
then for any $t' > t$ with $(\phi^N(t'),t) \in V(\gamma)$, it holds that
\begin{gather*}
\gamma^{\frac32} - 2 \gamma^3 \leq \Str \phi^N(t') \leq \Str \phi^N(t) + 3 \gamma^{\frac32}, \\
\mathrm{TV}_{\phi^N \cap V(\gamma)} \dot{\phi}^N \leq 4 \gamma^{\frac32}.
\end{gather*}
\end{fact}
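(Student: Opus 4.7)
The proof rests on the four smallness bounds \eqref{B.3}--\eqref{B.6} on $V(\gamma)$ combined with the half-diamond approximate conservation laws \eqref{4.*1}--\eqref{4.*2} from Section~4.1, applied to the portion of $\phi^N$ lying inside $V(\gamma)$. The plan is to first establish a uniform lower bound on $\Str\phi^N$, then upgrade it to an upper bound on the growth of $\Str\phi^N$; the bound on $\TVchi{}\dot\phi^N$ follows from these together with the one-sided conservation laws applied on either side of $\phi^N$.

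For the lower bound on the strength, I would trace $\phi^N$ through the diamonds it crosses between $t$ and $t'$ and apply \eqref{4.*2} in each: the $1$-shock on $\phi^N$ can lose strength only through the intrafamily cancellation term $C_1^+$ (an incoming rarefaction absorbed by the shock) or through the interfamily term bounded by $\Delta_1$. Summing, the total loss is at most $C_1^N(V(\gamma))+\Delta_1^N(V(\gamma))<2\gamma^3$, giving $\Str\phi^N(t')\geq \gamma^{\frac32}-2\gamma^3$ and in particular $\Str\phi^N>\tfrac12\gamma^{\frac32}$ throughout $V(\gamma)$ for $\gamma$ small. The upper bound then uses the quadratic shock-collision quantity $Q_1^N$: whenever another $1$-shock of strength $|\beta|$ merges with $\phi^N$ in a diamond $\dm$, one has $Q_1^N(\dm)\geq \Str\phi^N\cdot|\beta|\geq \tfrac12\gamma^{\frac32}|\beta|$, so summing and using \eqref{B.5} bounds the total absorbed $1$-shock mass by $2Q_1^N(V(\gamma))/\gamma^{\frac32}<2\gamma^{\frac32}$. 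Adding the interfamily contribution from \eqref{B.6} yields $\Str\phi^N(t')\leq \Str\phi^N(t)+3\gamma^{\frac32}$.

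For the total-variation bound on $\dot\phi^N$, note that on each linear segment $\dot\phi^N=\tfrac{\alpha}{2}\bigl(\sigma_1^N(\phi^N+)+\sigma_1^N(\phi^N-)\bigr)$, so $\TVchi{}\dot\phi^N\leq \tfrac12\bigl(\TVchi{+}\sigma_1^N+\TVchi{-}\sigma_1^N\bigr)$ exactly as in Section~4.1. I would apply the one-sided approximate conservation laws to $\Lambda_R(\phi^N)\cap V(\gamma)$ and $\Lambda_L(\phi^N)\cap V(\gamma)$: each side is controlled by the $1$-waves crossing $\phi^N$ from that side plus the local $\Delta_1$-contribution. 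The preceding steps have shown that rarefactions crossing contribute $O(C_1^N)=O(\gamma^3)$, merging shocks contribute at most $2\gamma^{\frac32}$, and interfamily influence at most $\gamma^3$, which after summing both sides yields $\TVchi{}\dot\phi^N\leq 4\gamma^{\frac32}$.

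The delicate point is the upper bound on $\Str\phi^N$: the cubic smallness $Q_1^N<\gamma^3$ alone is insufficient to control the total mass of shocks absorbed into $\phi^N$, and it is precisely the lower bound $\Str\phi^N\gtrsim \gamma^{\frac32}$ from the first step that converts the cubic smallness of $Q_1^N$ into a linear $O(\gamma^{\frac32})$ bound on that mass. Once this quadratic-to-linear conversion is in place, the lower-bound step and the total-variation step are routine bookkeeping along $\phi^N$ using the approximate conservation laws restricted to $V(\gamma)$.
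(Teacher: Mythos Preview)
Your proposal is correct and matches the paper's argument essentially step for step: the lower bound via \eqref{B.3} and \eqref{B.6}, the ``quadratic-to-linear'' conversion using $Q_1^N(V(\gamma))<\gamma^3$ together with the lower bound $\Str\phi^N>\tfrac12\gamma^{3/2}$ to cap the total absorbed shock mass by $2\gamma^{3/2}$, and then the TV bound on $\dot\phi^N$ by summing the three possible sources of speed change. The only cosmetic difference is that for the TV bound the paper argues directly---the speed of $\phi^N$ can change only through $1$-shock entering, $1$-wave cancellation on $\phi^N$, and $3$-wave influence, already bounded respectively by $2\gamma^{3/2}$, $\gamma^3$, $\gamma^3$---whereas you route this through the one-sided domains $\Lambda_{R},\Lambda_{L}$; the latter is unnecessary machinery here but leads to the same estimate.
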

\begin{proof}[Proof of Fact \protect{\ref{fact:1}}]
Due to \eqref{B.3} and \eqref{B.6}, it is direct to get the lower bound.
To get the other side of the estimate, one may estimate the amount of $1$-shocks entering $\phi^N$,
which are $\beta_1, \beta_2, \dots, \beta_k$. 
Due to the lower bound $\Str \phi^N(t') > \frac12 \gamma^\frac32$ obtained already, the $1$-shock collision happened on $\phi^N$ is at least 
\[
\frac12 \gamma^\frac32 \cdot (\sum_j \beta_j).
\]
Thus, due to the upper bound of the total collision \eqref{B.5}, one can get 
\[
\sum_j \beta_j < 2 \gamma^\frac32,
\]
which, combined with \eqref{B.6}, gives the upper bound in the desired estimate and completes the proof for the first part of the Fact. 

Since the change of $\phi^N$ can only be caused by $1$-shock entering, $1$-wave cancellation and $3$-wave influence, one can use \eqref{B.3}, \eqref{B.6} and the result of the first part to get the second estimate in the Fact. 
\end{proof} 

Now one can divide the proof into two parts as follows 
\begin{itemize}
\item[$(A)$] For all $N$ large enough, there is a $1$-shock $\phi^N$ with $\Str \phi^N(t^N) \geq \gamma $ contained in $\sigma^N_1$ and located near $(\chi^N(t_0),t_0)$ 
\begin{gather*}
|\phi^N(t^N) - \chi^N(t_0)| < \frac{\alpha}{4} \gamma R, \\
|t^N - t_0| < \frac{\alpha}{4 \Lambda} \gamma R.
\end{gather*}
\item[$(B)$] $R(\gamma)$ can be further shrunk that for all $N$ large enough, all $1$-shocks of $\sigma_1^N$ located in $V(\gamma)$ have strength less than $\gamma$.\\[2pt]
\end{itemize}

\noindent\emph{Proof of Case $(A)$:} 
In this case, one can prove that $\phi^N(t^N)$ locates actually on $\chi^N$,
and there is a short space interval centered at $\chi^N(t_0)$ such that the total strength of $1$-waves passing through it is small enough,
which implies that the total variation and thus the oscillation of $\ts_1^N$ over this interval is small.
To prove this fact, the idea is that otherwise the $1$-waves would coalesce with $\phi^N$ and cause too strong cancellation at shock collisions.

First, one may show that all the approximate characteristics near $\phi^N$ would roughly point towards it.

\begin{fact} \label{fact:2}
It holds that
\begin{equation} \label{B.5+}
\alpha \sigma_1^N(x,t) + \frac{\alpha}{4} \gamma \leq \dot{\phi}^N(t), \quad \forall\, (x,t) \in (\phi^N(t), \phi^N(t) + \frac{\alpha}{4} \gamma R) \times (t_0 - R/\Lambda, t_0 + R/\Lambda),
\end{equation}
and 
\begin{equation} \label{B.6+}
\alpha \ts_1^N(x,t) - \frac{\alpha}{4} \gamma \geq \dot{\phi}^N(t), \quad \forall\, (x,t) \in (\phi^N(t) - \frac{\alpha}{4} \gamma R, \phi^N(t)) \times (t_0 - R/\Lambda, t_0 + R/\Lambda).
\end{equation}
\end{fact}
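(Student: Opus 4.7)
My plan rests on the Rankine--Hugoniot condition for the Burgers equation $\pt\ts_1+\tfrac{\alpha}{2}\px(\ts_1^2)=0$ applied to the $1$-shock $\phi^N$:
\[
\dot\phi^N(t)=\tfrac{\alpha}{2}\bigl(\ts_1^N(\phi^N(t)-,t)+\ts_1^N(\phi^N(t)+,t)\bigr),
\]
which together with $\Str\phi^N(t)=\ts_1^N(\phi^N(t)-,t)-\ts_1^N(\phi^N(t)+,t)$ gives $\alpha\ts_1^N(\phi^N(t)\pm,t)=\dot\phi^N(t)\mp\tfrac{\alpha}{2}\Str\phi^N(t)$. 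By Fact~\ref{fact:1}, $\Str\phi^N(t)\geq\gamma-2\gamma^3\geq\tfrac{7\gamma}{8}$ throughout $\phi^N\cap V(\gamma)$ once $\gamma$ is sufficiently small. Hence \eqref{B.5+}--\eqref{B.6+} already hold on the curve $\phi^N$ itself with a margin of $\tfrac{7\alpha\gamma}{16}-\tfrac{\alpha\gamma}{4}=\tfrac{3\alpha\gamma}{16}$; what remains is to bound the oscillation of $\ts_1^N(\cdot,t)$ across each of the two strips of width $\tfrac{\alpha\gamma R}{4}$ on either side of $\phi^N(t)$ by $\tfrac{3\gamma}{16}$, uniformly in $t\in(t_0-R/\Lambda,t_0+R/\Lambda)$.

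I would prove this by contradiction for the right strip (the left is handled symmetrically). If some $(x_\ast,t_\ast)$ in the right strip satisfied $\alpha\ts_1^N(x_\ast,t_\ast)-\alpha\ts_1^N(\phi^N(t_\ast)+,t_\ast)>\tfrac{3\alpha\gamma}{16}$, the piecewise-constant profile $\ts_1^N(\cdot,t_\ast)$ on $(\phi^N(t_\ast),x_\ast]$ would contain either a $1$-rarefaction or a $1$-shock of strength at least $\tfrac{3\gamma}{32}$. In the rarefaction case, trace the carrying approximate $1$-characteristic forward: all its speeds lie strictly below $\dot\phi^N(t_\ast)-\tfrac{11\alpha\gamma}{32}$ (by entropy admissibility of $\phi^N$ combined with the bound from Rankine--Hugoniot), so $\phi^N$ overtakes it within time $\tfrac{8R}{11}<R$, well inside $V(\gamma)$. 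The resulting shock--rarefaction interaction contributes at least $\min(\Str\phi^N,\tfrac{3\gamma}{32})=\tfrac{3\gamma}{32}$ to $C_1^N(V(\gamma))$, contradicting~\eqref{B.3} once $\gamma^3<\tfrac{3\gamma}{32}$. In the $1$-shock case, $\phi^N$ catches the auxiliary shock inside $V(\gamma)$ in the same way, and the product of their strengths contributes at least $\tfrac{7\gamma}{8}\cdot\tfrac{3\gamma}{32}$ to $Q_1^N(V(\gamma))$, contradicting~\eqref{B.5} once $\gamma<\tfrac{1}{4}$.

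The main obstacle is to make the ``tracing forward'' rigorous inside the piecewise-constant scheme, where a rarefaction of strength $\tfrac{3\gamma}{32}$ is in general a superposition of many smaller approximate $1$-rarefactions distributed along a fan of approximate characteristics, and where the nonlocal $3$-wave source can deform both the rarefaction and the shock along the way. These effects are controlled by the smallness~\eqref{B.6} of $\Delta_1^N(V(\gamma))=O(\gamma^3)$, which keeps the total $1$-wave strength along any traced curve essentially unchanged over distances of order $R$. The precise bookkeeping is carried out, as in~\cite{Glimm_Lax_1970}, by applying the approximate conservation laws of Section~4.1 to the trapezoidal region bounded on the left by $\phi^N$, on the right by the rightmost approximate $1$-characteristic issuing from $(x_\ast,t_\ast)$, and by the top and bottom edges of $V(\gamma)$; this gives lower bounds on $C_1^N$ or $Q_1^N$ of order $\gamma$ or $\gamma^2$ respectively, both incompatible with the $O(\gamma^3)$ bounds defining $V(\gamma)$.
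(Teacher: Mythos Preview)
Your overall strategy---assume a violation, trace an auxiliary approximate $1$-characteristic, force it to coalesce with $\phi^N$ inside $V(\gamma)$, and derive a contradiction with the $\gamma^3$ bounds on $C_1^N$ or $Q_1^N$---is the same as the paper's. But the execution has a genuine gap at the crucial step: you never establish that the traced curve is actually overtaken by $\phi^N$.

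Concretely, the ``rightmost approximate $1$-characteristic issuing from $(x_\ast,t_\ast)$'' has initial speed at least $\alpha\ts_1^N(x_\ast,t_\ast)$, and by the very contradiction hypothesis this exceeds $\dot\phi^N(t_\ast)-\tfrac{\alpha}{4}\gamma$; nothing prevents it from being $\geq\dot\phi^N(t_\ast)$ outright, in which case $\phi^N$ never catches it and the trapezoidal region never closes. Your assertion that ``all its speeds lie strictly below $\dot\phi^N(t_\ast)-\tfrac{11\alpha\gamma}{32}$'' is therefore unjustified, and the dichotomy ``either a $1$-rarefaction or a $1$-shock of strength $\geq\tfrac{3\gamma}{32}$'' is not well-posed (the net rise $>\tfrac{3\gamma}{16}$ could be spread over arbitrarily many small waves).

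The paper resolves this by (i) taking $x^*$ to be the \emph{infimum} of violation points, so that at $x^*$ there is necessarily a rarefaction fan containing a characteristic $\psi^N$ with speed \emph{exactly} $\dot\phi^N(t^*)-\tfrac{\alpha}{4}\gamma$; and (ii) proving two intermediate Claims that use the $Q_1^N$ bound to show every single $1$-shock trapped between $\phi^N$ and $\psi^N$ has strength $O(\gamma^{3/2})$, and hence the total shock mass that can enter $\psi^N$ from the left and accelerate it is $O(\gamma^{3/2})$. Only with this control does the speed gap of order $\gamma$ persist and force coalescence. Your appeal to ``approximate conservation laws on the trapezoidal region'' does not supply this, because those laws bound $C_1,S_1,\Delta_1$ in terms of entering and leaving waves but do not by themselves limit how much shock mass can be absorbed by $\psi^N$ along the way.
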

\begin{proof}[Proof of Fact \ref{fact:2}] 
Since $\Str \phi^N (t^N) \geq \gamma$, by the previous result, it holds that
\[
\Str \phi^N(t) > \gamma - 2 \gamma^3, \quad \forall\, t > t^N.
\]
Moreover, 
\begin{align}
\dot{\phi}^N(t^N) = & \frac{\alpha}{2} \big( \ts_1^N(\phi^N(t^N)-,t^N) + \ts_1^N(\phi^N(t^N) + , t^N) \big) \notag \\
= & \alpha \ts_1^N(\phi^N(t^N)+,t^N) + \frac{\alpha}{2} \Str \phi^N(t^N) \notag \\
> & \alpha \ts_1^N( \phi^N(t^N)+,t^N ) + \frac{\alpha}{2} \gamma. \label{B.7}
\end{align}
Suppose that on the contrary, for infinitely many $\ts_1^N$ in Case $(A)$, \eqref{B.5+} is violated at $t^* \in (t_0-R,t_0+R)$.
Since $\ts_1^N$ are piece-wise Riemann solutions to the Burgers equation, it is constant along each straight characteristic line.
Without loss of generality, one may suppose $t^*= n^* \Delta t^{N^*} +$ for some $N^*$, then one can choose
\[
x^* = \inf \{ \tilde x \in (\phi^N(t^*), \phi^N(t^*) + \frac{\alpha}{4} \gamma R) \mid \dot{\phi}^N(t^*) < \alpha \ts_1^N(\tilde x,t^*) + \frac{\alpha}{4} \gamma \}.
\]
By this choice, $x^* = m^* \Delta x^{N^*}$ for some $m^*$ and 
\[
\alpha \ts_1^N(x^*-,t^*) + \frac{\alpha}{4} \gamma \leq \dot{\phi}^N(t^*) < \alpha \ts_1^N(x^*+,t^*) + \frac{\alpha}{4} \gamma.
\]
By the construction of $(\ts_1^N, \ts_3^N)^T$, $\ts_1^N$ on 
\[
(x,t) \in (x^* - \dx , x^* + \dx) \times (t^*,t^*+\dt)
\]
would be a centered rarefaction wave.
Thus, there would be an approximate $1$-characteristic $\psi^N$ issuing from $(x^*,t^*)$ such that
\[
\dot{\psi}^N(t^*) = \dot{\phi}^N(t^*) - \frac{\alpha}{4} \gamma.
\]
In what follows, it will be shown that ${\psi}^N(t)$ and $\phi^N(t)$ would coalesce in $V(\gamma)$.
To this end, one may focus on the estimate of $\dot\psi^N$ in the later time $t > t^*$,
during which, the key disturbing factor is the $1$-shocks coming from the domain between $\psi^N$ and $\phi^N$, and entering $\psi^N$.
To deal with this, one may first show that

\begin{claim}
All $1$-shocks crossing $(\phi^N(t^*),\psi^N(t^*)) \times \{t^*\}$ possess a strength less than $\gamma^\frac32$ at least for all the cases that $N$ is large enough.
\end{claim}

\begin{proof}[{Proof of the Claim}] Suppose that on the contrary, $\eta^N(t)$ is an approximate $1$-characteristic such that 
\begin{gather*}
\Str \eta^N(t^*) \geq \gamma^\frac32,\\
\eta^N(t^*) \in (\phi^N(t^*),\psi^N(t^*)).
\end{gather*}
Now it is hoped that $\eta^N$ would coalesce with $\phi^N$ in $V(\gamma)$.
In fact, by the previous result
\begin{gather*}
\Str \eta^N(t) \geq \gamma^\frac32 - 2 \gamma^3, \quad \forall\, t > t^*,\\
\mathrm{TV}_{\eta^N} \dot{\eta}^N \leq \gamma^\frac32, 
\end{gather*}
while, by the definition of $x^*$ and $\psi^N$,
\begin{gather*}
\dot{\eta}^N(t^*) + \frac{\alpha}{4} \leq \dot{\phi}^N(t), \quad 
\eta^N(t^*) \in (\phi^N(t^*), \phi^N(t^*)+ \frac{\alpha}{4} \gamma R).
\end{gather*}
Therefore, 
\[
\dot{\eta}^N(t) - \gamma^\frac32 + \frac{\alpha}{4} \gamma \leq \dot{\phi}^N(t).
\]
As in Lemma \ref{lem:B.1}, almost surely for $\vth$, $\eta^N$ would coalesce with $\dot{\phi}^N$ in $V(\gamma)$ at least for large enough $N$, which would cause a shock collision of size at least $\frac{1}{2} \gamma^\frac52$.
This contradicts with one selection requirement of $V(\gamma)$, \eqref{B.5}, and completes the proof of the Claim. 
\end{proof} 

Furthermore, one has

\begin{claim}
The strength for any $1$-shock crossing $(\phi^N(t),\psi^N(t)) \times \{t\}$ for $t \geq t^*$ is less than $4 \gamma^\frac32$, at least for large enough $N$.
\end{claim}

\begin{proof}[Proof of the Claim]
Denote $\xi_n$ as the strength of the $1$-shock between $\phi^N(n\dt+ )$ and $\psi^N(n\dt+)$ at $n \dt +$, then
\begin{equation} \label{B.8}
\xi_n \leq \xi_{n-1} + \Delta_1^N(\dm^N_{m,n})
\end{equation}
for some $m$.
Suppose that by contrary, $n'\dt+ > t^*$ is the first time such that
\[
\xi_{n'} > 4 \gamma^\frac32.
\]
Then due to the result of the last Claim and \eqref{B.1}, \eqref{B.6} as well as \eqref{B.8}, it holds that
\[
\xi_{n'} < 8 \gamma^\frac32 + 2 \gamma^3,
\]
and there exists $n'',n'''$ with $n^* < n''' < n'' < n'$ such that
\begin{align}
3 \gamma^\frac32 < \xi_n \leq 4 \gamma^\frac32, \quad & \forall\, n'' \leq n < n',\\
2 \gamma^\frac32 < \xi_n \leq 3 \gamma^\frac32, \quad & \forall\, n''' \leq n < n''.
\end{align}
Then for each $n''' \leq n \leq n'$, there exists a diamond $\dm^N_{m,n}$ that produces the wave of strength $\xi_n$, namely,
\[
-\xi_n = \alpha_{m,n} + \beta_{m,n} + \delta \Delta^N_1(\dm^N_{m,n})
\]
with
\[
Q_1^N(\dm^N_{m,n}) = \max \{0,-\alpha_{m,n}\} \cdot \max\{0,-\beta_{m,n} \}
\]
\begin{enumerate}
\item For $\alpha_{m,n} < 0, \beta_{m,n} < 0$, without loss of generality,
one may assume that $|\alpha_{m,n}| > |\beta_{m,n}|$, then
\begin{align*}
\xi_{n-1} \geq & |\alpha_{m,n}| \geq \frac12 (|\alpha_{m,n}| + |\beta_{m,n}|) = \frac12 (\xi_n+\delta \Delta_1^N(\dm^N_{m,n})) \\
\geq & \frac12 (\xi_n - \Delta_1^N(\dm^N_{m,n})).
\end{align*}
Thus,
\[
Q_1^N(\dm^N_{m,n}) = |\alpha_{m,n}| |\beta_{m,n}| \geq |\alpha_{m,n}| \cdot ( \xi_n - |\alpha_{m,n}| - \Delta_1^N(\dm^N_{m,n}) ).
\]
By the property of the parabola and the above bounds of $\alpha_{m,n}$, it holds that
\[
Q_1^N(\dm^N_{m,n}) \geq \xi_{n-1} (\xi_n - \xi_{n-1} - \Delta_1^N(\dm^N_{m,n})).
\]
Then \eqref{B.8} leads to
\[
3 \xi_{n-1} \geq \xi_n + \xi_{n-1} - \Delta_1^N(\dm^N_{m,n}).
\]
Thus,
\begin{equation} \label{B.9.1}
3 Q_1^N(\dm^N_{m,n}) \geq (\xi_n + \xi_{n-1} - \Delta_1^N(\dm^N_{m,n})) (\xi_n - \xi_{n-1} - \Delta_1^N(\dm^N_{m,n}))
\end{equation}
\item For $\beta_{m,n} \geq 0$, due to the choice of $n'''$, it holds that $\xi_n > 15 \gamma^\frac32$.
Then \eqref{B.6} implies that
\[
- \xi_{n-1} < \alpha_{m,n} < 0.
\]
Therefore,
\[
\xi_n - \Delta_1^N(\dm^N_{m,n}) \leq - \alpha_{m,n} - \beta_{m,n} \leq - \alpha_{m,n} < \xi_{n-1},
\]
and 
\begin{equation} \label{B.9.2}
0 \geq (\xi_n + \xi_{n-1} - \Delta_1^N(\dm^N_{m,n})) (\xi_n - \xi_{n-1} - \Delta_1^N(\dm^N_{m,n}))
\end{equation}
\end{enumerate}
Adding up \eqref{B.9.1}--\eqref{B.9.2} for all $n''' < n \leq n'$ and noting that $\xi_{n'}$ is the largest one over $\xi_n$ and \eqref{B.6}, one can get
\begin{align*}
3 Q_1^N(V(\gamma)) \geq & \xi_{n'}^2 - \xi_{n'''}^2 - \xi_{n'} \gamma^3 \\
\geq & 7 \gamma^3 - 8 \gamma^\frac92 > 4 \gamma^3,
\end{align*}
which contradicts with \eqref{B.5} and completes the proof of the Claim. 
\end{proof}

Now one can calculate $\psi^N(t)$ for $t > t^*$.
Denote $\alpha_1, \dots, \alpha_k$ as the sequence of $1$-shocks entering $\psi^N$ from left before the possible coalesce of $\psi^N$ and $\phi^N$. Then at the time $\alpha_j$ entering $\psi^N$, the shock collision would be at lest 
\[
|\alpha_j| (\sum_{p<j} |\alpha_p| - c_j ),
\]
where $c_j$ denotes all the cancellation and $3$-wave influence before the entering.
Summing up this estimate at each entering time gives
\[
\gamma^3 > Q_1^N(V(\gamma)) \geq \frac12 (\sum_j |\alpha_j|)^2 - \frac12 (\sum_j |\alpha_j|^2 ) - 2 \gamma^3 \sum_j |\alpha_j|
\]
By the previous result, $\max |\alpha_j| \leq 4 \gamma^\frac32$, therefore
\[
(\sum_j |\alpha_j|)^2 - (2 \gamma^\frac32 + 2 \gamma^3)(\sum_j |\alpha_j|) < \gamma^3,
\]
which yields that
\[
\sum_j |\alpha_j| < 2 \gamma^\frac32.
\]
Meanwhile, the only factors that can increase $\dot{\psi}^N$ are the cancellation from right and $3$-wave influence on it and the $1$-shocks entering from left, 
all of which are bounded by $2 \gamma^3 + 2 \gamma^\frac32$.
Thus,
\begin{align*}
\dot{\psi}^N(t) > & \dot{\phi}^N(t^*) - \frac{\alpha}{4} \gamma + 3 \gamma^\frac32 \\
\geq & \dot{\phi}^N(t) - \frac{\alpha}{4} \gamma + 4 \gamma^\frac32, \quad \forall\, t > t^*.
\end{align*}
Since
\[
\psi^N(t^*) = x^* \in (\phi^N(t^*), \phi^N(t^*) + \frac{\alpha}{4} \gamma R),
\]
as in Lemma \ref{lem:B.1}, almost surely for $\vth$, $\phi^N$ and $\psi^N$ would coalesce in $V(\gamma)$ at a time $\hat{t}^N < 2 R \Lambda$ at least for large enough $N$.

For the $1$-rarefaction waves passing through $(\phi^N(t^*), \psi^N(t^*)) \times \{t^*\}$, since no $1$-rarefaction wave can cross an approximate $1$-characteristic, 
they would be demolished before $\hat{t}^N$ by either the cancellation or the $3$-wave influence, which implies that their total strength is at most $2 \gamma^3$.
Therefore,
\[
\dot{\psi}^N(t^*) \leq \alpha \ts^N_1(\phi^N(t^*)+, t^*+) + 2 \gamma^3 < \dot{\phi}^N(t^*) - \frac{\alpha}{2} \gamma + 2 \gamma^3,
\]
which contradicts with the definition of $\psi^N$ and completes the proof of the Fact.
\end{proof} 

Due to this fact and our assumption in Case $(A)$, $\phi^N$ and $\chi^N$ would coalesce in $V(\gamma)$ almost surely at least for large enough $N$
which would cause a $1$-shock entering of $\chi^N$ with strength at least $\gamma - 2 \gamma^3$, which is forbidden by \eqref{B.4}.
The only possibility is that $\phi^N$ locates exactly on $\chi^N$ at $t^*$.

Since the $1$-shock entering $\chi^N$, the $1$-wave cancellation happening on $\chi^N$ and the $3$-wave influence acting on $\chi^N$ can be bounded together by $3 \gamma^3$, it holds that
\[
\TVchi{} {\dot{\chi}^N} < 3 \gamma^3
\]
in $V(\gamma)$.

Now one can apply \eqref{B.5+}--\eqref{B.6+} at $t = t_0$ to get that all the $1$-shocks passing through
$( \chi^N(t_0) - \frac{\alpha}{4} \gamma R, \chi^N(t_0) + \frac{\alpha}{4} \gamma R  ) \times \{t_0\}$ 
would either be demolished by cancellation or $3$-wave influence, or enter $\chi^N$ in $V(\gamma)$ almost surely at least for large enough $N$, 
and all $1$-rarefaction waves would be demolished by cancellation or $3$-wave influence.
Thus, the total variation of $\ts_1^N$ along this interval can be bounded by $O(\gamma^3)$ which gives the desired equicontinuity result.\\[2pt]

\noindent\emph{Proof of Case $(B)$:}
The main idea in this part of the proof is that as it has been assumed that each $1$-shock in $V(\gamma)$ possesses a strength less than $\gamma$, 
while all the cancellation and $3$-wave influence are at most $2 \gamma^3$, 
using the $\gamma^3$ bound of the collision, one can show that 
the total variation of the speed of the approximate $1$-characteristics can only be $4 \gamma$, thus for each approximate $1$-characteristic near $\dot{\chi}^N$, if its speed is different from $\chi^N$ with a large order,
it would enter $\chi^N$ in the future,
which gives a small bound for rarefaction waves to cause the assumed speed difference,
or in the past, which is forbidden by  the construction process of the approximate characteristics.
Then one can estimate the oscillation of the characteristic speed and thus the oscillation of the solution.

As the first ingredient of the proof, one may show that

\begin{fact} \label{fact:3}
For each approximate $1$-characteristic $\phi^N$, the total strength of $1$-shocks entering it in $V(\gamma)$ is no more than $4 \gamma$.
\end{fact}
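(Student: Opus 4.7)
The plan is to mimic the collision-accounting argument used at the end of Case $(A)$'s proof of Fact \ref{fact:2}, but to exploit the new, coarser hypothesis of Case $(B)$, namely that every $1$-shock appearing in $V(\gamma)$ has strength at most $\gamma$. Fix an approximate $1$-characteristic $\phi^N$ and let $\alpha_1,\alpha_2,\dots,\alpha_k$ denote, in chronological order, the $1$-shocks which enter $\phi^N$ while $(\phi^N(t),t)\in V(\gamma)$ (we may deal with each of the two sides of $\phi^N$ separately and then add the bounds). Set $T=\sum_j|\alpha_j|$. The claim to establish is $T\leq 2\gamma$ on each side, hence $\leq 4\gamma$ in total.

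The key accounting is the following lower bound for the shock-collision measure $Q_1^N$ restricted to $\phi^N$. At the moment $\alpha_j$ arrives at $\phi^N$, the shock strength $s_j$ carried by $\phi^N$ satisfies
\[
  s_j \;\geq\; \sum_{p<j} |\alpha_p| \;-\; c_j,
\]
where $c_j$ is the total amount of $1$-wave cancellation occurring on $\phi^N$ together with $3$-wave influence acting on $\phi^N$ up to that instant; by \eqref{B.3} and \eqref{B.6} one has $c_j\leq C_1^N(V(\gamma))+\Delta_1^N(V(\gamma))\leq 2\gamma^3$. Consequently the collision produced when $\alpha_j$ meets $\phi^N$ contributes at least $|\alpha_j|\,s_j$ to $Q_1^N(V(\gamma))$, so summing over $j$ and using the standard identity $\sum_j|\alpha_j|\sum_{p<j}|\alpha_p|=\tfrac12(T^2-\sum_j|\alpha_j|^2)$ gives
\[
  \gamma^3 \;>\; Q_1^N(V(\gamma)) \;\geq\; \tfrac12\bigl(T^2-\textstyle\sum_j|\alpha_j|^2\bigr)\;-\;2\gamma^3\,T.
\]

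At this step the Case $(B)$ hypothesis is brought in: since $|\alpha_j|\leq\gamma$ for every $j$, we have $\sum_j|\alpha_j|^2\leq\gamma\,T$, and the preceding line becomes the scalar quadratic inequality
\[
  T^{2} \;-\; \bigl(\gamma+4\gamma^{3}\bigr)\,T \;-\; 2\gamma^{3} \;<\; 0,
\]
whose positive root is $\tfrac12\bigl[(\gamma+4\gamma^3)+\sqrt{(\gamma+4\gamma^3)^2+8\gamma^3}\bigr] = \gamma + O(\gamma^{3/2})$. For the values of $\gamma$ considered (taken small enough at the outset), this yields $T<2\gamma$ on the side under consideration. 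Repeating the argument on the other side of $\phi^N$ and adding the two bounds finishes the proof of Fact \ref{fact:3}.

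The main delicate point, and the only place where a nontrivial verification is required, is the structural lower bound $s_j\geq\sum_{p<j}|\alpha_p|-c_j$. It says that once a $1$-shock has merged with $\phi^N$ its strength is never released back into free $1$-waves: the only mechanisms which can erode the shock on $\phi^N$ are intra-family cancellation (already booked in $C_1^N$) and the nonlocal $3$-wave influence (already booked in $\Delta_1^N$). This is exactly the bookkeeping of the approximate conservation laws \eqref{4.*1}--\eqref{4.**3} applied along $\phi^N$, combined with the continuation rule recalled in Appendix C which forces $1$-shocks to remain attached once they have joined $\phi^N$; both ingredients are already available, so beyond recording this carefully the rest of the argument is the quadratic computation above.
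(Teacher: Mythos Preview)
Your argument is correct. You recycle the quadratic collision-accounting already used at the end of Case~$(A)$ (immediately after the two Claims in the proof of Fact~\ref{fact:2}), replacing the bound $\max_j|\alpha_j|\leq 4\gamma^{3/2}$ available there by the Case~$(B)$ hypothesis $\max_j|\alpha_j|\leq\gamma$. The paper's own proof of Fact~\ref{fact:3} takes a different route: it argues by contradiction that if the total entering strength exceeded $4\gamma$, then since each jump is at most $\gamma$ and the cancellation plus $3$-wave influence is at most $2\gamma^3$, there would be an intermediate time $t^*$ with $\tfrac32\gamma<\Str\phi^N(t^*)<\tfrac52\gamma+\gamma^3$; from $t^*$ onward $\Str\phi^N$ stays above $\tfrac32\gamma-2\gamma^3$ (by the Fact~\ref{fact:1} mechanism), while at least $\tfrac32\gamma-3\gamma^3$ worth of further shocks still enter, forcing $Q_1^N(V(\gamma))\geq(\tfrac32\gamma-2\gamma^3)(\tfrac32\gamma-3\gamma^3)\gg\gamma^3$, a contradiction with \eqref{B.5}. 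Your direct quadratic approach is arguably cleaner and actually gives the sharper bound $T<\gamma+O(\gamma^2)$ without any need to split into left and right sides (shocks entering from either side contribute to $s_j$, so the splitting only costs you a factor of~$2$). Two cosmetic remarks: the positive root of your quadratic is $\gamma+O(\gamma^2)$ rather than $\gamma+O(\gamma^{3/2})$, and the side-splitting is unnecessary.
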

\begin{proof}[Proof of Fact \ref{fact:3}] 
The proof is quite similar to the one for Fact \ref{fact:1} given above.

Suppose, on the contrary, that the total strength is stronger than $4\gamma$.
Since the cancellation and $3$-wave influence can be bounded by $2\gamma^3$, while each $1$-shock is weaker than $\gamma$, 
one can get a time $t^*$ that 
\[
\frac32 \gamma < \Str \phi^N(t^*) < \frac52 \gamma + \gamma^3, 
\]
and there would be a sequence of $1$-shocks $\alpha_1, \dots, \alpha_k$ with total strength at least $\frac32 \gamma - 3 \gamma^3$ that enters $\phi^N$ in $V(\gamma)$ later than $t^*$.
Due to the previous result,
\[
\Str \phi^N(t) > \frac32 \gamma - 2 \gamma^3, \quad \forall\, t > t^*.
\]
Then the collision can be estimated from below as
\[
Q_1^N(V(\gamma)) \geq (\frac32 \gamma - 2 \gamma^3) \cdot (\sum_j |\alpha_j|) > 3 \gamma^2,
\]
which contradicts with \eqref{B.6} and completes the proof of Fact \ref{fact:3}. 
\end{proof} 

Using Fact \ref{fact:3} and the bounds of cancellation and $3$-wave influence, one can conclude that
\begin{equation} \label{B.10}
\mathrm{TV}_{\phi^N \cap V(\gamma)} \dot{\phi}^N < 4 \gamma + 2 \gamma^3,
\end{equation}
for any approximate $1$-characteristic $\phi^N$.

As the second ingredient of the proof, one can prove that all approximate $1$-characteristics would roughly point away from each other.

\begin{fact} \label{fact:4}
For any given $t' \leq t_0$ with $t' = n_1 \dt +$ for some $n_1$, and $x_1,x_2$ with
\[
\chi^N(t') \leq x_1 < x_2 < \chi^N(t') + 24 \gamma R,
\]
it holds that
\begin{equation} \label{B.11}
\dot{\phi}^N_{x_1,t'}(t') \leq \dot{\phi}^N_{x_2,t'}(t') + 13 \gamma,
\end{equation}
where $\phi^N_{x,t}$ is an arbitrary approximate $1$-characteristic issuing from $(x,t)$.
\end{fact}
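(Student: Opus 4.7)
The plan is a proof by contradiction. Suppose $\dot\phi^N_{x_1,t'}(t') > \dot\phi^N_{x_2,t'}(t') + 13\gamma$, and write $\phi_i := \phi^N_{x_i,t'}$ for $i=1,2$. The strategy is to show that this large initial speed gap forces $\phi_1$ and $\phi_2$ to coalesce before exiting $V(\gamma)$, and then to use the approximate conservation laws together with Fact \ref{fact:3} to bound the total $1$-wave content strictly between them at $t'$, obtaining a contradiction.

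First, by \eqref{B.10}, the total variation of each $\dot\phi_i$ along $\phi_i \cap V(\gamma)$ is at most $4\gamma + 2\gamma^3$. Hence, so long as both characteristics remain inside $V(\gamma)$ for $t \geq t'$,
\[
\dot\phi_1(t) - \dot\phi_2(t) \geq 13\gamma - 2(4\gamma + 2\gamma^3) > 4\gamma,
\]
after shrinking $R(\gamma)$ if necessary to absorb the $\gamma^3$ term. Since $\phi_1(t') < \phi_2(t')$ and $\phi_2 - \phi_1$ consequently contracts at rate exceeding $4\gamma$, while initially $\phi_2(t') - \phi_1(t') < 24\gamma R$, the two characteristics must coalesce at some $t^\sharp < t' + 6R$, still inside $V(\gamma)$.

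Next, inspect the $1$-waves lying strictly between $\phi_1$ and $\phi_2$ at time $t'$. By the construction convention recalled in Appendix C, no $1$-rarefaction may cross an approximate $1$-characteristic of the same family, so every such rarefaction must vanish before $t^\sharp$ through either $1$-wave cancellation or $3$-wave influence; by \eqref{B.3} and \eqref{B.6}, the total rarefaction strength $N$ in the strip at $t'$ is therefore at most $2\gamma^3$. Each $1$-shock in the strip must similarly disappear by $t^\sharp$, by entering $\phi_1$ from its right, entering $\phi_2$ from its left, being cancelled against a rarefaction, or being altered by $3$-wave influence. Since Fact \ref{fact:3} caps each of the two ``entering'' totals by $4\gamma$, the total absolute shock strength $P$ in the strip at $t'$ satisfies $P \leq 8\gamma + 2\gamma^3$.

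Finally, since the speed of an approximate $1$-characteristic equals $\alpha\sigma_1^N$ at the appropriate state (with the Rankine--Hugoniot average on a shock segment), the net difference $\dot\phi_2(t') - \dot\phi_1(t')$ is the signed algebraic sum of jumps across the intervening $1$-waves, namely $N - P$. Combining the two bounds gives $\dot\phi_1(t') - \dot\phi_2(t') = P - N \leq 8\gamma + 2\gamma^3 < 13\gamma$ for $\gamma$ small, contradicting the standing assumption. The main obstacle will be the bookkeeping in the preceding paragraph: in the discrete random-choice scheme one must precisely identify, at time $t'$, which shocks and rarefactions occupy the strip between $\phi_1$ and $\phi_2$; account for shock--shock mergers (which preserve total absolute strength while reducing the wave count); handle the terminal coalescence at $t^\sharp$; and interpret ``strength'' consistently with the convention used in \eqref{B.10} and Fact \ref{fact:3}.
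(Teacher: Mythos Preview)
Your proposal is correct and follows essentially the same argument as the paper: assume the speed gap exceeds $13\gamma$, use \eqref{B.10} to propagate a persistent gap of at least $4\gamma$, conclude coalescence within $V(\gamma)$ since the initial separation is below $24\gamma R$, then bound the total $1$-shock strength in the strip by $8\gamma + 2\gamma^3$ via Fact~\ref{fact:3} together with \eqref{B.3}--\eqref{B.6}, which contradicts the assumed gap. Your explicit bound on the rarefaction content is harmless but unnecessary, since $P - N \leq P$ already suffices; the paper simply observes that $\dot\phi^N_{y,t'}(t')$ can only decrease (as $y$ increases) across $1$-shocks, so the drop from $x_1$ to $x_2$ is controlled by $P$ alone.
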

\begin{proof}[Proof of Fact \ref{fact:4}] 
Suppose by contrary that \eqref{B.11} is violated, then by \eqref{B.10} 
\[
\dot{\phi}^N_{x_1,t'}(t) < \dot{\phi}^N_{x_2,t'}(t) + 4 \gamma, \quad \forall\, t> t'.
\]
Since the initial distance is less than $24 \gamma R$, almost surely for large enough $N$, 
$\phi^N_{x_1,t'}$ and $\phi^N_{x_2,t'}$ would coalesce in $V(\gamma)$.

All $1$-shocks passing through $[x_1,x_2] \times \{t'\}$ would either be demolished by the cancellation or $3$-wave influence, or entering $\phi^N_{x_1,t'}$ or $\phi^N_{x_2,t'}$.
Therefore due to Fact \ref{fact:3}, their total strength can be bounded by $8 \gamma + 2 \gamma^3$.

Meanwhile, $\dot{\phi}^N_{y,t'}$ increases with $y$ going from $x_1$ to $x_2$ only at the time when $y$ passing a $1$-shock.
Thus,
\[
\dot{\phi}^N_{x_1,t'}(t') < \dot{\phi}^N_{x_2,t'}(t') + 9 \gamma,
\]
which contradicts with the assumption at the beginning of the proof. 
\end{proof} 

Using Fact \ref{fact:4}, taking $t' = t_0$ and $x_1 = \chi^N(t_0)$, one can get
\[
\alpha \ts_1^N(y,t_0) = \dot{\phi}^N_{y,t_0}(t_0) \geq \dot{\chi}^N(t_0) - 13\gamma, \quad \forall\, \chi^N(t_0) < y < \chi^N(t_0) + 24\gamma R,
\]
which provides a lower bound of $\ts_1^N$ on the right of $\chi^N$.

To show the other side of the result, a rough idea is that if an approximate $1$-characteristic moves much faster than $\chi^N$ at $t_0$, then due to \eqref{B.10}, it always moves much faster and would cross $\chi^N$ in the past which is forbidden by the construction of the approximate characteristics.
But in our method, it is not always plausible to find the previous part for one given approximate characteristic.
Thus one should change the strategy to an equivalent one that one may search for one approximate characteristic issuing from a point earlier than $t_0$ that reaches the interval $[\frac12 \gamma R. 10 \gamma R]$ at $t_0$ with a desired speed, 
which in fact is to find the previous part for one of the approximate characteristics at  $[\frac12 \gamma R. 10 \gamma R] \times \{t_0 \} $.
Then the speed of this characteristic can provide an upper bound for all others on its left due to \eqref{B.11}.

To be more specific, one may look at one of the approximate $1$-characteristic $\phi^N_{y,R/\Lambda} $ passing through $(y,t_0- R/\Lambda)$ for 
\[
y \in [\chi^N(t_0- R/\Lambda), \chi^N(t_0-R/\Lambda) + 24 \gamma R]
\]
and denote a straight line 
\[
\Phi^N_y(t) = \dot{\phi}^N_{y,t_0-R/\Lambda} ( t - t_0 + R/\Lambda ).
\]
Taking $x_1 = \chi^N(t_0-R/\Lambda), x_2 = y$ and $t' = t_0 - R / \Lambda$ in \eqref{B.11}, one can get
\[
\dot{\chi}^N(t_0 - R/ \Lambda) \leq \dot{\phi}^N_{y,t_0-R/\Lambda} (t_0 - R/ \Lambda) + 13 \gamma.
\]
Due to \eqref{B.10} for $\phi^N = \chi^N$, it holds
\[
\chi^N(t_0) - \chi^N(t_0 - R/ \Lambda) - 5 \gamma R \leq \Phi^N_y(t_0) - y + 13 \gamma R.
\]
Thus, 
\[
\Phi^N_y(t_0) \geq \chi^N(t_0) - 18 \gamma R + (y - \chi^N(t_0 - R/ \Lambda) ).
\]
Noting that
\[
\Phi^N_{\chi^N(t_0-R/\Lambda)} \leq \chi^N(t_0) + 4 \gamma R + 2 \gamma^3 R 
\]
and that as $y$ increases continuously at the time crossing a $1$-rarefaction wave, while decreases sharply at the time crossing a $1$-shock, there exists $y^N \in [\chi^N(t_0-R/\Lambda), \chi^N(t_0 - R/\Lambda + 24 \gamma R)]$,
such that
\[
\Phi^N_{y^N}(t_0) = \chi^N(t_0) + 5 \gamma R.
\]
By \eqref{B.10} for $\phi^N_{y^N}$ and the definition of $\Phi^N_{y^N}$, one can get
\[
|\phi^N_{y^N}(t_0) - \Phi^N_{y^N}(t_0)| < 4 \gamma R + 2 \gamma^3 R,
\]
and 
\begin{equation} \label{B.12}
\chi^N(t_0) + \frac12 \gamma R < \phi^N_{y^N}(t_0) < \chi^N(t_0) + 10 \gamma R.
\end{equation}
Then one may get an upper bound for $\dot{\phi}^N_{y^N}$, 
for which one needs only to get an average speed for $t \in [t_0 - R/ \Lambda,t_0] $ and apply \eqref{B.10}.
In fact, by the second inequality of \eqref{B.12},
and that $\phi^N$ lies on the right of $\chi^N$, it holds that
\[
\frac{\phi^N_{y^N}(t_0) - \phi^N_{y^N}(t_0 - R/ \Lambda)}{R / \Lambda} < \frac{\chi^N(t_0) - \chi^N(t_0-R/ \Lambda)}{R / \Lambda} + 10 \gamma.
\]
Thus, 
\[
\dot{\phi}^N_{y^N} (t_0) < \dot{\chi}^N(t_0) + 19 \gamma.
\]
Now for each approximate $1$-characteristic $\psi^N$ passing through $[\chi^N(t_0), \chi^N(t_0) + \frac12 \gamma R]$, 
since it lies on the left of $\phi^N_{y^N}$, using \eqref{B.11} for $x_1 = \psi^N(t_0), x_2 = \phi^N(t_0)$ and $t=t_0$ leads to
\[
\dot{\psi}^N(t_0) \leq \dot{\phi}^N(t_0) + 13 \gamma < \dot{\chi}^N(t_0) + 32 \gamma,
\]
which yields an upper bound for the speed of all characteristics, and thus for $\ts_1^N$, near $\chi^N$.
Similar results can be proved on the left of $\chi^N$.
This completes the proof of Case $(B)$ and the lemma. \qedsymbol

\subsection*{Proof of Lemma \protect{\ref{lem:4.5}}: }
As a direct application of Lemma \ref{lem:4.4}, one can get

\begin{cor}
There exists a subsequence of approximate solutions such that for all but countable $t$,
\[
\lim_N \ts_1^N( \chi^N(t) \pm 0, t) = \ts_1 (\chi(t) \pm 0, t).
\] 
\end{cor}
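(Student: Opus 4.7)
The plan is to read the corollary directly off the last assertion of Lemma \ref{lem:4.4}, which already delivers precisely this pointwise limit along an approximate $1$-characteristic $\chi$; the only new task is to ensure that the exceptional time set can be taken countable (not merely null) and that a single subsequence works uniformly.

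First, I would fix once and for all the subsequence furnished by Lemma \ref{lem:4.4}. For every $t$ outside the exceptional set $E\subset[T_0,T_0+T_*]$ of that lemma, the one-sided limits
\[
\lim_N \ts_1^N(\chi^N(t)\pm 0,t) = \ts_1(\chi(t)\pm 0,t)
\]
are immediate from the lemma's conclusion, so the corollary reduces to identifying $E$ as countable. I would write $E$ as a union of three ingredients, each countable: (i) the atoms of the four weak-$*$ limit Borel measures $\rmd C_1,\rmd\Delta_1,\rmd S(\chi),\rmd Q_1$ introduced in Section \ref{ssc:4.2}, which together contribute only countably many atoms since each is a finite measure on a compact interval; (ii) the jump set of $\dot{\chi}$, countable because $\dot{\chi}$ has bounded variation by Lemma \ref{lem:4.3}; (iii) the set of times $t$ at which $\ts_1(\cdot,t)$ fails to be one-sidedly continuous at $x=\chi(t)$, which is at most countable because the shock set of the BV entropy solution $\ts_1$ is covered by at most countably many Lipschitz arcs, and their intersections with the Lipschitz curve $\chi$ (from Lemma \ref{lem:4.2}) yield a countable set.

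Second, I would observe that the extraction in Lemma \ref{lem:4.4} is carried out globally, via weak-$*$ compactness of $\rmd C_1^N,\rmd\Delta_1^N,\rmd S^N,\rmd Q_1^N$ and Helly-type selection on $\ts_1^N$, rather than pointwise in $t$. Consequently the same single subsequence of $N$'s validates the stated limit for every $t\notin E$ simultaneously, and no further diagonal refinement is required.

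The only step deserving scrutiny is item (iii), namely that the $t$-set where $\ts_1(\cdot,t)$ jumps at $\chi(t)$ is countable rather than merely of measure zero; this is exactly where the BV structure of the entropy solution does work, through the standard fact that its discontinuity set is a countable union of Lipschitz curves. All analytic substance -- in particular the one-sided equicontinuity on either side of $\chi^N$ and the passage from $L^1$ convergence of $\ts_1^N\to\ts_1$ (Proposition \ref{prop:cons}) plus $\chi^N\to\chi$ (Lemma \ref{lem:4.2}) to convergence of traces -- was already carried out inside Lemma \ref{lem:4.4}; the corollary is a clean repackaging tailored to the shock-speed identity needed in the proof of Lemma \ref{lem:4.5}.
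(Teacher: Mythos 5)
Your proposal is circular precisely where the paper's proof does its real work. You read the trace convergence $\lim_N \ts_1^N(\chi^N(t)\pm 0,t)=\ts_1(\chi(t)\pm 0,t)$ ``directly off the last assertion of Lemma \ref{lem:4.4}''. It is true that the displayed limit appears in that lemma's statement, but the proof of Lemma \ref{lem:4.4} given in Appendix B establishes only the one-sided equicontinuity of $\ts_1^N$ on either side of $\chi^N$ (the oscillation bounds of Cases $(A)$ and $(B)$); the limit itself is proved nowhere except in this very corollary. A proof of the corollary that merely cites Lemma \ref{lem:4.4} therefore leaves the trace convergence unestablished. The two ingredients you are missing are exactly the paper's argument: first, the uniform bound on $\TVchi{}\, \ts_1^N(\chi^N(\cdot)\pm 0,\cdot)$ from Section 4.1, which by Helly's selection principle yields a further subsequence with $\ts_1^N(\chi^N(t)\pm 0,t)\to U^\pm(t)$ for some BV functions $U^\pm$; second, the identification $U^\pm(t)=\ts_1(\chi(t)\pm 0,t)$ via the estimate
\begin{align*}
|U^\pm(t)-\ts_1(\chi(t)\pm y,t)| \leq\ & |U^\pm(t)-\ts_1^N(\chi^N(t)\pm 0,t)| + |\ts_1^N(\chi^N(t)\pm 0,t)-\ts_1^N(\chi(t)\pm y,t)| \\
& + |\ts_1^N(\chi(t)\pm y,t)-\ts_1(\chi(t)\pm y,t)|,
\end{align*}
in which the first term is controlled by the Helly convergence, the second by the one-sided equicontinuity of Lemma \ref{lem:4.4} for $y$ small, and the third, for almost every $y$, by $\|\ts_1^N-\ts_1\|_{L^1}\to 0$; letting $y\to 0$ gives the claim. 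Without the first step you have no candidate limit for the traces, and without the second you have no bridge between the traces of $\ts_1^N$ along $\chi^N$ and those of the limit solution along $\chi$ --- $L^1$ convergence alone says nothing about pointwise values on a curve.

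Two smaller points. Your item (iii), invoking the BV structure theorem to control the set of $t$ where $\ts_1(\cdot,t)$ jumps at $\chi(t)$, is not needed: the one-sided limits $\ts_1(\chi(t)\pm 0,t)$ exist for \emph{every} $t$ simply because $\ts_1(\cdot,t)$ is BV in $x$, and the countable exceptional set in the paper comes only from the atoms of the limit measures $\rmd C_1,\rmd\Delta_1,\rmd S(\chi),\rmd Q_1$ along $\chi$, where the equicontinuity of Lemma \ref{lem:4.4} may fail. Also, your claim that no further subsequence is required is at odds with the paper: one further extraction is made here, for the Helly selection producing $U^\pm$.
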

\emph{Proof of the Corollary:}
In Section 4.1, it is proved that $\TVchi{} \ts_1^N(\chi^N(\cdot) \pm 0, \cdot)$ is uniformly bounded.
Thus, by Helly's selection principle, there exists a subsequence such that
\[
\lim_N \ts_1^N(\chi^N(t) \pm 0, t) = U^\pm (t),
\]
for some BV function $U^\pm$.

Now, one can complete the proof just by noting that
\begin{align*}
& | U^\pm(t) - \ts_1(\chi(t) \pm y,t ) | \\
\leq & |U^\pm(t) - \ts_1^N(\chi^N(t) \pm 0, t)| \\
& + | \ts_1^N(\chi^N(t) \pm 0,t) - \ts_1^N( \chi(t) \pm y, t) | \\
& + | \ts_1^N(\chi(t) \pm y, t) - \ts_1(\chi(t) \pm y,t) |,
\end{align*}
where on the right hand side, for each $\varepsilon>0$, the first term is bounded by $\varepsilon/3$ for all large enough $N$ in the subsequence,
the second term is bounded by $\varepsilon/3$ for any small enough $y$ due to Lemma \ref{lem:4.4},
while the third one is bounded by $\varepsilon/3$ for almost all $y$, since $\| \ts_1^N - \ts_1 \|_{L^1} \to 0$ as $N \to \infty$. \qedsymbol

With this Corollary in hand, one can take $N \to \infty$ for the Rankine-Hugoniot condition in $\ts_1^N$ and completes the proof.

\section{Approximate Characteristics}
In this appendix, some details in the construction of the approximate characteristics are explained.
And based on these construction, some estimates, especially \eqref{4.*1}--\eqref{4.**3} for half diamonds, are checked.

First, as in Page 30 of \cite{Glimm_Lax_1970}, there are roughly $16$ cases,  in each of which one should assign the continuation of a line segment in an approximate $1$-characteristic. 
See Figure \ref{fig:C.1} for eight of them that the approximate $1$-characteristic, which is marked by dashed lines, enters from the southeast edges. 
The other eight cases can be analyzed similarly.
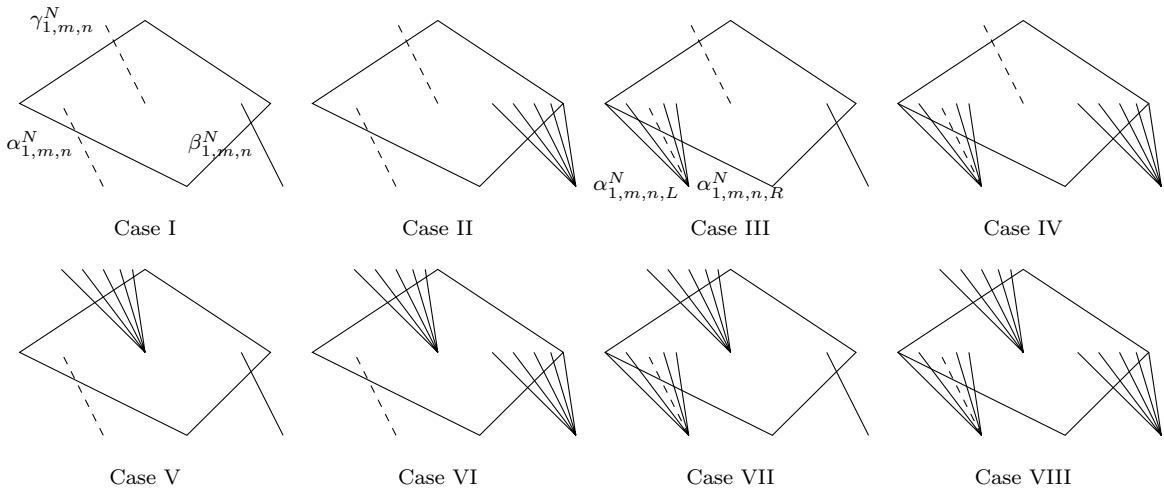
\begin{figure}[htbp]
\scriptsize
\centering
\begin{tikzpicture}[scale = .55]

    \draw[dashed] (0,0) -- ++(-1,2) node[pos=1,left] {$\gamma^N_{1,m,n}$};
    \draw[dashed] (-1,-2) -- ++(-1,2) node[pos=0.5,left] {$\alpha^N_{1,m,n}$}; 
    \draw (3.3,-2) --  ++(-1,2) node[pos=0.5,left] {$\beta^N_{1,m,n}$};
    \draw (-3,0) -- (1,-2) -- (3,0) -- (0,2) -- cycle;
    \node (I) at (0,-3) {Case I};

    \draw[dashed] (7,0) -- ++(-1,2);
    \draw[dashed] (6,-2) -- ++(-1,2); 
    \draw (10.3,-2) -- +(-1,2) (10.3,-2)-- +(-2,2) (10.3,-2)-- +(-1.5,2)  (10.3,-2)-- +(-0.6,2) (10.3,-2)-- +(-0.3,2);
    \draw (4,0) -- (8,-2) -- (10,0) -- (7,2) -- cycle;
    \node (II) at (7,-3) {Case II};

    \draw[dashed] (14,0) -- ++(-1,2);
    \draw[dashed] (13,-2) -- ++(-1,2); 
    \draw (13,-2) -- +(-2,2) node[pos=0,left] {$\alpha^N_{1,m,n,L}$} (13,-2) -- +(-1.5,2) (13,-2) -- +(-0.6,2) (13,-2) -- +(-0.3,2) node[pos=0,right] {$\alpha^N_{1,m,n,R}$};   
    \draw (17.3,-2) --  ++(-1,2);
    \draw (11,0) -- (15,-2) -- (17,0) -- (14,2) -- cycle;
    \node (III) at (14,-3) {Case III};

    \draw[dashed] (21,0) -- ++(-1,2);
    \draw[dashed] (20,-2) -- ++(-1,2); 
    \draw (20,-2) -- +(-2,2) (20,-2) -- +(-1.5,2) (20,-2) -- +(-0.6,2) (20,-2) -- +(-0.3,2);   
    \draw (24.3,-2) -- +(-1,2) (24.3,-2)-- +(-2,2) (24.3,-2)-- +(-1.5,2)  (24.3,-2)-- +(-0.6,2) (24.3,-2)-- +(-0.3,2);
    \draw (18,0) -- (22,-2) -- (24,0) -- (21,2) -- cycle;
    \node (IV) at (21,-3) {Case IV};

    \draw (0,-6) -- +(-1,2) (0,-6)-- +(-2,2) (0,-6)-- +(-1.5,2)  (0,-6)-- +(-0.6,2) (0,-6)-- +(-0.3,2);
    \draw[dashed] (-1,-8) -- ++(-1,2); 
    \draw (3.3,-8) --  ++(-1,2);
    \draw (-3,-6) -- (1,-8) -- (3,-6) -- (0,-4) -- cycle;
    \node (V) at (0,-9) {Case V};

    \draw (7,-6) -- +(-1,2) (7,-6)-- +(-2,2) (7,-6)-- +(-1.5,2)  (7,-6)-- +(-0.6,2) (7,-6)-- +(-0.3,2);
    \draw[dashed] (6,-8) -- ++(-1,2); 
    \draw (10.3,-8) -- +(-1,2) (10.3,-8)-- +(-2,2) (10.3,-8)-- +(-1.5,2)  (10.3,-8)-- +(-0.6,2) (10.3,-8)-- +(-0.3,2);
    \draw (4,-6) -- (8,-8) -- (10,-6) -- (7,-4) -- cycle;
    \node (VI) at (7,-9) {Case VI};

    \draw (14,-6) -- +(-1,2) (14,-6)-- +(-2,2) (14,-6)-- +(-1.5,2)  (14,-6)-- +(-0.6,2) (14,-6)-- +(-0.3,2);
    \draw[dashed] (13,-8) -- ++(-1,2); 
    \draw (13,-8) -- +(-2,2) (13,-8) -- +(-1.5,2) (13,-8) -- +(-0.6,2) (13,-8) -- +(-0.3,2);   
    \draw (17.3,-8) --  ++(-1,2);
    \draw (11,-6) -- (15,-8) -- (17,-6) -- (14,-4) -- cycle;
    \node (VII) at (14,-9) {Case VII};

    \draw (21,-6) -- +(-1,2) (21,-6)-- +(-2,2) (21,-6)-- +(-1.5,2)  (21,-6)-- +(-0.6,2) (21,-6)-- +(-0.3,2);
    \draw[dashed] (20,-8) -- ++(-1,2); 
    \draw (20,-8) -- +(-2,2) (20,-8) -- +(-1.5,2) (20,-8) -- +(-0.6,2) (20,-8) -- +(-0.3,2);   
    \draw (24.3,-8) -- +(-1,2) (24.3,-8)-- +(-2,2) (24.3,-8)-- +(-1.5,2)  (24.3,-8)-- +(-0.6,2) (24.3,-8)-- +(-0.3,2);
    \draw (18,-6) -- (22,-8) -- (24,-6) -- (21,-4) -- cycle;
    \node (VIII) at (21,-9) {Case VIII};

\end{tikzpicture}
\caption{Eight cases of waves in one mesh diamond}
\label{fig:C.1}
\end{figure}

For Cases I--IV, where $\gammah < 0$, one has no other choice but to choose the leaving $1$-shock as the continuation of the approximate $1$-characteristic.
Meanwhile, \eqref{4.*1}--\eqref{4.**3} can be shown as follows.

\subsection*{Case I} ($\alpha^N_{1,m,n} < 0, \beta^N_{1,m,n} < 0, \gamma^N_{1,m,n} < 0$): 
In this case, 
\begin{gather*}
E_1^+(\dm^N_{m,n}) = 0, \quad  E_1^-(\dm^N_{m,n}) = \alpha^N_{1,m,n} + \beta^N_{1,m,n}, \\
L_1^+(\dm^N_{m,n}) = 0, \quad  L_1^-(\dm^N_{m,n}) = \gamma^N_{m,n}, \\
C_1(\dm^N_{m,n}) = 0, \quad  \Delta_1(\dm^N_{m,n}) = |\gamma^N_{1,m,n} - \alpha^N_{1,m,n} - \beta^N_{1,m,n}|,
\end{gather*}
and 
\begin{gather*}
E_1^\pm(\dml) = L_1^+(\dml) = S_1(\dml) = C_1^\pm(\dml) = \Delta_1(\dml) = 0, \\
E_1^+(\dmr) = L_1^+(\dmr) = C_1^+(\dmr) = 0, \\
E_1^-(\dmr) = \beta^N_{1,m,n}, \quad S_1(\dmr) = \gamma^N_{1,m,n} - \alpha^N_{1,m,n}, \\
C_1^-(\dmr) = 0, \quad \Delta_1(\dmr) = \Delta_1(\dmh).
\end{gather*}
Then it is easy to check \eqref{4.*1}--\eqref{4.**3}.

\subsection*{Case II} 
($\alphah < 0, \betah > 0, \gammah<0$):
In this case, 
\begin{gather*}
E_1^+(\dmh) = \betah, \quad E_1^-(\dmh) = \alphah, \\
L_1^+(\dmh) = 0, \quad L_1^-(\dmh) = \gammah,\\
C_1(\dmh) = \min\{ -\alphah, \betah \}, \quad \Delta_1(\dmh) = \big| |\gammah | - |\alphah| + |\betah| \big|,
\end{gather*}
and 
\begin{gather*}
E_1^\pm(\dml) = L_1^+(\dml) = S_1(\dml) = C_1^\pm(\dml) = 0, \\
E_1^-(\dmr) = C_1^-(\dmr) = 0, \\
E_1^+(\dmr) = \betah, \quad L_1^+(\dmr) = 0, \\
C_1^+(\dmr) = C_1(\dmh), \quad \Delta_1(\dmr) = \Delta_1(\dmh), \\
S_1(\dmr) = \min\{ 0, \gammah - \alphah \}
\end{gather*}
Now \eqref{4.*1}--\eqref{4.**3} follow easily.

\subsection*{Case III}
($\alphah = \alphal + \alphar > 0, \betah < 0, \gammah < 0$):
In this case,
\begin{gather*}
E_1^+(\dmh) = \alphah, \quad E_1^-(\dmh) = \betah, \\
L_1^+(\dmh) = 0, \quad L_1^-(\dmh) = \gammah, \\
C_1(\dmh) = \min \{ \alphah, - \betah \}, \quad \Delta_1(\dmh) = \big| \gammah + \alphah - \betah \big|,
\end{gather*}
and
\begin{gather*}
E_1^+(\dml) = \alphal, \quad E_1^-(\dml) = 0, \\
L_1^+(\dml) =0, \quad S_1(\dml) = 0, \quad C_1^-(\dml) = 0, \\
E-1^+(\dmr) = \alphar, \quad E_1^-(\dmr) = \betah, \\
L_1^+(\dmr) = 0, \quad S_1(\dmr) = \gammah.
\end{gather*}
\subsubsection*{Subcase III.I}($-\betah > \alphah$):
\begin{gather*}
C_1(\dmh) = \alphah \\
C_1^+(\dml) = \alphal, \quad \Delta_1(\dml) = 0, \\
C_1^+(\dmr) = \alphar, \quad C_1^-(\dmr) = \alphah, \quad \Delta_1(\dmr) = \Delta_1(\dmh).
\end{gather*}
Now \eqref{4.*1}--\eqref{4.**3} can be easily checked.
\subsubsection*{Subcase III.II}($\alphah \geq - \betah > \alphar$):
\begin{gather*}
C_1(\dmh) = - \betah, \\
C_1^+(\dml) = - \betah - \alphar, \quad \Delta_1(\dml) = \alphah + \betah, \\
C_1^+(\dmr) = \alphar, \quad C_1^-(\dmr) = - \betah, \quad \Delta_1(\dmr) =  - \gammah,
\end{gather*}
and \eqref{4.*1}--\eqref{4.**3} are easy to be checked.
\subsubsection*{Subcase III.III}($ \alphar \geq - \betah $):
\begin{gather*}
C_1(\dmh) = - \betah, \\
C_1^+(\dml) = 0, \quad \Delta_1(\dml) = \alphal,\\
C-1^\pm(\dmr) = - \betah, \quad \Delta_1(\dml) = - \gammah + \alphar - \betah
\end{gather*}
and \eqref{4.*1}--\eqref{4.**3} can be easily checked.

\subsection*{Case IV}($\alphah = \alphal + \alphar > 0, \betah > 0, \gammah < 0$):
In this case,
\begin{gather*}
E_1^+(\dmh) = \alphah + \betah, \quad E_1^-(\dmh) = 0, \\
L_1^+(\dmh) = 0, \quad L_1^-(\dmh) = \gammah, \\
C_1(\dmh) = 0, \quad \Delta_1(\dmh) = - \gammah + \alphah + \betah,
\end{gather*}
and 
\begin{gather*}
E_1^+(\dml) = \alphal, \quad L_1^+(\dml) = 0, \\
E_1^-(\dml) = 0, \quad S_1(\dml) = 0, \\
C_1^\pm(\dml) = 0, \quad \Delta_1(\dml) = \alphal, \\
E_1^+(\dmr) = \alphar+\betah, \quad L_1^+(\dmr) = 0, \\
E_1^-(\dmr) = 0, \quad S_1(\dmr) = \gammah, \\
C_1^\pm(\dmr) = 0, \quad \Delta_1(\dmr) = - \gammah + \alphar + \betah.
\end{gather*}
Then it is easy to check \eqref{4.*1}--\eqref{4.**3}.\\[2pt]

For Cases V--VIII, one needs to be careful to assign the continuation line segment to ensure \eqref{4.*1}--\eqref{4.**3} as well as the requirement
that any two approximate $1$-characteristics cannot cross each other.

\subsection*{Case V}($\alphah < 0, \betah < 0, \gammah > 0$):
In this case
\begin{gather*}
E_1^+(\dmh) = 0, \quad E_1^-(\dmh) = \alphah + \betah, \\
L_1^+(\dmh) = \gammah, \quad L_1^-(\dmh) = 0, \\
C_1(\dmh) = 0, \quad \Delta_1(\dmh ) = \gammah + |\alphah| + |\betah|.
\end{gather*}
One may choose the continuation line segment as the leftmost characteristic of $\gammah$, then
\begin{gather*}
E_1^\pm (\dml) = 0, \quad L_1^+(\dml) = 0, \quad S_1(\dml) = 0,\\
C_1^\pm (\dml) = 0, \quad \Delta_1(\dml) = 0
\end{gather*}
and 
\begin{gather*}
E_1^+(\dmr) = 0, \quad E_1^-(\dmr) = \betah, \\
L_1^+(\dmr) = \gammah, \quad S_1(\dmr) = 0, \\
C_1^\pm(\dmr) = 0, \quad \Delta_1(\dmr) = \gammah + |\betah|.
\end{gather*}
Now it is direct to check \eqref{4.*1}--\eqref{4.**3}.
Moreover, combining the selection in this case with its symmetric counterpart, one can check that any two approximate $1$-characteristics would not cross each other in this case.

\subsection*{Case VI}($\alphah < 0, \betah>0, \gammah>0$):
In this case
\begin{gather*}
E_1^+(\dmh) = \betah, \quad E_1^-(\dmh) = \alphah, \\
L_1^+(\dmh) = \gammah, \quad L_1^-(\dmh) = 0, \\
C_1(\dmh) = \min \{ - \alphah, \betah \}, \quad \Delta_1(\dmh) = \big| \gammah + |\alphah| - \betah \big|.
\end{gather*}
One may choose the continuation line segment as the leftmost characteristic line of $\gammah$, then
\begin{gather*}
E_1^\pm(\dml) = 0, \quad L_1^+(\dml) = S_1(\dml) = 0, \\
C_1^\pm(\dml) = 0, \quad \Delta_1(\dml) = 0,
\end{gather*}
and
\begin{gather*}
E_1^+(\dmr) = \betah, \quad E_1^-(\dmr) = 0, \\
L_1^+(\dmr) = \gammah, \quad S_1(\dmr) = 0, \\
C_1^+(\dmr) = C_1(\dmh), \quad C_1^-(\dmr) = 0, \quad \Delta_1(\dmr) = \Delta_1(\dmh).
\end{gather*}
So that \eqref{4.*1}--\eqref{4.**3} hold.

\subsection*{Case VII}($\alphah = \alphal + \alphar > 0, \betah < 0, \gammah > 0$):
In this case
\begin{gather*}
E_1^+(\dmh) = \alphah, \quad E_1^-(\dmh) = \betah, \\
L_1^+(\dmh) = \gammah, \quad L_1^-(\dmh) = 0, \\
C_1(\dmh) = \min \{ \alphah, -\betah \}, \quad \Delta_1(\dmh) = \big| \gammah - \alphah + |\betah| \big|
\end{gather*}
and 
\begin{gather*}
E_1^+(\dml) = \alphal, \quad E_1^-(\dml) = 0,\\
E_1^+(\dmr) = \alphar, \quad E_1^-(\dmr) = \betah.
\end{gather*}
\subsubsection*{Subcase VII.I}($|\betah| > \alphah$):
One my choose the continuation line segment as the leftmost characteristic line of $\gammah$, then
\begin{gather*}
L_1^+(\dml) = S_1(\dml) = 0, \quad C_1^+(\dml) = \alphal, \\
C_1^-(\dml) = 0, \quad \Delta_1(\dml) = 0, \\
\end{gather*}
and
\begin{gather*}
L_1^+(\dmr) = \gammah, \quad S_1(\dmr) = 0, \\
C_1^+(\dmr) = \alphar, \quad C_1^-(\dmr) = \alphah, \quad \Delta_1(\dmr) = \Delta_1(\dmh).
\end{gather*}
Due to the fact $C_1(\dmh) = \alphah, \Delta_1(\dmh) > \gammah$ in this situation, it is easy to check \eqref{4.*1}--\eqref{4.**3}.
\subsubsection*{Subcase VII.II}($\alphah \geq |\betah| > \alphar$):
The continuation line segment can be chosen as the rightmost characteristic line of $\gammah$ if $\gammah < \alphah - |\betah|$ and as the characteristic line that the leaving $1$-rarefaction wave on its left is of strength $\alphah - |\betah| $ if $\gammah \geq \alphah - |\betah|$. 
Then
\begin{gather*}
L_1^+(\dml) = \min\{ \gammah, \alphah - |\betah| \}, \quad S_1(\dml) = 0, \\
C_1^+(\dml) = |\betah| - \alphar, \quad C_1^-(\dml) = 0, \\
\Delta_1(\dml) = \max \{ 0, \alphah - |\betah| - \gammah  \}
\end{gather*}
and 
\begin{gather*}
L_1^+(\dmr) = \max \{ \gammah - \alphah + |\betah|, 0 \}, \quad S_1(\dmr) = 0, \\
C_1^+(\dmr) = \alphar, \quad C_1^-(\dmr) = |\betah|, \\
\Delta_1(\dmr) = \max \{ 0, \gammah - \alphah + |\betah| \}
\end{gather*}
Noting that in this situation, $C_1(\dmh) = |\betah|$, one can show \eqref{4.*1}--\eqref{4.**3} easily.
\subsubsection*{Subcase VII.III}($\alphar \geq |\betah|$):
One may choose the continuation line segment as the right most characteristic line of $\gammah$ if $\gammah \leq \alphal$ and the characteristic line that the leaving $1$-rarefaction wave on its left is of strength $\alphal$ if $\gammah > \alphal$. 
Then
\begin{gather*}
L_1^+(\dml) = \min \{ \gammah, \alphal \}, \quad S_1(\dml) = 0, \\
C_1^\pm (\dml) = 0, \quad \Delta_1(\dml) = \max \{ 0, \alphal - \gammah \}
\end{gather*}
and 
\begin{gather*}
L_1^+(\dmr) = \max \{ \gammah - \alphal,0 \}, \quad S_1(\dmr) = 0, \\
C_1^\pm (\dmr) = |\betah|, \quad \Delta_1(\dmr) = \Delta_1(\dmh) - \Delta_1(\dml). 
\end{gather*}
Noting that in this situation, $C_1(\dmh) = |\betah|$ and when $\gammah \leq \alphal$, it holds that $\Delta_1(\dmr) = \alphar - |\betah|$,  so \eqref{4.*1}--\eqref{4.**3} are valid.

Moreover, combining Cases VI--VII and their symmetric counterparts, 
one can show that any two approximate $1$-characteristics would not cross in this case.

\subsection*{Case VIII}($\alphah = \alphal + \alphar > 0, \betah > 0, \gammah > 0$):
In this case
\begin{gather*}
E_1^+(\dmh) = \alphah + \betah, \quad E_1^-(\dmh) = 0, \\
L_1^+(\dmh) = \gammah, \quad L_1^-(\dmh) = 0, \\
C_1(\dmh) = 0, \quad  \Delta_1(\dmh) = |\gammah - \alphah - \betah|
\end{gather*}
and
\begin{gather*}
E_1^+(\dml) = \alphal, \quad E_1^-(\dml)=0, \\
S_1(\dml) = 0, \quad  C_1^\pm(\dml) = 0, \\
E_1^+(\dmr) = \alphar + \betah, \quad E_1^-(\dmr) = 0, \\
S_1(\dmr) = 0, \quad C_1^\pm(\dmr) = 0.
\end{gather*}
When $\Delta_1(\dmh)$ strengthens the $1$-rarefaction waves, one may locate its effect at the center part of $\gammah$, and when it weakens them, one may divide it into two parts whose effects on $\alphah$ and $\betah$ are proportional to its original strength. The continuation line segment can be chosen accordingly as what follows.
\subsubsection*{Subcase VIII.I}($ \gammah > \alphal + \frac{\alphal}{\alphah} \betah$):
The continuation line segment is chosen as the $1$-characteristics line that the strength of $1$-rarefaction waves on its left is $\alphal$. 
Then
\begin{gather*}
L_1^+(\dml) = \alphal, \quad \Delta_1(\dml) = 0, \\
L_1^+(\dmr) = \gammah - \alphal, \quad \Delta_1(\dmr) = \Delta_1(\dmh).
\end{gather*}
\subsubsection*{Subcase VIII.II}($ \gammah \leq \alphal + \frac{\alphal}{\alphah} \betah$):
The continuation line segment is chosen as the $1$-characteristic line that the strength of the leaving $1$-rarefaction wave on its left is $\frac{\alphah}{\alphah+\betah} \gammah$, and
\begin{gather*}
L_1^+(\dml) = \frac{\alphah}{\alphah + \betah} \gammah, \quad \Delta_1(\dml) = \alphal - \frac{\alphah}{\alphah + \betah} \gammah, \\
L_1^+(\dmr) = \frac{\betah}{\alphah + \betah } \gammah, \quad \Delta_1(\dmr) = \alphar + \betah - \frac{\betah}{\alphah + \betah} \gammah.
\end{gather*}
Now one can check directly that \eqref{4.*1}--\eqref{4.**3} hold and any two approximate $1$-characteristics would not cross each other in this case.

\bibliographystyle{plain}

\bibliography{WNGD}

\begin{thebibliography}{10}

\bibitem{Bia2010Linfty}
Stefano Bianchini, Rinaldo~M. Colombo, and Francesca Monti.
\newblock $2 \times 2$ systems of conservation laws with {$L^\infty$} data.
\newblock {\em Journal of Differential Equations}, 249(12):3466--3488, 2010.

\bibitem{Bressan_book_2000}
Alberto Bressan.
\newblock {\em Hyperbolic Systems of Conservation Laws: the One Dimensional
  Cauchy Problem}, volume~20 of {\em Oxford Lecture Series in Mathematics and
  its Applications}.
\newblock Oxford University Press, Oxford, New York, 2000.

\bibitem{Chen_Da_van_vis_1995}
Gui-Qiang Chen and Constantine~M. Dafermos.
\newblock The vanishing viscosity method in one-dimensional thermoelasticity.
\newblock {\em Transactions of the American Mathematical Society},
  347(2):531--541, 1995.

\bibitem{chen2006validity}
Gui-Qiang Chen, St{\'e}phane Junca, and Michel Rascle.
\newblock Validity of nonlinear geometric optics for entropy solutions of
  multidimensional scalar conservation laws.
\newblock {\em Journal of Differential Equations}, 222(2):439--475, 2006.

\bibitem{chen2013weakly}
Gui-Qiang Chen, Wei Xiang, and Yongqian Zhang.
\newblock Weakly nonlinear geometric optics for hyperbolic systems of
  conservation laws.
\newblock {\em Communications in Partial Differential Equations},
  38(11):1936--1970, 2013.

\bibitem{cheverry1996modulation}
Christophe Cheverry.
\newblock The modulation equations of nonlinear geometric optics.
\newblock {\em Communications in Partial Differential Equations},
  21(7-8):1119--1140, 1996.

\bibitem{cheverry1997justification}
Christophe Cheverry.
\newblock Justification de l'optique g{\'e}om{\'e}trique non lin{\'e}aire pour
  un syst{\`e}me de lois de conservation.
\newblock {\em Duke Mathematical Journal}, 87(2):213--263, 1997.

\bibitem{Dafermos_asymp_periodic_1995}
Constantine~M. Dafermos.
\newblock Large time behavior of periodic solutions of hyperbolic systems of
  conservation laws.
\newblock {\em Journal of Differential Equations}, 121:183--202, 1995.

\bibitem{Dafermos_book_2005}
Constantine~M. Dafermos.
\newblock {\em Hyperbolic Conservation Laws in Continuum Physics}, volume 325
  of {\em Grundlehren der Mathematischen Wissenschaften}.
\newblock Springer, Heidelberg, Dordrecht, London and New York, 3rd edition,
  2010.

\bibitem{Dafermos_Hsiao}
Constantine~M. Dafermos and Ling Hsiao.
\newblock Hyperbolic systems of balance laws with inhomogeneity and
  dissipation.
\newblock {\em Indiana University Mathematics Journal}, 31:471--491, 1982.

\bibitem{Diperna1985validity}
Ronald~J. DiPerna and Andrew Majda.
\newblock The validity of nonlinear geometric optics for weak solutions of
  conservation laws.
\newblock {\em Communications in Mathematical Physics}, 98(3):313--347, 1985.

\bibitem{Glimm_basic_1965}
James Glimm.
\newblock Solutions in the large for nonlinear hyperbolic systems of equations.
\newblock {\em Communications on Pure and Applied Mathematics}, 18:697--715,
  1965.

\bibitem{Glimm_Lax_1970}
James Glimm and Peter~D. Lax.
\newblock Decay of solutions of systems of nonlinear hyperbolic conservation
  laws.
\newblock {\em Memoirs of the American Mathematical Society}, 101:1--112, 1970.

\bibitem{hunter1986resonantly}
John~K. Hunter, Andrew Majda, and Rodolfo Rosales.
\newblock Resonantly interacting, weakly nonlinear hyperbolic waves.
  {II-S}everal space variables.
\newblock {\em Studies in Applied Mathematics}, 75:187--226, 1986.

\bibitem{joly1993resonant}
Jean-Luc Joly, Guy M{\'e}tivier, and Jeffrey Rauch.
\newblock Resonant one dimensional nonlinear geometric optics.
\newblock {\em Journal of Functional Analysis}, 114(1):106--231, 1993.

\bibitem{Kruzkov_1970}
Stanislav~Nikolaevich Kruzhkov.
\newblock First order quasilinear equations with several independent variables.
\newblock {\em Matematicheskii Sbornik}, 81(123):228--255, 1970.

\bibitem{LeFloch_Xin_class}
Philippe~G. LeFloch and Zhouping Xin.
\newblock Formation of singularities in periodic solutions to gas dynamics
  equations.
\newblock unpublished, 1993.

\bibitem{Li_class_1996}
Ta-Tsien Li and De-Xing Kong.
\newblock Blow up of periodic solutions to quasilinear hyperbolic systems.
\newblock {\em Nonlinear Analysis: Theory, Methods \& Applications},
  26(11):1779--1789, 1996.

\bibitem{Majda_1984}
Andrew Majda and Rodolfo Rosales.
\newblock Resonantly interacting weakly nonlinear hyperbolic waves. {I-A}
  single space variable.
\newblock {\em Studies in Applied Mathematics}, 71:149--179, 1984.

\bibitem{MRS1988canonical}
Andrew Majda, Rodolfo Rosales, and Maria Schonbek.
\newblock A canonical system of integrodifferential equations arising in
  resonant nonlinear acoustics.
\newblock {\em Studies in Applied Mathematics}, 79(3):205--262, 1988.

\bibitem{Pego_WNLO__1988}
Robert~L. Pego.
\newblock Some explicit resonating waves in weakly nonlinear gas dynamics.
\newblock {\em Studies in Applied Mathematics}, 79(3):263--270, 1988.

\bibitem{Q_Xin2015}
Peng Qu and Zhouping Xin.
\newblock Long time existence of entropy solutions to the one-dimensional
  non-isentropic {E}uler equations with periodic initial data.
\newblock {\em Archive for Rational Mechanics and Analysis}, 216(1):221--259,
  2015.

\bibitem{schochet1994resonant}
Steven Schochet.
\newblock Resonant nonlinear geometric optics for weak solutions of
  conservation laws.
\newblock {\em Journal of Differential Equations}, 113(2):473--504, 1994.

\bibitem{smoller_book_1983}
Joel Smoller.
\newblock {\em Shock Waves and Reaction-Diffusion Equations}, volume 258 of
  {\em Grundlehren der Mathematischen Wissenschaften}.
\newblock Springer-Verlag, New York and Heidelberg, 2nd edition, 1983.

\bibitem{temple_Young1996large}
Blake Temple and Robin Young.
\newblock The large time stability of sound waves.
\newblock {\em Communications in Mathematical Physics}, 179(2):417--466, 1996.

\bibitem{temple_Young2009time_periodic}
Blake Temple and Robin Young.
\newblock A paradigm for time-periodic sound wave propagation in the
  compressible {E}uler equations.
\newblock {\em Methods and Applications of Analysis}, 16(3):341--364, 2009.

\bibitem{temple2010liapunov}
Blake Temple and Robin Young.
\newblock A {L}iapunov--{S}chmidt reduction for time-periodic solutions of the
  compressible {E}uler equations.
\newblock {\em Methods and Applications of Analysis}, 17(3):225--262, 2010.

\bibitem{temple2011time}
Blake Temple and Robin Young.
\newblock Time-periodic linearized solutions of the compressible {E}uler
  equations and a problem of small divisors.
\newblock {\em SIAM Journal on Mathematical Analysis}, 43(1):1--49, 2011.

\bibitem{temple2015nash}
Blake Temple and Robin Young.
\newblock A {N}ash--{M}oser framework for finding periodic solutions of the
  compressible {E}uler equations.
\newblock {\em Journal of Scientific Computing}, 64(3):761--772, 2015.

\bibitem{Xiao_class}
Jingjing Xiao.
\newblock Some topics on hyperbolic conservation laws.
\newblock Master thesis, The Chinese University of Hong Kong, Hong Kong, June
  2008.

\end{thebibliography}

%
%
%
%
%
%
%
%

\end{document}